\newtheorem{theorem}{Theorem}[section]
\newtheorem{lemma}[theorem]{Lemma}
\theoremstyle{definition}
\theoremstyle{remark}
\numberwithin{equation}{section}
\newcommand{\E}{\mathds{E}}
\newcommand{\Q}{\mathds{Q}}
\renewcommand{\P}{\mathds{P}}
\newcommand{\R}{\mathds{R}}
\newcommand{\N}{\mathds{N}}
\renewcommand{\S}{\mathds{S}}
\newcommand{\B}{B^d}
\newcommand{\PP}{\mathcal{P}}
\newcommand{\PPc}{\PP_\cent}
\newcommand{\PPn}[1][n]{\PP_{#1}}
\newcommand{\PPnc}[1][n]{\PP_{#1,\cent}}
\newcommand{\PPcs}[1][n]{\PP_{\cent,\Phi}}
\newcommand{\PPncs}[1][n]{\PP_{#1,\cent,\Phi}}
\newcommand{\KK}{\mathcal{K}}
\newcommand{\KKcs}{\KK_{\cent,\Phi}}
\newcommand{\HH}{\mathcal{H}}
\newcommand{\HS}{\widetilde{\HH}}
\newcommand{\MeasPPn}[1][n]{\Theta_{#1}}
\newcommand{\MeasPPnMu}[1][n]{\mu_{#1}}
\newcommand{\MeasPPncsMu}[1][n]{\mu_{#1,\cent,\Phi}}
\newcommand{\MeasLebesgue}{\lambda_d}
\newcommand{\MeasLebesgueModif}[1]{\lambda_1^{(#1)}}
\newcommand{\MeasMultiSimple}[3]{\dint  #1^{#2} \left( \bd{#3} \right) }
\newcommand{\MeasCount}{\delta}
\newcommand{\MeasHalfSpaceMu}{\tilde{\mu}}
\newcommand{\IntMos}{\gamma^{(d)}}
\newcommand{\bd}{\boldsymbol}
\newcommand{\1}{\mathds{1}}
\newcommand{\e}{\varepsilon} 
\newcommand{\dint}{\,\mathrm{d}}
\newcommand{\cent}{\mathfrak{c}}
\newcommand{\origin}{\boldsymbol{o}}
\newcommand{\shape}{\mathfrak{s}}
\newcommand{\vect}[1]{\boldsymbol{ #1 }}
\begin{document}

\title{Cells with many facets in a \\ Poisson hyperplane tessellation}

\author{
  Gilles Bonnet\thanks{
    Institut f\"{u}r Mathematik, Universit\"{a}t Osnabr\"{u}ck, Albrechtstr. 28a, 49076 Osnabr\"{u}ck, Germany.
    Email: gilles.bonnet@uni-osnabrueck.de} \and 
  Pierre Calka\thanks{
    Laboratoire de Math\'{e}matiques Rapha\"{e}l Salem, Universit\'{e} de Rouen, Avenue de l'Universit\'{e}, BP.12, Technop\^{o}le du Madrillet, F76801 Saint-Etienne-duRouvray France.
    Email: pierre.calka@univ-rouen.fr} \and 
  Matthias Reitzner\thanks{
    Institut f\"{u}r Mathematik, Universit\"{a}t Osnabr\"{u}ck, Albrechtstr. 28a, 49076 Osnabr\"{u}ck, Germany.
    Email: matthias.reitzner@uni-osnabrueck.de}}

\date{}

\maketitle

\begin{abstract}
  Let $Z$ be the typical cell of a stationary Poisson hyperplane tessellation in $\R^d$. The distribution of the number of facets $f(Z)$ of the typical cell is investigated. It is shown, that under a {\it well-spread} condition on the directional distribution, the quantity $n^{\frac{2}{d-1}}\sqrt[n]{\P(f(Z)=n)}$ is bounded from above and from below. When $f(Z)$ is large, the isoperimetric ratio of $Z$ is bounded away from zero with high probability.
  
  These results rely on one hand on the Complementary Theorem which provides a precise decomposition of the distribution of $Z$ and on the other hand on several geometric estimates related to the approximation of polytopes by polytopes with fewer facets.
  
  From the asymptotics of the distribution of $f(Z)$, tail estimates for the so-called $\Phi$ content of $Z$ are derived as well as results on the conditional distribution of $Z$ when its $\Phi$ content is large.
  \smallskip
  \\
  \textbf{Keywords.} Poisson hyperplane tessellation, random polytopes, typical cell, directional distribution, Complementary Theorem,  D.G. Kendall's problem, shape distribution
  \smallskip
  \\
  \textbf{MSC.} 60D05, 52A22
\end{abstract}

\section{Introduction}
One of the classical models in stochastic geometry to generate a random mosaic is the construction via a Poisson hyperplane process.
A Poisson hyperplane process consists of countably many random hyperplanes in $\R^d$ chosen in such a way, that their distribution is translation invariant, the distribution of the direction of the hyperplanes follows a directional distribution $\varphi$, and the number of hyperplanes hitting an arbitrary convex set $K$ is Poisson distributed.

Such a Poisson hyperplane process tessellates $\R^d$ into countably many convex polytopes, the tiles of the mosaic.
The distribution of a tile chosen at random is the distribution of the so-called {\it typical cell} $Z$, a random polytope. 

The typical cell has been investigated intensively in the past decades, numerous papers have been dedicated to describe quantities associated with this cell, for example volume, surface area, mean width, number of facets, etc.
The expected number of facets $f(Z)$ of the typical cell and the expected volume $V_d(Z)$ are known, see e.g. the first works due to Miles \cite{Miles72,Miles73} and Matheron \cite{Matheron} as well as Chapter 10 from the seminal book of Schneider and Weil \cite{SchnWe3} and the survey \cite{Calka10}. 

But in almost all cases the distribution of these quantities is out of reach, and even good approximations are extremely difficult and unknown so far.
Our main theorem fills this gap for the number of facets of $Z$, giving precise asymptotics for the tails of the distribution.
\begin{theorem}
  \label{mainthm:UpperBoundfn}
  There exists a constant $ \Cl[univ]{upperbound2TH1}>0$, depending on $\varphi$,  such that for ${n\ge d+1}$,
  \[
    \P ( f(Z)=n)     <  \Cr{upperbound2TH1}^n \, n^{-\frac{2n}{d-1}}.
  \]
  Furthermore, there exists an integer $n_\varphi$ such that $\P(f(Z)=n)$ is either vanishing or strictly decreasing for $n \geq n_\varphi$.
\end{theorem}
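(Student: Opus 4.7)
My starting point is the Complementary Theorem announced in the abstract, which disintegrates the distribution of the typical cell $Z$ along its defining hyperplanes. Schematically, when $Z$ has exactly $n$ facets, $Z$ is the intersection of $n$ halfspaces bounded by hyperplanes $H_1,\ldots,H_n$, and no further hyperplane of the underlying Poisson process meets $Z$. This yields a representation of the form
\[
\P(f(Z)=n) \,=\, \frac{c}{n!} \int \mathds{1}\{K \text{ has exactly } n \text{ facets}\} \, e^{-\gamma(K)} \, \mathrm{d}\mu(H_1)\cdots \mathrm{d}\mu(H_n),
\]
where $K=K(H_1,\ldots,H_n)$ is the intersection of the $n$ halfspaces containing the origin, $\mu$ is the intensity measure (Lebesgue $\otimes \varphi$) of the hyperplane process, and $\gamma(K)=\mu(\{H:H\cap K\neq\emptyset\})$ is the hitting functional of $K$. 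The well-spread condition on $\varphi$ is precisely what makes $\gamma(K)$ comparable, up to constants depending only on $\varphi$, to a scale functional of $K$ such as its mean width.

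To extract the factor $n^{-2n/(d-1)}$, I would slice the integral by the value $t=\gamma(K)$. The exponential weight $e^{-t}$ concentrates mass on bounded scales, reducing the problem to estimating the $\mu^{n}$-measure of $n$-tuples yielding a polytope with $n$ facets and prescribed hitting functional $t$. The decisive geometric input, which the abstract refers to as ``approximation of polytopes by polytopes with fewer facets'', is the classical lower bound: a polytope with at most $n$ facets circumscribing a smooth convex body cannot approximate it better than $n^{-2/(d-1)}$ in the symmetric difference or Hausdorff metric. Dually, for $n$ facets to be simultaneously realized each must carve off a ``cap'' of at least this order, and accumulating the constraint over the $n$ facets produces a factor of order $n^{-2n/(d-1)}$ in the $\mu^{n}$-integral. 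Integrating out $t$ against the exponential weight then delivers the announced bound $\Cr{upperbound2TH1}^n\, n^{-2n/(d-1)}$.

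For the strict monotonicity tail, I would compare $\P(f(Z)=n+1)$ with $\P(f(Z)=n)$ through the same integral representation, associating to each configuration producing $n+1$ facets a family of configurations obtained by removing one facet. The geometric estimate above then shows that the ratio $\P(f(Z)=n+1)/\P(f(Z)=n)$ is controlled by a quantity of order $n^{-2/(d-1)}$ up to a constant depending on $\varphi$, which eventually drops below $1$ for $n\geq n_\varphi$. The dichotomy ``vanishing or strictly decreasing'' accommodates directional distributions $\varphi$ supported on too few directions for cells with $n$ facets to exist at all.

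The main obstacle will be the geometric estimate in the second paragraph. Translating a Hausdorff-type polytope approximation inequality into a clean bound on the $\mu^n$-measure of $n$-tuples of hyperplanes, uniformly in scale, demands a careful combinatorial and measure-theoretic analysis: each facet contributes an integration variable, but the requirement that every hyperplane actually produces a facet of $K$ drastically cuts down the effective measure, and it is precisely this reduction that must be quantified through the approximation bound. The well-spread assumption on $\varphi$ is essential here to exclude degenerate configurations with nearly parallel normals, where this analysis would break down.
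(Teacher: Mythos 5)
Your high-level plan correctly identifies the two main ingredients — the Complementary Theorem (which supplies the integral representation $\P(f(Z)=n)=\frac{\gamma^d}{\gamma^{(d)}}(n-d)!\int\mathds{1}\{\cdot\}\,\dint\mu_n$) and a polytope-approximation estimate controlling the effect of removing a facet — but there are two genuine problems.

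First, the well-spread hypothesis plays no role in the upper bound. You invoke it twice, claiming it makes the hitting functional $\Phi(K)$ comparable to the mean width and that it is ``essential here to exclude degenerate configurations with nearly parallel normals.'' In fact Theorem~\ref{mainthm:UpperBoundfn} holds for an arbitrary directional distribution (subject only to the standing non-degeneracy assumption that $\mathrm{supp}\,\varphi$ is not contained in a great circle). The only comparison needed is the one-sided inequality $V_1(K)\le c_\Phi\Phi(K)$ of \eqref{eq:V1RPhi}, which always holds; the reverse inequality is never used, and degenerate directional distributions are precisely what make the \emph{lower} bound fail, not the upper bound. Diagnosing when the hypothesis is needed is not cosmetic: if you built your proof around it, you would prove a weaker theorem.

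Second, the geometric heuristic you state for the main estimate runs in the wrong direction. You argue that ``for $n$ facets to be simultaneously realized each must carve off a cap of at least [order $n^{-2/(d-1)}$], and accumulating the constraint over the $n$ facets produces a factor of order $n^{-2n/(d-1)}$.'' Besides being hard to make rigorous (the indicator $\mathds{1}\{K\text{ has exactly }n\text{ facets}\}$ does not factor over the hyperplanes), the sign of the inequality is backwards. What the paper actually exploits, via Reisner--Sch{\"u}tt--Werner-type approximation (Lemma~\ref{lem:RSW3}), is an \emph{upper} bound: at least $n/4$ of the facets can be removed at a Hausdorff cost $O(\Phi(P)\,n^{-2/(d-1)})$ and a relative $\Phi$-content cost $O(n^{-(d+1)/(d-1)})$. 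This is then converted, after a symmetrization step (Lemma~\ref{lem:TechnicalPermutation}) so that the removable facet may be taken to be the last one, into a bound on the \emph{marginal} integral over the last hyperplane (Lemma~\ref{lem:IntTakeOutFacets}), yielding the recurrence $\P(f(Z)=n)\le \Cr{decreases2}c_\Phi\,n^{-2/(d-1)}\,\P(f(Z)=n-1)$ of Theorem~\ref{thm:UpperBoundReccurence}. The theorem then follows by iterating this recurrence down to a fixed $n_0$ and applying Stirling to $(n!)^{-2/(d-1)}$.

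Notice that this is \emph{exactly} the mechanism you (correctly) propose for the monotonicity tail. The paper does not use a separate direct estimate for the main bound and a recurrence for the tail; the single recurrence delivers both, since as soon as the multiplicative factor $\Cr{decreases2}c_\Phi n^{-2/(d-1)}$ drops below $1$, the sequence becomes monotone, and iterating it yields the $c^n n^{-2n/(d-1)}$ bound. If you promote your third paragraph to the main argument, drop the direct-estimate plan of the second paragraph, and remove the appeals to well-spreadness, you recover the paper's proof.
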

Here and in the sequel, $c_i$ will denote a positive constant which depends on dimension $d$.
It will be specified when it depends on $\varphi$ or another parameter.

It is clear that in general there is no matching lower bound, for example if the directions of the hyperplane process are concentrated on a finite set.
We prove that, if the directional distribution satisfies a mild condition, we have lower bounds of the same order in $n$ as the upper bound above.
In the following, we call $\varphi$ {\it well spread} if there exists a cap on the unit sphere where $\varphi$ is bounded from below by a multiple of the surface area measure.
\begin{theorem}
  \label{mainthm:LowerBoundfn}
  Assume that $\varphi$ is well spread.
  Then there exists a constant $\Cl[univ]{lowerbound2TH2} > 0$, depending on $\varphi$, such that for $n\ge d+1$,
  \[
    \P ( f(Z)=n)   >  \Cr{lowerbound2TH2}^n \,  n^{- \frac{2n}{d-1}}.
  \]
\end{theorem}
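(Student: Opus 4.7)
The starting point is the Complementary Theorem established earlier in the paper, which expresses
$$
\gamma^{(d)} \P(f(Z) = n) \;=\; \frac{1}{n!} \int \mathds{1}\{f(P) = n\}\, \mathds{1}\{\cent(P) \in [0,1]^d\}\, e^{-\gamma^{(d)}(P)} \prod_{i=1}^n d\mu(H_i),
$$
where $\mu$ is the intensity measure of the Poisson hyperplane process and $P = P(H_1, \ldots, H_n)$ is the polytope bounded by the $n$ halfspaces. The idea is to lower bound this integral by constructing an explicit family of hyperplane configurations tailored to the well-spread cap.

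First, invoke the well-spread assumption: on the cap $C \subset S^{d-1}$ one has $\varphi \geq c_\varphi \sigma$, and by evenness of $\varphi$ (which follows from the identification of a hyperplane with its opposite unit normal) the same bound holds on $-C$. All subsequent constructions restrict to hyperplane normals in $C \cup (-C)$, losing only a factor $c_\varphi^n$.

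Next, fix a centrally symmetric smooth convex body $K_\star$ whose outer normal directions cover $C \cup (-C)$, whose $\gamma^{(d)}$-content is of order one, and whose centroid is placed at the center of $[0,1]^d$. Consider configurations $H_i = H(u_i, t_i)$ with $u_i \in C \cup (-C)$ and $t_i$ lying in a window of length $\sim n^{-2/(d-1)}$ around $h_{K_\star}(u_i)$. The geometric content of the proof is to verify, for such configurations:
\begin{enumerate}
\item[(i)] $P$ is bounded, has exactly $n$ facets, and lies in a fixed compact neighbourhood of $K_\star$; the latter follows because each hyperplane is near-tangent to $K_\star$ at a distinct boundary point and therefore contributes a non-degenerate, non-redundant facet -- the higher-dimensional analogue of the elementary fact that $n$ tangent lines to a circle at distinct points cut out a circumscribed $n$-gon.
\item[(ii)] $\gamma^{(d)}(P) \leq c_1$ so that $e^{-\gamma^{(d)}(P)} \geq e^{-c_1}$, and $\cent(P) \in [0,1]^d$.
\item[(iii)] The measure of the family, integrated against $\prod d\mu(H_i)$, combined with the $1/n!$ in the Complementary Theorem and summed over the $n!$ ways to assign hyperplanes to the cap-covering slots, is at least $c^n n^{-2n/(d-1)}$.
\end{enumerate}

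The main obstacle is the quantitative analysis behind (iii): one must match the size of the allowable ``$t_i$-window'' preserving the $n$-facet structure with the entropy of $u_i$-configurations in $C \cup (-C)$, so that the per-hyperplane measure, after the $n!$ orderings and Stirling's formula, reproduces the announced exponent $2n/(d-1)$. This rests on a careful cap-covering argument together with the classical optimal approximation rate $n^{-2/(d-1)}$ for polytopes with $n$ facets approximating a smooth convex body -- the same geometric input that underlies the matching upper bound in Theorem~\ref{mainthm:UpperBoundfn} -- adapted here to the restricted support $C \cup (-C)$ of the directional distribution $\varphi$.
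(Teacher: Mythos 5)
Your plan is essentially the route the paper takes: lower-bound the Complementary Theorem integral by an explicit family of hyperplane configurations whose normals lie in $C\cup(-C)$ and whose support distances sit in a thin slab, then estimate the $\mu$-measure of that family via a cap-packing and close with Stirling. The numerology you anticipate — a $t_i$-window of width $\sim n^{-2/(d-1)}$, $u_i$-caps of radius $\sim n^{-1/(d-1)}$, per-hyperplane measure $\sim n^{-(d+1)/(d-1)}$, and the $n!$ orderings against the $1/n!$ from the disintegration — is exactly what the paper's Lemma~\ref{lem:subsets} delivers.

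There is, however, a genuine gap in your step (i). Your justification, ``each hyperplane is near-tangent to $K_\star$ at a distinct boundary point and therefore contributes a non-degenerate, non-redundant facet — the higher-dimensional analogue of the elementary fact that $n$ tangent lines to a circle at distinct points cut out a circumscribed $n$-gon,'' addresses the facet count but says nothing about boundedness, and boundedness is precisely what fails generically when the normals are forced into a small cap $C$ and its antipode $-C$. Already in the plane, tangent lines to the unit circle at points whose normals cluster near a single pair of antipodal directions bound a region that escapes to infinity sideways; one must spread the tangent points across the whole angular width of $C$ to close the polytope, and even then the resulting polytope is large (of diameter $\sim r^{-1}$, not $\sim 1$). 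The paper devotes a separate lemma (Lemma~\ref{lem:ConstructionBoundedPoly}) to exactly this: a saturated packing of caps of radius $r/12$ in $C\cup(-C)$ is constructed, and a non-trivial geometric argument shows that choosing one near-tangent hyperplane per cap already confines the intersection inside $B(\origin, 4r^{-1})$. Without this ingredient your $P$ need not be compact, the event $\{\Phi(P)<1\}\cap\{\cent(P)\in[0,1]^d\}$ is not controlled, and the lower bound on the integral collapses. Also, your choice of a general smooth $K_\star$ ``whose outer normal directions cover $C\cup(-C)$'' is both vague (every smooth strictly convex body has all of $\S^{d-1}$ as normal directions) and unnecessary — the paper simply uses the ball and rescales. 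So the heuristic is right, but the construction that ensures boundedness, which you leave implicit, is a substantial and necessary part of the proof.
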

The occurring constant will be made more explicit in Section \ref{sec:manyfac}, in particular its dependence on the directional distribution $\varphi$ of  $\eta$. 

Maybe a simple conjecture for the distribution of the number of facets $f(Z)$ of the typical cell of a Poisson hyperplane tessellation would have been the Poisson distribution.
Yet our theorem disproves this, as a more intricate conjecture we state that $f(Z)$ follows a Compound Poisson distribution.

Theorems \ref{mainthm:UpperBoundfn} and \ref{mainthm:LowerBoundfn} prove the asymptotic expansion
\[
  \ln  \P ( f(Z)=n)   =  - \frac{2}{d-1} \, n \ln n +  \Theta(n) 
\]
as $n \to \infty$ where the implicit constants in the error term $\Theta(n)$ are strictly positive.
We pose it as an open problem whether
\[
  \ln  \P ( f(Z)=n)   =  - \frac{2}{d-1} \, n \ln n +  c n + \Theta(\ln n).
\]
Support for this conjecture comes from the planar case where Calka and Hilhorst \cite{CaHi08} stated a more precise result if $\varphi$ is rotation invariant.  
In \cite{H09}, Hilhorst investigates the similar case of the typical cell of a Poisson-Voronoi tessellation and provides heuristics for getting an analogous asymptotic expansion for the probability that the typical Poisson-Voronoi cell has $n$ facets.
He obtains as a first term $-\frac{2}{d-1}n\ln(n)$, as in the present paper, but does not make the second term fully explicit (indeed, the constant $c_d$ introduced in (2.10) therein is unknown).
In particular, the approximation result provided by Lemma \ref{lem:RSW3} matches to some extent the statement (2.10) in \cite{H09}, i.e. {\it many} of the $n$ facets of $Z$ lie in an annulus with thickness of order $n^{-\frac{2}{d-1}}$ multiplied by a size functional of $Z$.
This suggests that improving Theorems \ref{mainthm:UpperBoundfn} and \ref{mainthm:LowerBoundfn} requires new ingredients and notably a substantial improvement of Lemma \ref{lem:RSW3}.

\medskip
In the following our aim is to show that cells with many facets are far away from any lower dimensional convex body. To do this we measure the distance from the ball using the isoperimetric ratio of a convex set $K$. Denote by $V_i$ the $i$-th intrinsic volume (see Section \ref{sec:Setting} for the definition). 
For any $ 1 \leq i < j \leq d $ we call $ V_j(K)^{1/j} V_i(K)^{-1/i} $ the {$(i,j)$-isoperimetric ratio} of $K$.
The isoperimetric inequality says that this ratio is maximized precisely for balls.
On the other hand when the isoperimetric ratio of $K$ vanishes, $K$  must be lower dimensional.

The next theorem shows that the isoperimetric ratio $ V_j(Z)^{1/j} V_i(Z)^{-1/i} $ of the typical cell is bounded away from zero with high probability  if the cell has many facets.
Hence cells with many facets cannot be too elongated.

\begin{theorem}
  \label{mainthm:boundsManyFacetsIsoperimeter}
  Assume that $\varphi$ is well spread and that $1 \leq i < j \leq \lceil (d-1) / 2 \rceil $.
  For any $\delta\in(0,1)$, there exist constants $\epsilon$ and $n_0$, depending on $\varphi$, $i$, $j$ and $\delta$ such that
  \[
    \P \left( \frac{V_j(Z)^{\frac 1j}}{V_i(Z)^{\frac 1i}} < \epsilon \ \Big\vert f(Z)=n\right) 
    \leq  
    \delta^n  
  \]
  for all $n \geq n_0$.
\end{theorem}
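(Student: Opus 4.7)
\noindent\textit{Proof plan.} The strategy is to bound the joint probability
\[
  p_n(\epsilon) := \P\left(f(Z)=n,\ \frac{V_j(Z)^{1/j}}{V_i(Z)^{1/i}} < \epsilon\right)
\]
from above by $a(\epsilon)^n n^{-2n/(d-1)}$ with $a(\epsilon)\to 0$ as $\epsilon\to 0$, and then to divide by a lower bound of the form $b^n n^{-2n/(d-1)}$ on $\P(f(Z)=n)$ supplied by Theorem \ref{mainthm:LowerBoundfn} (which uses the well-spread hypothesis on $\varphi$). Choosing $\epsilon$ small enough that $a(\epsilon)/b<\delta$ then yields the conditional estimate $\delta^n$, with $n_0$ selected to absorb any polynomial correction.

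First, I apply the Complementary Theorem, exactly as in the proof of Theorem \ref{mainthm:UpperBoundfn}, to express $p_n(\epsilon)$ as an integral over polytopes $K$ with exactly $n$ facets, weighted by the usual $\Phi$-content exponential and now further restricted to shapes with $V_j(K)^{1/j}<\epsilon V_i(K)^{1/i}$. Alexandrov--Fenchel-type inequalities ensure that, for a $d$-dimensional convex body, the ratio $V_j^{1/j}/V_i^{1/i}$ is bounded below in terms of the ``effective dimension'' of the body, so a small value of this ratio, together with the restriction $j\leq\lceil(d-1)/2\rceil$, forces $K$ to lie within an $\epsilon$-dependent neighborhood of an affine subspace of dimension strictly less than $d$. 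For such almost-flat polytopes, the approximation estimate of Lemma \ref{lem:RSW3} can be applied with a lower effective dimension and therefore a strictly better approximation exponent; translating this improved rate through the Complementary Theorem produces a multiplicative saving exponential in $n$ whose base tends to zero as $\epsilon \to 0$.

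The main obstacle is making this flatness gain quantitative and uniform in the size functional of $K$, so that the resulting constant $a(\epsilon)$ genuinely decays with $\epsilon$ independently of the integrated shape. This calls for a refinement of the approximation estimates underpinning Section \ref{sec:manyfac}, with the constants tracked as explicit functions of the aspect ratio, and a careful handling of the regime in which the intrinsic approximation error $n^{-2/(d-1)}$ exceeds the flatness parameter: for this, $n_0$ must be chosen large enough (essentially of order $\epsilon^{-(d-1)/2}$) that the effectively lower-dimensional approximation dominates. Once such a uniform bound is established, the division by the lower bound of Theorem \ref{mainthm:LowerBoundfn} and the choice $\epsilon=\epsilon(\varphi,i,j,\delta)$ conclude the argument.
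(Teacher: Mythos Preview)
Your overall architecture—bound the joint probability $p_n(\epsilon)$ above by $a(\epsilon)^n n^{-2n/(d-1)}$ with $a(\epsilon)\to 0$, then divide by the lower bound of Theorem~\ref{mainthm:LowerBoundfn} and choose $\epsilon$ so that $a(\epsilon)/b<\delta$—is exactly what the paper does (Section~4.1, combining Theorem~\ref{thm:boundsManyFacetsIsoperimeter} with Theorem~\ref{thm:MainLowerBound}). The divergence is in how the factor $a(\epsilon)\to 0$ is produced.

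You argue that flatness should give a \emph{better approximation exponent} via an ``effective dimension'' reading of Lemma~\ref{lem:RSW3}. The paper does not change the exponent: it keeps $k^{-2/(d-1)}$ but shows that for $(\epsilon\!:\!i,j)$-elongated bodies the \emph{constant} in the Hausdorff bound picks up an extra factor $\epsilon^{1/(2d)}$. This is a separate approximation result for elongated convex bodies (Lemma~\ref{lem:approxElong}, quoting \cite{Bonnet16}), not a consequence of Lemma~\ref{lem:RSW3}. After propagation through the argument the gain becomes $(\,c\,\epsilon^{1/(2d^4)})^n$ in Theorem~\ref{thm:boundsManyFacetsIsoperimeter}, which is what gets divided by the lower bound. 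Your later phrase ``constants tracked as explicit functions of the aspect ratio'' is closer to this than the ``better exponent'' claim; the latter would need an independent proof.

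There is also a genuine gap in the sketch. To run the facet-removal argument under the elongation hypothesis you must know that the \emph{intermediate} polytopes $P_{\sigma[k]}$ remain elongated; removing a facet can a~priori destroy the inequality $V_j^{1/j}/V_i^{1/i}<\epsilon$. The paper deals with this explicitly: Lemma~\ref{lem:ContinuityIsop} gives uniform continuity of the isoperimetric ratio, and Lemma~\ref{lem:sequenceFacets} uses it to construct, in one shot, at least $2^{-n}(n-2m)!$ permutations for which every step $P_{\sigma[k]}\to P_{\sigma[k-1]}$ satisfies the improved Hausdorff bound. This is why the elongated upper bound is proved as a direct estimate (Section~4.3) rather than as a one-step recurrence in the style of Theorem~\ref{thm:UpperBoundReccurence}. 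Your plan to simply reapply Lemma~\ref{lem:RSW3} does not supply this, and without it the iteration does not go through. (Incidentally, the threshold for $n$ that emerges is of order $\epsilon^{-2(d-2)}$, with a further $\epsilon^{-2(d-1)}$ needed to absorb the prefactor, not $\epsilon^{-(d-1)/2}$.)
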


\bigskip
To describe the distribution of the typical cell $Z$ we need the notion of the \textit{$\Phi$-content} of a convex body $K$. 
The $\Phi$-content measures in a certain sense the size of the convex set depending on the directional distribution $\varphi$ of the hyperplane tessellation.
It is given by
\[
  \Phi(K)
  := \int\limits_{\S^{d-1}} h(K, \vect{u} )\dint \varphi(\vect{u} ) ,
\]
where $h(K, \vect{u} ):=\max\{\langle \vect{x}, \vect{u} \rangle\mid \vect{x}\in K\}$ is the value of the {support function} of $K$ at $ \vect{u} $.
That the $\Phi$ content is an important quantity of a  Poisson hyperplane process  is immediately clear since the number of hyperplanes hitting an arbitrary convex set $K$ is Poisson distributed with parameter $\gamma \Phi(K)$.
The real number $\gamma>0$ is the intensity of the hyperplane process.
In the important case where $\varphi$ is a constant and hence the directional distribution is the uniform distribution on $\S^{d-1}$, the $\Phi$-content of a convex set $K$ is just the well known mean width $V_1(K)$ of $K$ up to a constant.
For more information we refer to Section \ref{sec:Setting}.

Again the distribution of $\Phi(Z)$ is unknown, and in this case even the expectation is out of reach.
Here we succeed in computing the tail behaviour of the size-functional $\Phi (Z)$.
\begin{theorem}
  \label{mainthm:DistribPhi}
  There exist constants $\Cl[univ]{const:UpperBoundPhiContent} > \Cl[univ]{const:LowerBoundPhiContent} > 0$ and  $\Cl[univ]{const:LowerA}>0$ depending on $\varphi$, such that the following holds.
  For $a>0$, we have
  \[
    \P ( \Phi(Z) > a ) 
    < \exp \left\{ - \gamma a  + \Cr{const:UpperBoundPhiContent} ( \gamma a ) ^{\frac{d-1}{d+1}} \right\} . 
  \]
  Assume that $\varphi$ is well spread.
  Then, for $ a > \gamma^{-1} \Cr{const:LowerA}$, we also have
  \[
    \P ( \Phi(Z) > a )
    > \exp \left\{ - \gamma a 
    + \Cr{const:LowerBoundPhiContent} ( \gamma a )^{\frac{d-1}{d+1}} \right\} .
  \]
\end{theorem}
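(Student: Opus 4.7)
The plan starts from the Complementary Theorem announced in the introduction. It represents the intensity of the typical cell as
\[
  \gamma_d\,\E\,g(Z)\;=\;\sum_{n\ge d+1}\frac{1}{n!}\int g(P(H_1,\ldots,H_n))\,\mathbf{1}\{\text{valid facet configuration}\}\,e^{-\gamma\Phi(P)}\,d\mu^n(H_1,\ldots,H_n),
\]
where $\mu$ is the intensity measure of the hyperplane process, $\gamma_d$ the intensity of cells, and $P(H_1,\ldots,H_n)$ the polytope bounded by the $n$ hyperplanes. The exponential weight is exactly the probability that no further hyperplane of the process meets $P$. Applied with $g=\mathbf{1}\{\Phi>a\}$ it is bounded by $e^{-\gamma a}$ everywhere on the support of the integrand, which extracts the leading term $-\gamma a$ in both directions; the correction of order $(\gamma a)^{(d-1)/(d+1)}$ then emerges from a saddle-point optimization over the number of facets $n$.

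For the upper bound, for each $n$ rescale the configuration so that the typical linear size of $P$ is of order one; this produces a factor $(\gamma a)^n$ from the $n$-fold integration and reduces the remaining integral to the purely geometric quantity already controlled in the proof of Theorem \ref{mainthm:UpperBoundfn}. The outcome is an estimate of the form
\[
  \P(\Phi(Z)>a)\;\le\;\exp(-\gamma a)\sum_{n\ge d+1}\frac{(c\gamma a)^n}{n!}\,n^{-\frac{2n}{d-1}},
\]
whose summand has logarithm $n\ln(\gamma a)-\frac{d+1}{d-1}\,n\ln n+O(n)$, maximized at $n^\star\asymp(\gamma a)^{(d-1)/(d+1)}$ with maximum value of order $(\gamma a)^{(d-1)/(d+1)}$. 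Laplace's method then delivers the upper bound.

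For the lower bound one exploits the well-spread assumption constructively. Fix $n\asymp(\gamma a)^{(d-1)/(d+1)}$ and build a deterministic polytope $P_0$ close to a Euclidean ball of radius $r$ with $\Phi(rB^d)\approx a$, with $n$ facets whose outer normals lie in the cap on which $\varphi$ is bounded below by a multiple of the surface area measure, and with $\Phi(P_0)$ slightly exceeding $a$. Classical polytopal approximation of the ball allows us to choose $P_0$ with facet offsets of scale $\e\asymp a\,n^{-2/(d-1)}$. Restricting the Complementary Theorem integral to $n$-tuples $(H_1,\ldots,H_n)$ in which $H_i$ lies in a thin neighbourhood of the $i$-th facet hyperplane of $P_0$ yields a lower bound of the form
\[
  \P(\Phi(Z)>a)\;\ge\;\frac{c^n}{n!}(\gamma\e)^n\exp(-\gamma\Phi(P_0)),
\]
and the same saddle-point evaluation as above recovers the announced correction.

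The principal obstacle, as I see it, is the upper bound: obtaining the correct exponent $(d-1)/(d+1)$ demands a size-sensitive form of the approximation estimate that underlies Theorem \ref{mainthm:UpperBoundfn}, tracking the dependence of the constant in front of $n^{-2n/(d-1)}$ on the scale $\gamma a$ of the cell. It is crucial that the combinatorial factor $1/n!$ coming from the ordered tuples in the Complementary Theorem be retained in the estimate; a naive multiplication of $\P(f(Z)=n)\le \Cr{upperbound2TH1}^n n^{-2n/(d-1)}$ by $e^{-\gamma a}$, ignoring this factor, would produce the worse exponent $(d-1)/2$. The lower bound is technically simpler but requires a careful quantitative use of the well-spread hypothesis when bounding from below the intensity measure of the facet neighbourhoods.
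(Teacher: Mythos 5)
Your proposal follows essentially the same route as the paper's proof. Both rest on (i) the Complementary Theorem's exact factorization of the conditional law of $\Phi(Z)$ given $f(Z)=n$ as a $\Gamma_{\gamma,n-d}$ distribution, which is precisely where the crucial $(\gamma a)^n/n!$ factor comes from; (ii) the bounds $\P(f(Z)=n)\asymp c^{\pm n}n^{-2n/(d-1)}$ from Theorems \ref{mainthm:UpperBoundfn} and \ref{mainthm:LowerBoundfn}; and (iii) a saddle-point optimization over $n$. The paper packages step (iii) as an analytic lemma on series $\sum_n x^n/(n!)^\alpha$ (Lemma \ref{lem:upperboundExpModif}) and passes through tail sums $r_n=\sum_{k\geq n}q_k$ (Lemma \ref{lem:SumRl}), which is algebraically tidier but not a different idea; your Laplace-method description and your single-$n$ lower bound recover the same asymptotics.

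One remark on your stated ``principal obstacle'': you worry that the upper bound needs a \emph{size-sensitive} refinement of Theorem \ref{mainthm:UpperBoundfn}, tracking how the constant in front of $n^{-2n/(d-1)}$ depends on the scale $\gamma a$. This worry is unfounded: because the Complementary Theorem gives exact conditional independence of $\Phi(Z)$ and $\shape(Z)$ given $f(Z)=n$, with $\Phi(Z)$ Gamma-distributed, there is no coupling between the size and shape to track. One simply writes $\P(\Phi(Z)>a)=\sum_n q_n\,\P(\Gamma_{\gamma,n-d}>a)$, substitutes the uniform bound on $q_n$, and the $1/n!$ arrives for free from the Gamma tail $\P(\Gamma_{\gamma,n-d}>a)=e^{-\gamma a}\sum_{m\le n-d-1}(\gamma a)^m/m!$. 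Your instinct that the $1/n!$ must be retained — and that discarding it would degrade the exponent to $(d-1)/2$ — is exactly right; the point is only that this factor is already supplied by the Complementary Theorem without any refinement of the facet-number estimate.
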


Again we can use our bounds on the distribution of the $\Phi$-content of the typical cell to show that big cells are not too elongated.
\begin{theorem}
  \label{mainthm:FlatBigPhi}
  Assume that $\varphi$ is well spread and that $1 \leq i < j \leq \lceil (d-1)/2 \rceil$.
  Then for any  $ \epsilon > 0 $ sufficiently small  we have 
  \[
    \lim_{a \to \infty }
    \P \left( \frac{V_j(Z)^{\frac 1j}}{V_i(Z)^{\frac 1i}} < \epsilon \mid \Phi(Z)>a \right)
    =0 .
  \]
\end{theorem}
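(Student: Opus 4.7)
Abbreviate $F_\epsilon := \{V_j(Z)^{1/j}/V_i(Z)^{1/i} < \epsilon\}$ and $E_a := \{\Phi(Z) > a\}$. My plan is to combine Theorem~\ref{mainthm:boundsManyFacetsIsoperimeter} with Theorem~\ref{mainthm:DistribPhi} by stratifying according to the facet count $f(Z)$, the intermediate claim being that $f(Z) \to \infty$ in probability when conditioned on $E_a$.

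The starting point is the decomposition
\[
  \P(F_\epsilon \mid E_a) \;\leq\; \P(f(Z) < n_0 \mid E_a) \;+\; \E\!\left[\, \delta^{f(Z)} \1_{f(Z) \geq n_0} \,\big\vert\, E_a \,\right],
\]
where $(\delta, n_0) = (\delta(\epsilon), n_0(\epsilon))$ are the parameters furnished by Theorem~\ref{mainthm:boundsManyFacetsIsoperimeter}. This decomposition uses the conditional version
\[
  \P(F_\epsilon \mid f(Z) = n, \, E_a) \;\leq\; \delta^n \qquad (n \geq n_0)
\]
of Theorem~\ref{mainthm:boundsManyFacetsIsoperimeter}, which I expect to obtain by revisiting the proof of that theorem: its argument is based on the Complementary Theorem and on approximation estimates for polytopes with few facets, both of which should remain valid after inserting the indicator $\1_{E_a}$, since the constraint $\Phi(Z) > a$ is compatible with the underlying shape decomposition at each fixed $n$.

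The heart of the argument is the intermediate claim $\P(f(Z) \leq M \mid E_a) \to 0$ for every fixed $M$. To establish this I would produce a facet-stratified upper bound of the form
\[
  \P(\{f(Z) = n\} \cap E_a) \;\leq\; C_n(a)\, e^{-\gamma a},
\]
along the same lines as the proof of the upper bound in Theorem~\ref{mainthm:DistribPhi}, with $C_n(a)$ growing at most polynomially in $a$ for each fixed $n$. Summing over $n \leq M$ and comparing with the lower bound $\P(E_a) \geq \exp\{-\gamma a + \Cr{const:LowerBoundPhiContent}(\gamma a)^{(d-1)/(d+1)}\}$ of Theorem~\ref{mainthm:DistribPhi} yields a ratio of order $\mathrm{poly}(a)\cdot \exp\{-\Cr{const:LowerBoundPhiContent}(\gamma a)^{(d-1)/(d+1)}\}$, which vanishes as $a\to\infty$. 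Heuristically, the subexponential correction $(\gamma a)^{(d-1)/(d+1)}$ captures the combinatorial gain from having many facets, a gain unavailable to cells with only boundedly many.

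With these pieces in place, the first term of the decomposition vanishes by the intermediate claim, and the second term vanishes by bounded convergence (the integrand being bounded by $\delta^{n_0}$ and tending to $0$ in probability on $E_a$, since $f(Z) \mid E_a \to \infty$). The main obstacle will be controlling the prefactors $C_n(a)$ in the facet-stratified upper bound tightly enough to win against the subexponential correction in Theorem~\ref{mainthm:DistribPhi} uniformly over $n \leq M$; the conditional version of Theorem~\ref{mainthm:boundsManyFacetsIsoperimeter} is the secondary technical point to verify.
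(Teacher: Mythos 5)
Your proposal is correct, and it takes a genuinely different route from the paper's. The paper proves the quantitative estimate (Theorem~\ref{thm:boundPhiFlat})
\[
\P \left( \Phi(Z) > a ,\ V_j(Z)^{1/j} V_i(Z)^{-1/i} < \epsilon \right)
\leq \exp \big( -\gamma a + \Cr{lemmaelong2}\, \epsilon^{1/(6d^4)} (\gamma a)^{(d-1)/(d+1)} \big)
\]
via the power-series representation of Lemma~\ref{eq:SumRlShape}, the bounds on the $\epsilon$-restricted tails $r_n^\epsilon$ in Lemma~\ref{lemma:boundsManyFacetsIsoperimeter}, and the analytic estimate of Lemma~\ref{lem:upperboundExpModif}, and then simply divides by the lower bound of Theorem~\ref{mainthm:DistribPhi} to obtain an explicit exponentially decaying rate. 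You instead stratify by facet count: you first show that $f(Z) \to \infty$ in probability conditionally on $\Phi(Z)>a$ (because each fixed $n$ contributes only $q_n\, e^{-\gamma a}\sum_{m\le n-d-1}(\gamma a)^m/m!$, polynomial in $a$ against the subexponential lower bound for $\P(\Phi(Z)>a)$), and then conclude via a soft bounded-convergence argument from the conditional bound $\P(F_\epsilon \mid f(Z)=n,\, \Phi(Z)>a)\leq\delta^n$. One small simplification you can make: you do not need to ``revisit the proof'' of Theorem~\ref{mainthm:boundsManyFacetsIsoperimeter} to get that conditional version — since the isoperimetric ratio is translation and scale invariant it is a function of $\shape(Z)$ alone, so the Complementary Theorem's independence of $\shape(Z)$ and $\Phi(Z)$ given $f(Z)=n$ immediately gives $\P(F_\epsilon \mid f(Z)=n,\, \Phi(Z)>a)=\P(F_\epsilon \mid f(Z)=n)$. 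The trade-off is clear: the paper's approach yields a concrete rate $\exp\big( -c\,(\gamma a)^{(d-1)/(d+1)}\big)$, while yours is shorter and avoids the sub-exponential series machinery but only gives convergence to zero.
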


It is a long standing general conjecture that extremal cells of Poisson hyperplane mosaics converge to a limit shape.
This question is known as  Kendall's problem and has attained great interest with a large number of important contributions, see \cite{Kov97} \cite{HSR1} \cite{HS7} \cite{HS4} \cite{HS5} and the surveys \cite{Calka10} \cite{Hugsurv-rdmosaic} \cite{HugReisurv} \cite{Calka13}.
For a precise definition of the {\it shape} $\shape (Z)$, of the cell $Z$ we refer to Section \ref{sec:Setting}.
It turned out that many size functionals allow positive solutions of Kendall's problem, but the first intrinsic volume, the $\Phi$-content and the number of facets resisted all attemps so far.
In the contrary, Hug and Schneider \cite[Thm. 4]{HS7} gave an example where the shape of the cell containing the origin, i.e. the zero cell, under the condition that is has a big $\Phi$-content does \textit{not} concentrate.

Theorems \ref{mainthm:boundsManyFacetsIsoperimeter} and \ref{mainthm:FlatBigPhi} are first attemps to close the existing gaps.
They show that the shape $\shape (Z)$ of the typical cell cannot be too elongated if either the number of facets or the $\Phi$-content is large.
At a first glance they seem to be in conflict with the example given by Hug and Schneider.
Yet for their example they used a measure $\varphi$ which is concentrated on finitely many points and thus not well spread.
Next we prove that their theorem holds when the zero cell is replaced by the typical cell.
\begin{theorem}\label{mainthm:nmaxshape}
  Assume $\varphi$ is concentrated on a finite number of points, $f(Z) \leq n_{\max}$ with probability one.
  Then there is a limiting shape distribution,
  \[
    \lim_{a \to \infty}
    \P(\shape (Z) \in S | \Phi(Z)\geq a) =
    \P(\shape(Z) \in S | f(Z)=n_{\max}) .
  \]
\end{theorem}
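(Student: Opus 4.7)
The plan is to exploit the finite-support assumption in order to decompose the distribution of the typical cell $Z$ into finitely many combinatorial facet types and then apply the Complementary Theorem type-by-type. Write $\varphi = \sum_{i=1}^k \varphi_i \delta_{\vect{u}_i}$. For every admissible subset $F \subseteq \{1,\ldots,k\}$ — meaning $|F|\ge d+1$ and $\{\vect{u}_i:i\in F\}$ positively spans $\R^d$ — set
\[ Z_F(\vect t) = \bigcap_{i\in F} \{\vect x \in \R^d : \langle \vect x,\vect{u}_i\rangle \le t_i\}, \qquad \vect t = (t_i)_{i\in F}. \]
The a.s.\ bound $f(Z)\le n_{\max}$ makes the collection of admissible $F$ finite and yields $n_{\max}=\max|F|$. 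I would then split
\[ \P(\shape(Z)\in S,\,\Phi(Z)\ge a) = \sum_F \P(\shape(Z)\in S,\,\Phi(Z)\ge a,\,\text{type}(Z)=F). \]

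For each type $F$, the Complementary Theorem (as used throughout the paper) provides an integral representation of the corresponding probability over the open set $U_F\subset\R^F$ of admissible $\vect t$, modulo the $d$-dimensional translation action $t_i\mapsto t_i+\langle\vect x,\vect{u}_i\rangle$, with integrand proportional to $J_F(\vect t)\,e^{-\gamma\Phi(Z_F(\vect t))}\,\dint\vect t$, where $J_F$ is a translation-invariant, positively homogeneous density arising from the intensity factors and the Jacobian of the parametrisation. Polar coordinates $\vect t=r\vect s$ with $\Phi(Z_F(\vect s))=1$ on the $(|F|-d)$-dimensional quotient separate the integrand into a radial factor $r^{\beta_F}e^{-\gamma r}\,\dint r$, with exponent $\beta_F$ depending on the homogeneity degree of $J_F$ and on $|F|$, and an angular factor supported on a compact shape manifold $\Sigma_F$ of dimension $|F|-d-1$. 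Consequently, conditional on $\{\text{type}(Z)=F\}$, the shape $\shape(Z)$ and the content $\Phi(Z)$ are \emph{independent}, and $\Phi(Z)$ is Gamma-distributed with shape parameter $\beta_F+1$.

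Laplace-type asymptotics then give $\P(\Phi(Z)\ge a,\,\text{type}=F)\sim c_F\,a^{\beta_F}e^{-\gamma a}$ as $a\to\infty$, so only the types with maximal exponent $\beta_F$ contribute in the limit. A homogeneity count identifies this maximum with $|F|=n_{\max}$: each additional facet adds one more integration variable $t_i$ and raises the polynomial degree by one. For each such dominant type $F$, the independence proved above yields
\[ \P(\shape(Z)\in S \mid \Phi(Z)\ge a,\,\text{type}=F) = \P(\shape(Z)\in S\mid \text{type}=F), \]
and summing over the finitely many dominant $F$ with their limiting weights produces exactly the mixture $\P(\shape(Z)\in S\mid f(Z)=n_{\max})$.

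The main obstacle I anticipate is pinning down the homogeneity degree of $J_F$ coming from the Complementary Theorem and thereby verifying that $\beta_F$ is \emph{strictly} increasing in $|F|$, so that types with $|F|<n_{\max}$ are genuinely negligible in the tail. A secondary, more technical issue is the boundary of $U_F$ where a constraint becomes inactive and the cell degenerates to a strictly smaller type; this region carries no mass and can be dispatched by the openness of $U_F$ together with a routine limiting argument.
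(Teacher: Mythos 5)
Your plan is correct in outline and would go through, but you take a substantially more laborious route than is needed, and the ``main obstacle'' you anticipate dissolves once you use the existing machinery at the right level of granularity. You decompose $Z$ by combinatorial type $F$ (the exact set of directions appearing as facets) and then re-derive, type by type, the radial/angular factorisation and the Gamma tail $a^{\beta_F}e^{-\gamma a}$, worrying along the way about the homogeneity of the Jacobian $J_F$ and whether $\beta_F$ is strictly increasing in $|F|$. But the Complementary Theorem (Theorem~\ref{thm:ComplementaryThm} of the paper), which you invoke only as a source of integral representations, already gives you exactly what you need at the coarser level of conditioning on the facet count alone: given $f(Z)=n$, $\shape(Z)$ and $\Phi(Z)$ are independent, and $\Phi(Z)\sim\Gamma_{\gamma,n-d}$, hence $\P(\Phi(Z)>a\mid f(Z)=n)=e^{-\gamma a}\sum_{l=0}^{n-d-1}(\gamma a)^l/l!$, with tail exponent $n-d-1$ strictly increasing in $n$. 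No separate homogeneity count of $J_F$ is required, and the type decomposition plays no role.

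The paper's proof is correspondingly short. It uses Lemma~\ref{eq:SumRlShape}, the Complementary-Theorem identity
\[
\P(\shape(Z)\in S,\,\Phi(Z)>a)
= e^{-\gamma a}\sum_{k\ge d+1}\P(\shape(Z)\in S,\,f(Z)=k)\sum_{l=0}^{k-d-1}\frac{(\gamma a)^l}{l!},
\]
notes that under the finite-support hypothesis this sum truncates at $k=n_{\max}$, extracts the dominant term $k=n_{\max}$, $l=n_{\max}-d-1$, and divides by the same expression with $S$ the whole shape space; the polynomial prefactors cancel and only the ratio of the $k=n_{\max}$ coefficients survives. Your argument, once the obstacle is recognised as already resolved, reduces to the same computation, with the type-$F$ sum coarse-grained into the facet-count sum. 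So: the route is sound, but you are reproving a finer version of the Complementary Theorem rather than applying it, and your stated obstacle is not actually an obstacle in the paper's framework.

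Two small remarks on your write-up. First, the polar decomposition you sketch for a single type $F$ is essentially the proof of the Complementary Theorem itself (compare the proof of Theorem~\ref{thm:ComplementaryThm} and the measure splitting \eqref{eq:MeasureSplitting}), so if you go that way you should either cite the Theorem or fully reproduce its argument rather than treat the factorisation as a black box. Second, your concern about the boundary of $U_F$ is indeed routine (as you say) but worth one sentence: those configurations correspond to non-simple cells and carry zero $\mu^n$-measure, which is already implicit in the indicator $\1(P_{[n]}\in\PPn)$ appearing in the integral representations.
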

Note that when conditioning on the number of facets $f(Z)$, the shape $\shape(Z)$ of the typical cell is independent of the size $\Phi(Z)$ (see the Complementary Theorem \ref{thm:ComplementaryThm}) and is given explicitly in Theorem \ref{thm:ComplementaryThm}.

The paper is organized as follows.
In Section \ref{sec:notandcompl}, we fix the general setting and introduce the so-called Complementary Theorem, which is a practical disintegration of the distribution of the typical cell $Z$.
This is the fundamental probabilistic tool for showing our main results.
In Section \ref{sec:PolytopeApproximation}, we provide the required geometric ingredients which deal with the approximation of polytopes by polytopes with fewer facets.
Section \ref{sec:manyfac} is devoted to the proof of the main results about cells with many facets, i.e. Theorems \ref{mainthm:UpperBoundfn}, \ref{mainthm:LowerBoundfn} and \ref{mainthm:boundsManyFacetsIsoperimeter}.
Finally, in Section \ref{sec:bigcells}, we prove Theorems \ref{mainthm:DistribPhi}, \ref{mainthm:FlatBigPhi} and \ref{mainthm:nmaxshape} which deal with the {\it big cells}, i.e. with a large $\Phi$ content. 
\section{Notations and the Complementary Theorem}\label{sec:notandcompl}
\subsection{Setting and notations}\label{sec:Setting}
As standard references to the following material from convex geometry we refer to the books by Schneider \cite{Schn42nd} and Gruber \cite{Gruberbook}

We work in a $d$-dimensional Euclidean vector space $\R^d$, $d\geq2$, with scalar product $\langle\cdot,\cdot\rangle$, norm $\lVert\cdot\rVert$ and origin $ \origin $.
We denote by $ B ( \vect{x} , r ) $ the closed ball and by $ S ( \vect{x} , r ) = \partial B ( \vect{x} , r ) $ the sphere with center $ \vect{x} $ and radius $ r $, by  $ \B = B ( \origin , 1 ) $ the unit ball and  by $ \S^{d-1} = \partial \B $ the unit sphere.
Let $\HH $ be the space of affine hyperplanes in $\R^d$ with its usual topology and Borel structure.
Every hyperplane $H\in\HH $ has a unique representation 
\[
  H( \vect{u} ,t)
  :=\{ \vect{x}\in\R^d :\  \langle \vect{x}, \vect{u} \rangle=t\}.
\]
with $ \vect{u} \in \S^{d-1}$ and $t>0$. 
For a given hyperplane $H\in\HH $, we write $H^-$, resp. $H^+$, for the closed halfspace with boundary $H$ which contains, resp. excludes the origin, 
\[
  H( \vect{u} ,t)^- = \{ \vect{x}\in\R^d :\ \langle \vect{x}, \vect{u} \rangle \leq t\}
  \quad\text{ and }\quad 
  H( \vect{u} ,t)^+ = \{ \vect{x}\in\R^d :\  \langle \vect{x}, \vect{u} \rangle \geq t\}.
\]
We denote by $ \HS := \HH  \times \{ \pm 1 \} $ the space of halfspaces.

Let $\KK $ be the set of convex bodies (compact convex sets of $\R^d$ with non-empty interior) and denote by $K^o$ the relative interior of a set $K \in \KK$.
We write $\PP $ for the set of all polytopes, and $f(P)$ for the number of facets of a polytope $P \in \PP$.
Denote by $\PPn =\{P \in \PP| f(P)=n \} $ the set of $n$-topes, hence $\PPn \subset \PP \subset\KK$.
For any $ t > 0 $, and $ K , L \in \KK $ we define
\[
  t K := \{ t \vect{x} :\  \vect{x} \in K \},
  \quad
  K + L := \{ \vect{x} + \vect{y} :\  \vect{x} \in K , \vect{y} \in L \}
\]
where the latter is the Minkowski sum of $K$ and $L$.

The sets $\KK$, $\PP$, and $\PPn$ are equipped with the Hausdorff distance $d_H$, 
\[
  d_H(K,L) = \min \{r\colon  K\subset L+rB^d,\ L \subset K+rB^d \} ,
\]
and with the associated topology and Borel structure.

Steiner's formula says that the volume of the Minkowski sum of $K \in \KK$ and a ball of radius $r$ is a polynomial in $r$,
\[
  V_d(K + r \B)= \sum_{i=0}^d \kappa_i V_{d-i}(K) r^{i}  
\]
where $\kappa_i$ denotes the volume of the $i$-dimensional unit ball, and $V_i(K)$ is the $i$-th \textit{intrinsic volume} of $K$.
E.g., $V_d$ is the usual volume, $2 V_{d-1}$ the surface area and $V_1$ a multiple of the mean width of $K$. 
Steiner's formula can be generalized to all intrinsic volumes, 
\begin{equation}
  \label{eq:steinerintrinsic}
  V_j(K + r \B)= \sum_{i=0}^j \binom{d-j+i}{i} \frac{\kappa_{d-j+i}}{\kappa_{d-j}}  V_{j-i}(K) r^{i}  .
\end{equation}
The isoperimetric inequality tells us that for $1 \leq i < j \leq d$ the isoperimetric ratio is bounded,
\begin{equation}
  \label{eq:isopinequ}
  \frac {V_j(K)^{\frac 1j}}{V_i(K)^{\frac 1i}} \leq \frac {\kappa_j^{\frac 1j}} {\kappa_i^{\frac 1i}}  
\end{equation}
with equality if and only if $K$ is a ball.
The ratio is well defined if the dimension of $K$ is at least $i$ and equals zero if and only if the dimension is at most $(j-1)$.
In Theorems \ref{mainthm:boundsManyFacetsIsoperimeter} and \ref{mainthm:FlatBigPhi} we prove that with high probability the isoperimetric ratio of the cells with many facets is bounded away from zero, thus proving that it is not too close to a $(j-1)$-dimensional convex body.

Let $\eta$ be a stationary Poisson hyperplane process in $\R^d$, that is a Poisson point process in the space $\HH $ invariant under translation.
We often identify a simple counting measure with its support, so that for any set $ \mathcal{A} \subset \HH  $, both notations $\eta(\mathcal{A})$ and $| \eta \cap \mathcal{A} |$ denote the number of elements of $\eta$ in $\mathcal{A}$.

Since $\eta$ is stationary, its {intensity measure} $\E \, \eta ( \cdot ) $ decomposes into Lebesgue measure and an even probability measure $ \varphi $ on $\S^{d-1}$,
\begin{equation}\label{def:Thetamu}
  \E \, \eta ( \cdot )
  = \gamma  \mu (\cdot)
  := \gamma \int\limits_{\S^{d-1}}\int\limits_0^\infty \1 \left( H( \vect{u} ,t)\in\cdot \right) \dint t \dint \varphi( \vect{u} ) , 
\end{equation}
where $ \gamma >0$ and $\mu$ is a measure  on $\HH $.
We call $ \gamma $ the {intensity} and $\varphi$ the {directional distribution} of the hyperplane process $\eta$.
We assume that the support of $\varphi$ is not contained in a great circle of $\S^{d-1}$.
When $\varphi$ is the normalized surface area measure on $\S^{d-1}$, we say that $\eta$ is {isotropic}.

The closure of each of the connected components of the complement of the union $\bigcup_{H\in\eta} H$ is almost surely a polytope (because the support of $\varphi$ is not contained in a great circle).
These polytopes are the {cells} of the Poisson hyperplane mosaic $X$ induced by $\eta$.
We can see $X$ as a point process in $\PP$.
To describe the distribution of $X$ we need the notion of the \textit{$\Phi$-content} of a convex body $K$ which is given by
\[
  \Phi(K)
  := \int\limits_{\S^{d-1}} h(K, \vect{u} )\dint \varphi(\vect{u} ) ,
\]
where $h(K, \vect{u} ):=\max\{\langle \vect{x}, \vect{u} \rangle:\  \vect{x}\in K\}$ is the value of the {support function} of $K$ at $ \vect{u} $.
In the important case when $\varphi$ is a constant and hence the directional distribution is the uniform distribution on $\S^{d-1}$, the $\Phi$-content of a convex set $K$ is up to a constant just the first intrinsic volume $V_1(K)$.

We will have to replace bounds expressed in terms of the first intrinsic volume $V_1 (K) $ by bounds expressed in terms of $\Phi$. 
Because $V_1$ and $\Phi$ are homogeneous of degree one, and $\Phi$ is continuous and strictly positive on the compact set $\{ \mbox{convex compact sets} \ K\subset \R^d :\  V_1(K)=1 \mbox{ and } \origin\in K\}$, we see that $0 < c_\Phi := \sup_{K \in \KK} V_1(K) / \Phi(K)   < \infty $.
Thus for all $K \in \KK$,
\begin{equation}
  \label{eq:V1RPhi}
  V_1(K) \leq c_\Phi \Phi ( K ) . 
\end{equation}
The definition of $\Phi$ is motivated by the fact that, for any $K\in\KK $,
\[
  \P (\eta\cap K=\emptyset)
  = e^{- \gamma \Phi(K)} .
\]
Note that $\Phi$ is homogeneous of degree $1$ and translation invariant, $\Phi( t K + \vect{x} )  = t \Phi(K)$ for any $ K \in \KK $, $t \geq 0 $ and $ \vect{x} \in \R^d $.
In the special case where $\eta$ is isotropic, $ 2 \Phi $ is the so called {mean width} of $K$.
Note that $K \subset \KK$ implies that  $\Phi(K)>0$ since $K$ contains at least $2$ points.
For a set $\mathcal{X}\subset\KK$ of convex bodies we define
\[
  \mathcal{X}_\Phi
  := \{ K\in\mathcal{X} :\  \Phi (K) = 1 \} \subset {\mathcal X}.
\]

Let $\cent:\KK\to\R^d$ be a \textit{center function}, i.e. a measurable map compatible with translations and homogeneous under the scale action: 
\[
  \cent( t K+ \vect{x})=t\cent(K)+ \vect{x}
\]
for any $ t \in (0,\infty) $ and any $ \vect{x} \in \R^d $.
For example, $\cent$ can be the center of mass.
In this paper we assume that $\cent(K)\in K$ for every $K\in\KK$ and that  $\cent$ is $1$-Lipschitz, i.e. $\|\cent(K)-\cent(L)\| \leq d_H(K,L)$ for $K,L\in\KK$.
For a set $\mathcal{X}\subset\KK$ of convex bodies we define
\[
  \mathcal{X}_{\cent}
  := \{ K\in\mathcal{X} :\  \cent (K)= \origin \} \subset {\mathcal X}.
\]
In particular, $\PP_\cent$ denotes the set of polytopes with center at the origin.
Due to the natural homeomorphism
\begin{eqnarray*}
  \PP & \to & \R^d \times \PPc\\
  P & \mapsto & \left( \cent (P), P - \cent(P) \right),
\end{eqnarray*}
we will consider from now on $X$ as a {germ-grain process} in $\R^d$ with {grain space} $\PPc$.
Since $\eta$ is stationary, this is also the case for $X$.
That implies the existence of a probability measure $\Q$ on $\PPc$ such that the intensity measure of the germ-grain process $X$ decomposes into 
$\Q$ and Lebesgue measure $\MeasLebesgue$,
\begin{equation}
  \label{eq:intSplit}
  \E\, X( \{ P-\cent(P) \in C ,\,  \cent (P) \in A \}) =
  \IntMos  \MeasLebesgue(A)\,  \Q(C) 
\end{equation}
for $C \subset \PPc$.
We call $\Q$ the {grain distribution}, and the constant 
$ \IntMos   = \E\, X( \{ P \in \PP ,\, \cent (P) \in [0,1]^d \}) $ 
the {intensity of $X$}.
It is easy to see that $\IntMos$ is a multiple of $\gamma^d$, where $\gamma$ is the intensity of the Poisson hyperplane process.
A random centred polytope $Z\in\PPc$ with distribution $\Q$ is called {typical cell} of~$X$.

For $ K \in \KK $, we define its \textit{shape} to be
\[
  \shape (K) = \frac{1}{\Phi(K)} ( K - \cent (K) ) . 
\]
We have that $\shape$ is translation and scale invariant, i.e. for any $K\in\KK$, $t\in(0,\infty)$ and any $ \vect{x} \in \R^d $, we have
\[
  \shape ( t K + \vect{x} )
  = \shape ( K ).
\]
We want to point out that in this paper the shape is not rotation invariant.
We call the set $\KKcs = \shape (\KK)$ the \textit{shape space}.
Similarly we call $\PPcs = \shape (\PP)$ the \textit{shape space of polytopes}, and $\PPncs = \shape (\PPn)$ the \textit{shape space of $n$-topes}.
Note that, $ \KKcs , \PPcs $ and $ \PPncs $ are compact spaces.

We have the following natural homeomorphism
\begin{eqnarray*}
  \mathfrak{h}:\KK & \to & \R^d \times (0,\infty) \times \KKcs\\
  K & \mapsto & \left(\cent (K),\Phi(K),\shape (K)\right).
\end{eqnarray*}
Restricting the domain of $\mathfrak{h}$ to $\PP_{n}$ or $\PPnc$ induces the homeomorphisms
\[
  \mathfrak{h}_n :
  P  \mapsto  \left(\cent (P),\Phi(P),\shape (P)\right),
  \ \text{ and } \ 
  \mathfrak{h}_{n,\cent}:
  P  \mapsto  \left(\Phi(P),\shape (P)\right).
\]

The measure $\mu$ given in \eqref{def:Thetamu} is by definition homogeneous and translation invariant.
It gives rise to the measure
\[
  \MeasHalfSpaceMu := \mu \otimes \MeasCount 
\]
on the set of halfspaces $\HS$, where $\MeasCount$ is the counting measure on $ \{ \pm 1 \} $.
This in turn  induces naturally the measure $\MeasPPnMu$ on $\PPn$ via
\begin{equation}
  \label{def:MeasPPnMu}
  \MeasPPnMu (D)
  := \frac{1}{n!} \int\limits_{\HS^n} 
  \1 \left( \bigcap_{i=1}^n H_i^{\epsilon_i} \in D \right) \MeasMultiSimple{\MeasHalfSpaceMu}{n}{H^\epsilon} ,
\end{equation}
where $ \MeasHalfSpaceMu^n := \MeasHalfSpaceMu \otimes \cdots \otimes \MeasHalfSpaceMu $ denotes the product measure and $ \bd{H^\epsilon} := ( H_1^{\epsilon_1} , \ldots , H_n^{\epsilon_n} ) $.
Note that, since $D\subset\PPn$, the integrand $\1 \left( \bigcap_{i=1}^n H_i^{\epsilon_i} \in D \right)$ above is equal to $0$ as soon as the intersection $\bigcap_{i=1}^n H_i^{\epsilon_i}$ is not a polytope with $n$ facets.

Let $A\subset\R^d$ and $C\subset\PPncs$ be Borel sets and $b>0$.
Because the measure $ \MeasPPnMu $ is homogeneous of degree $n$ and translation invariant we obtain
\begin{align*}
  \mathfrak{h}_n \left( \MeasPPnMu \right) (A \times (0,b) \times C)
  & = b^n \mathfrak{h}_n \left( \MeasPPnMu \right) (b^{-1} A \times (0,1) \times C) \\
  & = \MeasLebesgue(b^{-1} A)\, b^n\, \mathfrak{h}_n \left( \MeasPPnMu \right) ([0,1]^d \times (0,1) \times C) \\
  & = \MeasLebesgue (A)\, b^{n-d}\, \mathfrak{h}_n \left( \MeasPPnMu \right) ([0,1]^d \times (0,1) \times C) .
\end{align*}
For $C=\PPn$ this immediately gives the following useful lemma. Here and in the following for abbreviation we put $P_{[n]}= \bigcap_{i\in I} H_i^{\epsilon_i}$ for $\bd{H^\epsilon} := ( H_1^{\epsilon_1} , \ldots , H_n^{\epsilon_n} ) $. 
\begin{lemma}
  \label{lem:Phichange2}
For any $b>0$ and any Borel set $A \subset \R^d $ we have
  \begin{align*}
    & \int\limits_{\HS^n}
    \1 \left( \cent (P_{[n]}) \in A \right)
    \1 \left( \Phi ( P_{[n]} ) < b \right)
    \1 \left( P_{[n]} \in \PPn \right)
    \MeasMultiSimple{\MeasHalfSpaceMu}{n}{H^\epsilon}
    \\ \notag
    & = \MeasLebesgue(A) b^{n-d}
    \int\limits_{\HS^n}    
    \1 \left( \cent (P_{[n]}) \in [0,1]^d \right)
    \1 \left( \Phi ( P_{[n]} ) < 1 \right)   
    \1 \left( P_{[n]} \in \PPn \right)
    \MeasMultiSimple{\MeasHalfSpaceMu}{n}{H^\epsilon} . 
  \end{align*}
 \end{lemma}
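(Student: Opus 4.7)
My plan is to read the lemma as the specialization of the scaling identity displayed immediately before its statement, taken at $C$ equal to the full shape space $\PPncs$ of $n$-topes. No new idea should be needed, because all of the homogeneity and translation-invariance content has been isolated in that preceding display.

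Concretely, I first unfold both sides of the claim using the definition \eqref{def:MeasPPnMu} of $\MeasPPnMu$. Under the homeomorphism $\mathfrak{h}_n : P \mapsto (\cent(P), \Phi(P), \shape(P))$, the conditions $\cent(P_{[n]}) \in A$, $\Phi(P_{[n]}) < b$ and $P_{[n]} \in \PPn$ correspond respectively to the three factors of $\R^d \times (0,\infty) \times \PPncs$. Hence the left-hand side of the claim equals
$$n!\,\MeasPPnMu\bigl(\{P \in \PPn : \cent(P) \in A,\ \Phi(P) < b\}\bigr) = n!\,\mathfrak{h}_n(\MeasPPnMu)\bigl(A \times (0,b) \times \PPncs\bigr),$$
and similarly the right-hand side equals $\MeasLebesgue(A)\, b^{n-d}$ times $n!\,\mathfrak{h}_n(\MeasPPnMu)\bigl([0,1]^d \times (0,1) \times \PPncs\bigr)$. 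Applying the preceding scaling identity with $C = \PPncs$ then closes the argument.

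The ingredients that make the scaling identity work are already in place: $\MeasHalfSpaceMu$ is translation invariant and $1$-homogeneous in the distance parameter $t$, so $\MeasPPnMu$ is translation invariant and homogeneous of degree $n$; the center function $\cent$ is translation equivariant, so translations of $\R^d$ act only on the first coordinate of $\mathfrak{h}_n$; and $\Phi$ is translation invariant and $1$-homogeneous, so dilating by $b$ multiplies the second coordinate by $b$ while leaving $\shape$ unchanged. Consequently there is no genuine obstacle, and no step that I expect to require a new idea: the lemma is best viewed as a convenient rewriting of the already-established scaling identity as an integral over $\HS^n$, in precisely the form that will be invoked in later proofs.
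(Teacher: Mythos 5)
Your proposal is correct and follows exactly the paper's intended route: the paper derives the scaling identity for $\mathfrak{h}_n(\MeasPPnMu)$ immediately before the lemma and then remarks that the lemma is the special case $C=\PPncs$, which is precisely what you unfold (correctly tracking the $n!$ from \eqref{def:MeasPPnMu} on both sides). The only discrepancy is in the paper's prose, which writes ``$C=\PPn$'' where $C=\PPncs$ is meant; you have silently fixed this.
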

 
To simplify our notation we introduce on $\PPncs$ the normalized push forward measure
\[
  \MeasPPncsMu (\cdot)
  = 
  \mathfrak{h}_n(\MeasPPnMu) \left( [0,1]^d \times (0,1) \times \cdot \right) 
  = \MeasPPnMu \left( \mathfrak{h}_n^{-1} \left( [0,1]^d \times (0,1) \times \cdot \right) \right)
\]
and on $\R_+$ the measure 
\begin{equation*}
  \MeasLebesgueModif{n} (\cdot) = 
  \int\limits_0^{\infty} \1 (t\in\cdot)\, n t^{n-1}\, \dint t,
\end{equation*}
which is homogeneous of degree $n$.
With these notations the pushforward measure $ \mathfrak{h}_n ( \MeasPPnMu ) $ splits into the following product of measures:
\begin{equation}
  \label{eq:MeasureSplitting}
  \mathfrak{h}_n \left( \MeasPPnMu \right)
  = \MeasLebesgue \otimes \MeasLebesgueModif{n-d} \otimes \MeasPPncsMu .
\end{equation}
Because  $ \MeasPPncsMu ( \PPncs )$ is finite, $ \MeasPPncsMu (\cdot) / \MeasPPncsMu ( \PPncs ) $ defines a probability measure on $\PPncs$.

That all the measures mentioned in this section are connected seems already obvious.
This will be made precise in the next section.

\subsection{Complementary Theorem}
The essential backbone of our paper is the Complementary Theorem. 
Similar results have been proved before, see e.g. Miles \cite{Miles71}, M{\o}ller and Zuyev \cite{MollerZuyev96} and Cowan \cite{Cowan06}, but for our purposes we need a very detailed description which we could not find in the literature.
Therefore and for the sake of completeness we state it here explicitly and give a proof.
\begin{theorem}
  \label{thm:ComplementaryThm}
  Let $ n \geq d+1 $ be an integer.
  \begin{enumerate}
    \item For any Borel set $S \in \PPncs$ of shapes we have
      \begin{align}
        \label{eq:ProbaIntegralFormbis}
        &\P ( f(Z) = n\, ,\shape(Z) \in S )
        \\&=  \notag
        \frac{ \gamma^d }{ \IntMos } (n-d)!
        \int\limits_{\PPn}
        \1 \left( \cent (P) \in [0,1]^d \right)
        \1 \left( \Phi(P) <1  \right)
        \1 \left( \shape(P) \in S  \right)
        \dint \MeasPPnMu ( P ).
      \end{align}
    \item \textbf{(Complementary Theorem)}
      If we condition the typical cell $Z$ to have $n$ facets, then
      \begin{enumerate}
        \item $ \Phi (Z) $ and $ \shape (Z) $ are independent random variables, 
        \item $ \Phi (Z) $ is $\Gamma_{\gamma, n-d}$ distributed, and 
        \item $\shape(Z)$ has probability distribution  $\MeasPPncsMu (\cdot) / \MeasPPncsMu ( \PPncs )$. 
      \end{enumerate}
  \end{enumerate}
\end{theorem}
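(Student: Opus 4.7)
The plan is to reduce the statement to a Palm-type computation on the hyperplane process $\eta$. Every cell of the tessellation $X$ with $n$ facets is of the form $P_{[n]} = \bigcap_{i=1}^n H_i^{\epsilon_i}$ for an unordered choice of $n$ hyperplanes from $\eta$ together with signs $\epsilon_i \in \{\pm 1\}$, such that $P_{[n]} \in \PPn$ and no further hyperplane of $\eta$ meets the interior of $P_{[n]}$. Applying the Slivnyak--Mecke formula, summing over the $2^n$ sign vectors and dividing by $n!$ to account for ordering, the second requirement produces the Poisson void probability $e^{-\gamma \Phi(P_{[n]})}$, while the definition \eqref{def:MeasPPnMu} of $\MeasPPnMu$ collapses the resulting integral into
\begin{equation*}
  \E X\bigl(\{f(P)=n,\,\cent(P)\in[0,1]^d,\,P-\cent(P)\in C\}\bigr)
  = \gamma^n \int_{\PPn} \1(\cent(P)\in[0,1]^d)\,\1(P-\cent(P)\in C)\,e^{-\gamma \Phi(P)}\,\dint \MeasPPnMu(P).
\end{equation*}
Comparing with the defining relation \eqref{eq:intSplit} of $\Q$ then expresses the joint probability of interest as a $\MeasPPnMu$-integral.

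Next I would apply the disintegration $\mathfrak{h}_n(\MeasPPnMu) = \MeasLebesgue \otimes \MeasLebesgueModif{n-d} \otimes \MeasPPncsMu$ from \eqref{eq:MeasureSplitting} to decouple the center, size and shape variables. For a shape event $C = \{P \in \PPnc : \shape(P) \in S\}$, the center integral over $[0,1]^d$ contributes $1$, the $\Phi$-integral against the Gamma-type kernel yields
\begin{equation*}
  \int_0^\infty e^{-\gamma t} (n-d) t^{n-d-1} \dint t = (n-d)! \, \gamma^{-(n-d)},
\end{equation*}
and the shape integral is $\MeasPPncsMu(S)$. Collecting the factors gives $\P(f(Z)=n, \shape(Z) \in S) = \gamma^d (n-d)! \MeasPPncsMu(S) / \IntMos$. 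Part (1) follows at once, since by the same disintegration the right-hand side of \eqref{eq:ProbaIntegralFormbis} evaluates to the same quantity, using $\int_0^1 (n-d) t^{n-d-1} \dint t = 1$.

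For part (2), I would redo this calculation without integrating out the $\Phi$ variable, inserting an additional indicator $\1(\Phi(P) \in A)$ into the $\MeasPPnMu$-integral. The same disintegration produces a joint distribution on $(\Phi(Z), \shape(Z))$, conditional on $f(Z) = n$, that splits as the product of the Gamma density $\gamma^{n-d} t^{n-d-1} e^{-\gamma t}/(n-d-1)!$ on the size-factor and the normalized measure $\MeasPPncsMu(\cdot)/\MeasPPncsMu(\PPncs)$ on the shape-factor. This simultaneously yields independence (a), the $\Gamma_{\gamma, n-d}$ distribution (b), and the shape distribution (c).

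The main delicacy lies in the Palm step: one must verify that, once Slivnyak--Mecke has reduced the computation to $n$ deterministic hyperplanes inserted into an independent copy of $\eta$, the conditional probability that no remaining hyperplane crosses the interior of $P_{[n]}$ equals exactly $e^{-\gamma \Phi(P_{[n]})}$. This combines the Poisson void probability with translation invariance of $\mu$, so that $\Phi$ correctly captures the intensity of hyperplanes hitting $P_{[n]}$ even when the polytope does not contain the origin. Keeping clean track of the $1/n!$ factor, the sum over $\epsilon$, and the compatibility with the definition of $\MeasPPnMu$ is the principal bookkeeping hazard, but no further geometric ingredient is required.
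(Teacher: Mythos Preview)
Your proposal is correct and follows essentially the same route as the paper: Slivnyak--Mecke applied to the representation of $n$-facet cells as ordered intersections of half-spaces, the Poisson void probability $e^{-\gamma\Phi(P)}$, and then the disintegration \eqref{eq:MeasureSplitting} to separate center, size and shape, with the Gamma integral producing the factor $(n-d)!\,\gamma^{-(n-d)}$. The only cosmetic difference is that the paper recovers \eqref{eq:ProbaIntegralFormbis} by re-expressing $\gamma^d(n-d)!\,\MeasPPncsMu(S)/\IntMos$ via the definition of $\MeasPPncsMu$ rather than by evaluating the right-hand side directly, but this amounts to the same computation.
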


\subsection{Proof of Theorem \ref{thm:ComplementaryThm}}
The number of cells of the mosaic $X$ in a subset $D \subset \PPn$ is
\[
  X(D)
  = \frac{1}{n!}
  \sum_{H_1,\ldots,H_n\in\eta^n_{\neq}}
  \sum_{ \vect{\epsilon} \in\{\pm1\}^n}
  \1 \left(\bigcap_{i=1}^n H_i^{\epsilon_i} \in D\right)
  \1 \left(\eta\cap\Big(\bigcap_{i=1}^n  H_i^{\epsilon_i} \Big)^o=\emptyset\right) , 
\]
because there are $n!$ possibilities of ordering a list of $n$ different halfspaces.
The Slivnyak-Mecke formula, see e.g. [SchneiderWeil08 p.68, Corollary 3.2.3] gives for $ P_{[n]} = \bigcap_{i=1}^n H_i^{\epsilon_i} $ that
\begin{align*}
 \E X(D)
  &=
  \frac{\gamma^n}{n!}
  \int\limits_{\HS^n}
  \1 \left( P_{[n]} \in D\right)
  \P \left(\eta \cap P_{[n]}^o =\emptyset\right)
  \MeasMultiSimple{\MeasHalfSpaceMu}{n}{H^\epsilon}
  \\ &=
  \gamma^n\int\limits_{\PPn}
  \1 \left( P \in D\right)
  \P \left(\eta \cap P^o =\emptyset\right)
  \dint \MeasPPnMu(P)
\end{align*}
by the definition of $\MeasPPn$.
Because $\eta$ is a Poisson process we have 
\[
  \P \left(\eta \cap P^o =\emptyset\right)
  = e^{ -\gamma \Phi(P) } .
\] 
In the following we are interested in the case where $D=\mathfrak{h}_n^{-1} ( [0,1]^d \times B \times C)$ with Borel sets $B \subset [0, \infty)$ and $C \subset \PPncs $.
By \eqref{eq:MeasureSplitting} we obtain in this case
\begin{align}
 \notag
 \E X(\mathfrak{h}_n^{-1} ( [0,1]^d \times B \times C ))
 &= \gamma^n\int\limits_{\PPn} \1 ( P \in \mathfrak{h}_n^{-1} ( [0,1]^d \times B\times C )) e^{ - \gamma \Phi(P) } \dint \MeasPPnMu (P)
 \\&= \label{eq:EXsplit}
 \gamma^n\int\limits_{C} \int\limits_B \int\limits_{[0,1]^d}
   \dint \MeasLebesgue(\bd{c}) \  e^{ - \gamma t } \, \dint \MeasLebesgueModif{n-d} (t) \dint \MeasPPncsMu (P) .
\end{align}

\medskip
For the first part of Theorem~\ref{thm:ComplementaryThm}, observe that by the definition \eqref{eq:intSplit} of the intensity measure $\Q$ and using \eqref{eq:EXsplit} we have
\begin{align*}
  \P ( f(Z)=n,\,  \shape(Z) \in C  )
  & = \frac{\IntMos}{\IntMos} \MeasLebesgue ([0,1]^d) \  \Q ( \PPnc \cap \shape^{-1}(C) )\\
  & = 
  \frac1{\IntMos} \E X ( \{ P \in \PPn,\, \cent(P) \in [0,1]^d ,\, \shape(P) \in C \})\\
  & =
  \frac{\gamma^n}{\IntMos}  
  \int\limits_{C} \int\limits_0^\infty \int\limits_{[0,1]^d}
   \dint \MeasLebesgue(\bd{c}) \  e^{ - \gamma t } \, \dint \MeasLebesgueModif{n-d} (t) \dint \MeasPPncsMu (P) . 
\end{align*}
Because the integration with respect to $t$ gives $(n-d)! \gamma^{-(n-d)} \MeasLebesgueModif{n-d}([0,1])$ by elementary computations, the right hand side equals
\[
  \frac{\gamma^d}{\IntMos } (n-d)!\ \MeasPPnMu ( \{ P \in \PPn ,\  \cent(P) \in [0,1]^d ,\ \Phi(P) < 1 , \shape(P) \in C \} ) .
\]
by the definition of $\MeasPPnMu $.
This proves the first part of the theorem.
      
Analogously, for the second part we have 
\begin{align*}
  \P ( f(Z) = n,\,  \Phi(Z) \in B ,\, \shape(Z) \in C )
  &=
  \frac{\gamma^n}{\IntMos}  
  \int\limits_C \int\limits_B \int\limits_{[0,1]^d} 
  \dint \MeasLebesgue(\bd{c})\, e^{ -\gamma t } \,  \dint \MeasLebesgueModif{n-d} (t) \dint \MeasPPncsMu (P)
  \\&=
  \frac{\gamma^n}{\IntMos}   (n-d) \MeasPPncsMu ( C ) \int\limits_B e^{- \gamma t} t^{n-d-1} \dint t .
\end{align*}
Thus, if we condition $Z$ to have $n$ facets, we have that $\shape(Z)$  and $\Phi(Z)$ are independent random variables with distribution
\[
  \P ( \shape(Z) \in C \mid f(Z)=n) =  \MeasPPncsMu (C)/ \MeasPPncsMu (\PPnc)
\]
and 
\[
  \P ( \Phi(Z) \in B \mid f(Z)=n)
  = \frac{\gamma^{n-d}}{(n-d-1)!} \int\limits_B e^{ - \gamma t } t^{n-d-1} \dint t .
\]

\section{Polytope approximation}
\label{sec:PolytopeApproximation}

In this section we provide the necessary geometric ingredients for our main results. The tools used here are from convex geometry, in particular approximation of polytopes by polytopes with fewer facets. 

\subsection{General approximation results for polytopes}
\label{subsec:genapprox}
The starting point of this subsection is \cite{ReisnerSchuttWerner01} from Reisner, Sch\"{u}tt and Werner.
Our goal is to show that if a polytope $P$ has many facets, then a good proportion of them have only a tiny influence.
This will be a key ingredient to obtain a recurrence relation between the probabilities $\P (f(Z)=n)$ and $\P (f(Z)=n-1)$ in Theorem \ref{thm:UpperBoundReccurence}.
More precisely, for $I \subset \N$ and a set of halfspaces $H_i^{\epsilon_i}$, $i \in I$, we define
\[
  P_I := \cap_{i\in I} H_i^{\epsilon_i} . 
\]
Throughout the paper we use the notation 
\[
  [n]=\{1, \dots, n\} .
\]
For $j \leq n$ we have $P_{[n]} \subset P_{[n]\setminus\{j\}}$.
We will measure the distance between $P_{[n]}$ and $P_{[n]\setminus\{i\}}$, both with the Hausdorff distance and the difference of $\Phi$-content.
We will show in the crucial Lemma \ref{lem:RSW3} that for a subset $J \subset [n]$ of size at least $n/4$  we have good upper bounds of the distances between $P_{[n]}$ and $P_{[n]\setminus\{j\}}$ for $j \in J$.

We first present Lemmata \ref{lem:RSW} and \ref{lem:RSW2} which are adaptations of results from \cite{ReisnerSchuttWerner01}.
\begin{lemma}
  \label{lem:RSW}
  There exists a constant $ \Cl[univ]{RSW1} >1$, independent of $d$, such that the following holds.  
  For any integer $ m > \Cr{RSW1}^{(d-1)/2} $ and any $ K \subset \KK $, there exists a polytope $Q \supset K$ with $m$ facets such that
  \[
     d_H ( K , Q) < \Cr{RSW1} c_\Phi \Phi(K) m^{-\frac 2{d-1}}.
  \]
\end{lemma}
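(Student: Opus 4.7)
The plan is to construct $Q$ by circumscribing $K$ with $m$ supporting halfspaces whose outer normals form a well-spread net on $\S^{d-1}$, then bound the Hausdorff distance between $K$ and this $Q$. This is the classical strategy going back to Dudley, Gordon--Reisner--Sch\"{u}tt and Reisner--Sch\"{u}tt--Werner; the novelty here is that the error is expressed in terms of $\Phi$ rather than the more commonly used intrinsic size functionals.

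By the homogeneity of both $\Phi$ and $d_H$, one may first normalize so that $\Phi(K)=1$. Inequality \eqref{eq:V1RPhi} then gives $V_1(K)\leq c_\Phi$, which in turn bounds the diameter of $K$ by a constant multiple of $c_\Phi$. Next, fix an angular resolution $\delta$ of order $m^{-\frac{1}{d-1}}$ and choose a $\delta$-net $\{u_1,\ldots,u_m\}\subset\S^{d-1}$ of cardinality at most $m$; standard volume estimates on spherical caps show that such a net exists precisely when $m$ exceeds a threshold of the form $\Cr{RSW1}^{(d-1)/2}$ for some universal constant, which is exactly the hypothesis on $m$. For each $u_i$, let $H_i$ be the supporting hyperplane of $K$ with outer normal $u_i$ and let $H_i^-$ be the closed halfspace containing $K$; setting $Q:=\bigcap_{i=1}^m H_i^-$ yields a polytope with at most $m$ facets, and $Q\supset K$ by construction.

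To control $d_H(K,Q)$, pick any $x\in Q$ and let $y\in\partial K$ be the point of $K$ nearest to $x$, with outer unit normal $u$. By the net property there exists $u_i$ with $\|u-u_i\|\leq\delta$, and an elementary convex-geometric estimate---the supporting hyperplane at $y$ is nearly parallel to $H_i$, so the ``cap'' of $Q$ cut off beyond $y$ has thickness quadratic in $\delta$ up to a diameter factor---gives $\|x-y\|\lesssim\operatorname{diam}(K)\,\delta^2\lesssim c_\Phi\,m^{-\frac{2}{d-1}}$. Undoing the normalization $K\mapsto K/\Phi(K)$ then produces the claimed bound $d_H(K,Q)<\Cr{RSW1}\,c_\Phi\,\Phi(K)\,m^{-\frac{2}{d-1}}$, after adjusting the universal constant to absorb the numerical factors.

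The main obstacle is securing a universal constant $\Cr{RSW1}$ that is genuinely independent of $d$, playing simultaneously the role of the threshold for $m$ and of the multiplicative constant in the Hausdorff bound. This requires dimension-free sphere-covering estimates together with careful control of the quadratic approximation of $\partial K$ at a boundary point; these ingredients follow the line of argument in \cite{ReisnerSchuttWerner01}, and the essential adaptation here is to replace the intrinsic size functional used there by $\Phi(K)$ via \eqref{eq:V1RPhi}.
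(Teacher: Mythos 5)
The paper's proof of Lemma~\ref{lem:RSW} is a one-liner: it invokes Corollary 2.8 of \cite{ReisnerSchuttWerner01} verbatim, which gives a circumscribed polytope $Q$ with $m$ facets and $d_H(K,Q) < \Cr{RSW1}\, R(K)\, m^{-2/(d-1)}$ (with $R(K)$ the circumradius and $\Cr{RSW1}$ dimension-free), and then replaces $R(K)$ by $c_\Phi\Phi(K)$ using $R(K)\le V_1(K)\le c_\Phi\Phi(K)$. Your reduction from $R(K)$ (resp.\ $\operatorname{diam}(K)$) to $c_\Phi\Phi(K)$ via \eqref{eq:V1RPhi} matches this, and the normalization $\Phi(K)=1$ is harmless.

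However, your attempt to re-derive the RSW estimate from scratch has a genuine gap at its core. You construct $Q$ by intersecting the supporting halfspaces $H_i^-$ whose outer normals $u_i$ form a $\delta$-net on $\S^{d-1}$ with $\delta\sim m^{-1/(d-1)}$, and claim that the ``cap of $Q$ cut off beyond $y$'' has thickness $\lesssim \operatorname{diam}(K)\,\delta^2$. This is not true for a general convex body. The quadratic gain from $\delta$ to $\delta^2$ in the Dudley-type argument relies on the quantity $h(K,u_i)-\langle y,u_i\rangle$ being of order $\delta^2\cdot\operatorname{diam}(K)$ when $\angle(u,u_i)\le\delta$; this holds for round bodies (e.g.\ the ball, where one computes $\|x\|\le(\cos\delta)^{-1}\approx 1+\delta^2/2$), but fails for elongated bodies. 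Concretely, take $K=[-1,1]^{d-1}\times[-\eta,\eta]$ with $\eta$ small and a $\delta$-net that does not contain the north pole $e_d$, say whose closest point to $e_d$ is at angle $\approx\delta$. Then the point $p=(0,\ldots,0,\eta+h)$ lies in every $H_i^-$ as long as $h\lesssim \tan\theta_i$ for all $i$, so $Q$ contains a spike of height comparable to $\delta$, i.e.\ of order $m^{-1/(d-1)}$, which is much larger than $\operatorname{diam}(K)\,m^{-2/(d-1)}$ once $m$ is large. The same phenomenon occurs for a thin ellipsoid. So the uniform-net circumscription gives only the \emph{linear} rate $d_H(K,Q)\lesssim\operatorname{diam}(K)\,\delta$, which is not enough.

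The quadratic rate $m^{-2/(d-1)}$ with a constant proportional to $R(K)$ for circumscribed polytopes is a genuine theorem, but its proof (in \cite{ReisnerSchuttWerner01}, via their Propositions 2.6--2.7 and Corollary 2.8, built on iteratively dropping facets) is not a uniform-net argument; the choice of supporting halfspaces must adapt to the local geometry of $\partial K$ (in a thin body most of the budget must be spent near the flat directions). You flag the dimension-independence of the constant as the ``main obstacle'', but the more serious issue is that the construction you propose does not yield the claimed exponent at all. If you want a self-contained proof, you would need to either reproduce the adaptive construction of \cite{ReisnerSchuttWerner01}, or cite their Corollary 2.8 as the paper does and keep only the elementary step $R(K)\le V_1(K)\le c_\Phi\Phi(K)$.
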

\begin{proof}
  This is an immediate consequence of Corollary $2.8.$ of \cite{ReisnerSchuttWerner01} which says that 
  \[
    d_H ( K , Q ) < \Cr{RSW1} R(K) m^{-\frac{2}{d-1}} 
  \]
  where $R(K)$ is the circumradius of $K$, i.e. the radius of the smallest ball containing the convex body $K$.
  The lemma follows from 
  \[
    V_1(K) \geq V_1 ([\vect o, R(K) \vect u]) = R(K) 
  \]
  and \eqref{eq:V1RPhi}.
\end{proof}

The next lemma shows that if the convex body itself is a polytope $P_{[n]}= \cap_{ i = 1 }^n H_i^- $, then $Q$ can be taken as the intersection $P_I$ of suitable supporting halfspaces of $P_{[n]}$.
Its proof is similar to the proof of Lemma~4.3 in \cite{ReisnerSchuttWerner01}.
\begin{lemma}
  \label{lem:RSW2}
  There exist constants $ \Cl[univ]{RSW2} $ and $ \Cl[univ]{RSW3} >0$, such that the following holds.
  For any integer $ k > \Cr{RSW2} $ and any simple polytope $ P_{[n]} $ with $ n $ facets, there exists a subset $ I \subset [ n ] $ with $|I| \leq k $ such that
  \[
    d_H ( P_{[n]}, P_I ) 
    < \Cr{RSW3} c_\Phi \Phi ( P_{[n]} ) k^{ - \frac 2{d - 1} } .
  \]
\end{lemma}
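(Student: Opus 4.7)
The plan is to reduce the claim to Lemma~\ref{lem:RSW} by selecting, for each facet of the outer approximation produced there, a nearby facet of $P_{[n]}$ itself, and then verifying that the resulting intersection still approximates $P_{[n]}$ at the same rate (up to a dimensional constant).

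First, apply Lemma~\ref{lem:RSW} to $P_{[n]}$ with parameter $m=k$, which is legitimate as soon as one takes $\Cr{RSW2}\ge\Cr{RSW1}^{(d-1)/2}$. This produces an outer polytope $Q=\bigcap_{j=1}^m (H'_j)^-\supset P_{[n]}$, with at most $k$ facets $F_1,\ldots,F_m$ of outer normals $u'_j$, such that
\[
  d_H(P_{[n]},Q)\le \delta := \Cr{RSW1}\,c_\Phi\,\Phi(P_{[n]})\,k^{-2/(d-1)}.
\]
For each $j$, pick a point $v_j\in F_j$ maximizing the distance to $P_{[n]}$, let $y_j:=\Pi_{P_{[n]}}(v_j)\in\partial P_{[n]}$, so that $|v_j-y_j|\le\delta$, and choose a facet of $P_{[n]}$ containing $y_j$; call it $H_{i_j}$. (If $F_j$ already meets $P_{[n]}$, pick any facet of $P_{[n]}$ sharing a boundary point with $F_j$.) Setting $I:=\{i_1,\ldots,i_m\}$ gives $|I|\le m\le k$, and trivially $P_{[n]}\subseteq P_I$.

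It then remains to bound $\sup_{x\in P_I}d(x,P_{[n]})$. Following the template of Lemma~4.3 in \cite{ReisnerSchuttWerner01}, the idea is that each chosen facet $H_{i_j}$ is a supporting hyperplane of $P_{[n]}$ touching at the point $y_j$, which lies within $\delta$ of the facet $F_j$ of the outer approximation $Q$; hence the cap of $P_{[n]}$ bounded by $H_{i_j}$ is geometrically comparable (in width) to the cap bounded by $H'_j$, which has width at most $\delta$. Using the simplicity of $P_{[n]}$ to ensure that the normal cone at generic boundary points is a single ray, one can translate the fact that the facets of $Q$ cover all outer-extension directions within error $\delta$ into the statement that the chosen facets $H_{i_j}$ do the same within error at most $C\delta$ for a dimensional constant $C$. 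This yields
\[
  d_H(P_{[n]},P_I)\le C\delta =\Cr{RSW3}\,c_\Phi\,\Phi(P_{[n]})\,k^{-2/(d-1)}.
\]

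\textbf{Main obstacle.} The orientations $u_{i_j}$ and $u'_j$ can differ substantially, so a direct support-function comparison between $P_I$ and $Q$ is unavailable. The heart of the argument is to use the smallness of the caps of $P_{[n]}$ cut off by the $H'_j$ (of width $\le\delta$) together with simplicity to show that the caps of $P_{[n]}$ cut off by the chosen $H_{i_j}$ are only a constant factor larger, thereby transferring the $\delta$-approximation by $Q$ into a $C\delta$-approximation by $P_I$.
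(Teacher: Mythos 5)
The paper's proof of this lemma rests on a simple but crucial containment: after applying Lemma~\ref{lem:RSW} with $m=\lfloor k/d\rfloor$, one perturbs $Q$ so that each of its facets contains exactly one vertex of $P_{[n]}$ in its (relative) interior, and then lets $I$ consist of \emph{all} $d$ facets of $P_{[n]}$ incident to each such vertex. Because the outer normal of a facet of $Q$ lies in the normal cone of $P_{[n]}$ at the vertex it touches, and that normal cone is the conical hull of the $d$ facet normals, one gets the sandwich $P_{[n]}\subset P_I\subset Q$, whence $d_H(P_{[n]},P_I)\le d_H(P_{[n]},Q)$ at once. Simplicity of $P_{[n]}$ enters only through the count $|I|\le d\,f(Q)\le k$.

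Your proposal diverges at exactly this point, and the divergence leaves a genuine gap. You pick only \emph{one} facet $H_{i_j}$ of $P_{[n]}$ per facet $F_j$ of $Q$ (a facet containing the foot $y_j$ of the projection of a far point of $F_j$). With this choice $P_I\subset Q$ can fail: the single hyperplane $H_{i_j}$ need not lie inside $(H'_j)^-$, and in general the normal $u_{i_j}$ is not a nonnegative multiple of $u'_j$, so you have no a priori control on $P_I$. In fact $P_I$ may be far larger than $Q$ (it is not even clear a priori that $P_I$ is bounded), and the Hausdorff bound you assert does not follow from what is written. You flag this yourself as the ``main obstacle,'' but the paragraph that is meant to resolve it is heuristic: the claim that the caps of $P_{[n]}$ cut off by the chosen $H_{i_j}$ are ``only a constant factor larger'' than those cut off by $H'_j$ is not proved and is not obviously true — the width of a cap can blow up as the dihedral angle between $H_{i_j}$ and the tangent plane at $y_j$ becomes small, which simplicity does not prevent. (Simplicity controls the \emph{number} of facets at a vertex, not the angles; the paper only uses it for the facet count.)

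To repair the argument with the least change to your plan, replace the ``one facet per contact point'' selection by the paper's ``all $d$ facets at a vertex'' selection: perturb the facets of $Q$ so that each meets exactly one vertex of $P_{[n]}$ in its interior (possible because the contact set lies on $\partial P_{[n]}$), take all facets of $P_{[n]}$ through those vertices, and then the normal-cone inclusion gives $P_I\subset Q$ directly, removing the obstacle altogether. To keep $|I|\le k$ one must then apply Lemma~\ref{lem:RSW} with $m=\lfloor k/d\rfloor$ rather than $m=k$, which is exactly why the paper sets $\Cr{RSW2}=d(\Cr{RSW1}+1)$ and absorbs the extra factor $d^{2/(d-1)}$ into $\Cr{RSW3}$.
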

\begin{proof}  
  We set $ \Cr{RSW2} := d(\Cr{RSW1}+1)$ where $ \Cr{RSW1} $ is the constant of Lemma \ref{lem:RSW}.
  We apply Lemma \ref{lem:RSW} to $P_{[n]}$ and $m=\lfloor k/d \rfloor > \Cr{RSW1}^{(d-1)/2}$.
  We obtain a polytope $Q\supset P_{[n]}$ with $\lfloor k/d \rfloor$ facets and 
  \[
    \mathrm{d}_H(P_{[n]},Q)
    < \Cr{RSW1} c_\Phi \Phi(P_{[n]}) \left\lfloor \frac kd \right\rfloor ^{- \frac 2{d-1}}   < \Cr{RSW3} c_\Phi \Phi(P_{[n]}) k ^{- \frac 2{d-1}}. 
  \]
  By eventually shifting and rotating the facets of $Q$ slightly, we can assume that each of the facets of $Q$ meets exactly one vertex of $P_{[n]}$ in its interior.
  Let $I$ be the set of indices of facets of $P_{[n]}$ with one vertex in a facet of $Q$.
  Since $P_{[n]}$ is simple, we have 
  \[
    |I| \leq d\, f(Q) = d \left\lfloor \frac kd \right\rfloor \leq k.
  \]
  Finally, we observe that $ P_{[n]} \subset P_I \subset Q $, which implies $ d_H ( P_{[n]} , P_I ) \leq d_H ( P_{[n]} , Q )$.
\end{proof}

The crucial step is to prove that also the $\Phi$-content between $P_{[n]}$ and $P_{[n]\setminus\{j\}} $ is almost the same for $j \in I$.
\begin{lemma}
  \label{lem:RSW3}
  There exist constants $\Cl[univ]{RSW4}$ and $\Cl[univ]{RSW5}$ such that the following holds.
  For any  $ n > \Cr{RSW4} $ and any simple polytope $ P_{[n]} = \cap_{i=1}^n H_i^- $ with $ n $ facets, there exists a subset $ J \subset [ n ] $ of cardinality at least $ n / 4 $ such that for any $ j \in J $ we have
  \begin{equation}
    \label{eq:dHdiff1}
    d_H \left( P_{[n]} , P_{[n]\setminus\{j\}} \right) 
    < \Cr{RSW5} c_\Phi \Phi(P_{[n]}) n^{-\frac 2{d-1}} ,
  \end{equation}
  and 
  \begin{equation}
    \label{eq:Phidiff1}
    \Phi \left( P_{[n]\setminus\{j\}} \right) 
    < \exp \left\{  \Cr{RSW5} c_\Phi n^{-\frac {d+1}{d-1}}\right\} \Phi(P_{[n]}).
  \end{equation}
\end{lemma}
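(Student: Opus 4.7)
The plan is to combine an application of Lemma \ref{lem:RSW2} with an averaging/pigeonhole argument. First, apply Lemma \ref{lem:RSW2} with $k = \lceil n/2 \rceil$ (requiring $n$ large enough that $k > \Cr{RSW2}$; this pins down $\Cr{RSW4}$) to obtain a subset $I \subset [n]$ with $|I| \leq n/2$ and
\[
d_H(P_{[n]}, P_I) < \Cr{RSW3} c_\Phi \Phi(P_{[n]}) \lceil n/2 \rceil^{-2/(d-1)} \leq C c_\Phi \Phi(P_{[n]}) n^{-2/(d-1)}
\]
for a suitable dimensional constant $C$. Set $J_0 := [n] \setminus I$, so $|J_0| \geq n/2$. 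For any $j \in J_0$, the inclusion $I \subset [n] \setminus \{j\}$ yields the sandwich $P_{[n]} \subset P_{[n]\setminus\{j\}} \subset P_I$, which immediately gives \eqref{eq:dHdiff1} with $r := C c_\Phi \Phi(P_{[n]}) n^{-2/(d-1)}$.

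The tighter bound \eqref{eq:Phidiff1} requires an extra factor of $n^{-1}$ compared with the naive estimate $\Phi(P_{[n]\setminus\{j\}}) - \Phi(P_{[n]}) \leq d_H(P_{[n]\setminus\{j\}}, P_{[n]}) \leq r$ (obtained by integrating the Lipschitz bound $|h(K,u)-h(L,u)| \leq d_H(K,L)$ against the probability measure $\varphi$). This gain is produced by averaging over $j \in J_0$ and keeping only the ``good'' half. The central geometric observation is that the spherical sets
\[
A_j := \{u \in S^{d-1} : h(P_{[n]\setminus\{j\}}, u) > h(P_{[n]}, u)\}, \quad j \in [n],
\]
have bounded overlap in $j$: for a.e.\ direction $u$, at most $d$ of the sets $A_j$ contain $u$.

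To prove this sub-claim, note that for generic $u$ the supporting hyperplane of $P_{[n]}$ in direction $u$ touches $P_{[n]}$ at a unique vertex $v_u$, which by simplicity lies on exactly $d$ facets of $P_{[n]}$. If $v_u \notin F_j$ then the $d$ facets incident to $v_u$ all survive in $P_{[n]\setminus\{j\}}$, so $v_u$ is still a vertex of $P_{[n]\setminus\{j\}}$ with identical tangent cone; hence $v_u$ remains the argmax of $u$ and $u \notin A_j$. Integrating against $\varphi$ gives $\sum_{j=1}^n \varphi(A_j) \leq d$. Since for $j \in J_0$ the difference $h(P_{[n]\setminus\{j\}}, u) - h(P_{[n]}, u)$ is bounded above by $r$ on $A_j$ and vanishes outside $A_j$, one obtains the aggregate estimate
\[
\sum_{j \in J_0} \bigl(\Phi(P_{[n]\setminus\{j\}}) - \Phi(P_{[n]})\bigr) \leq r \sum_{j \in J_0} \varphi(A_j) \leq dr.
\]

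A Markov/pigeonhole argument then selects a subset $J \subset J_0$ of cardinality at least $|J_0|/2 \geq n/4$ on which $\Phi(P_{[n]\setminus\{j\}}) - \Phi(P_{[n]}) \leq 2dr/|J_0| \leq 4dC c_\Phi \Phi(P_{[n]}) n^{-(d+1)/(d-1)}$. The multiplicative form \eqref{eq:Phidiff1} follows from $1+x \leq e^x$, and since $J \subset J_0$ the Hausdorff bound \eqref{eq:dHdiff1} is inherited automatically. The main technical obstacle is the bounded-overlap property for the $A_j$; once that is in place, the rest is a clean combination of Lemma \ref{lem:RSW2}, the support-function Lipschitz estimate, and a Markov inequality.
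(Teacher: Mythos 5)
Your proof is correct and follows essentially the same strategy as the paper: apply Lemma \ref{lem:RSW2} to peel off a small index set $I$, deduce the Hausdorff bound for all $j\notin I$ by the sandwich $P_{[n]}\subset P_{[n]\setminus\{j\}}\subset P_I$, observe via normal cones that $\sum_j\varphi(A_j)\le d$ by simplicity, and conclude by pigeonhole/Markov. The only (cosmetic) slip is the parameter bookkeeping: with $k=\lceil n/2\rceil$ Lemma \ref{lem:RSW2} gives $|I|\le\lceil n/2\rceil$, so $|J_0|\ge\lfloor n/2\rfloor$ and the Markov step yields $|J|\ge\lfloor n/2\rfloor/2$, which drops just below $n/4$ when $n$ is odd; the paper avoids this by the slightly different choice $k=n-2\lceil n/4\rceil$, and you could fix it the same way or by taking $k=\lfloor n/2\rfloor$.
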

\begin{proof}
  The first part is just a suitable reformulation of Lemma \ref{lem:RSW2}.
  Set $\Cr{RSW4} := 2 \Cr{RSW2} + 4$, and put $ k = n - 2 \lceil n/4 \rceil $ which implies $k \geq \Cr{RSW2}$.
  By Lemma \ref{lem:RSW2} there is a set $ I \subset \{ 1 , \ldots , n \} $ of cardinality $ k $ such that 
  \begin{equation*}
    d_H ( P_{[n]} , P_I )
    < \Cr{RSW3} c_\Phi  \Phi(P_{[n]}) k^{ - \frac 2{d - 1} }     
    \leq (4d)^{-1} \Cr{RSW5} c_\Phi  \Phi(P_{[n]}) n^{-\frac 2{d-1}}
  \end{equation*}
  for a suitable constant $\Cr{RSW5}  $ by the definition of $k$. 
  Hence for any $j \notin I$,
  \begin{equation}
    \label{eq:RSW2proof}
    d_H \left( P_{[n]} , P_{[n]\setminus\{j\}} \right) 
    \leq d_H \left( P_{[n]} , P_I \right) 
    < (4d)^{-1} \Cr{RSW5}  c_\Phi \Phi(P_{[n]}) n^{-\frac 2{d-1}}
  \end{equation}
  which gives \eqref{eq:dHdiff1}.
  It remains to show that, for at least half of the $j$ not in $I$, equation $(\ref{eq:Phidiff1})$ holds as well.
  Set $\delta = (4d)^{-1} \Cr{RSW5}  c_\Phi \Phi(P_{[n]})  n^{-2/(d-1)}$ and
  \[
    U_j = {\rm cl} \{  \vect{u}  \in \S^{d-1}\colon h(P_{ [n]\setminus\{j\} },  \vect{u} ) \neq h(P_{[n]},  \vect{u} ) \} . 
  \]
  Equation $\eqref{eq:RSW2proof}$ implies that, for any $ j \notin I $ and $  \vect{u}  \in \S^{d-1} $, we have
  \[
    0
    \leq h(P_{ [n]\setminus\{j\} },  \vect{u} ) - h(P_{[n]}, \vect{u} ) 
    \leq \delta \1 (  \vect{u}  \in U_j ).  
  \]
  Therefore,
  \begin{align}
    \notag
    \Phi(P_{ [n]\setminus\{j\} }) - \Phi(P_{[n]}) 
    &= \int\limits_{\S^{d-1}} h(P_{[n]\setminus\{j\}} , \vect{u}) - h(P_{[n]} , \vect{u}) \dint \varphi (\vect{u} )\nonumber
    \\&< \label{eq:PhidiffS} 
    \int\limits_{U_j} \delta \dint \varphi (\vect{u} )
    = \delta \varphi(U_j) .
  \end{align}
  We need to estimate the $\varphi$-measure of the set $ U_j $.
  Denote by $\vect{v}_1, \dots \vect{v}_m$ the vertices of the polytope $P$.
  Since the polytope is simple, each vertex is the intersection of precisely $d$ hyperplanes. 
  Denote by $N ( \vect{v}_l )$ the unit vectors in the normal cone of $P$ at $ \vect{v}_l $, i.e.
  \[
    N(\vect{v}_l ) = \{  \vect{u}  \in \S^{d-1}:\ h(P_{[n]},  \vect{u} ) = \vect{v}_l \cdot  \vect{u}  \} . 
  \]
  The essential observation is that
  \[
    U_j = \bigcup_{ \vect{v}_l \in H_j} N( \vect{v}_l ) . 
  \]
  Observe that the sets $N( \vect{v}_l )$ have pairwise disjoint interiors and cover $\S^{d-1}$.
  Thus for almost all $ \vect{u} \in \S^{d-1} $ we have
  \begin{align*}
    \sum_{j=1}^n \1( \vect{u}  \in U_j) 
    &=
    \sum_{j=1}^n \sum_{l=1}^m \1( \vect{v}_l \in H_j) \1( \vect{u}  \in N( \vect{v}_l)) 
    \\&=
    \underbrace{\sum_{l=1}^m  \1( \vect{u}  \in N( \vect{v}_l))}_{=1} \underbrace{ \sum_{j=1}^n  \1( \vect{v}_l \in H_j)}_{=d}
    = d 
  \end{align*}
  This yields $ \sum_{j=1}^n \varphi (U_j)  = d $ and in particular
  \[
    \sum_{j\notin I} \varphi ( U_j)  \leq d  . 
  \]
  This implies that, for at least half of the $j\notin I$, we have
  \[
    \varphi(U_j) \leq d \left(\frac{n-k}{2}\right)^{-1} =  d \left\lceil \frac n4 \right\rceil ^{-1} \leq 4 d n^{-1} .
  \]
  Otherwise we would have at least half of the $j\notin I$ with the reverse inequality and, because $|I|=k = n - 2 \lceil n/4 \rceil$, that would imply 
  \[
    d  \geq \sum_{j\notin I}^n \varphi (U_j) > \frac12 (n-k) \frac{2d}{n-k} = d .
  \]
  Combined with equation $(\ref{eq:PhidiffS})$, it shows that there exists a set $J\subset \{1,\ldots,n\}\setminus I$ of cardinality $(n-k)/2 = \lceil n/4 \rceil$ such that, for any $j\in J$, we have
  \[
    \Phi(P_{[n]\setminus\{j\}})-\Phi(P_{[n]}) 
    < 4d \delta  n^{-1} 
    = \Cr{RSW5}  c_\Phi  n^{-\frac {d+1}{d-1}} \Phi(P_{[n]}) .
  \]
  This implies equation $(\ref{eq:Phidiff1})$.
\end{proof}

\subsection{Approximation with elongation condition}
The starting point of the following considerations is Theorem 1.1 of \cite{Bonnet16}.
For $1\leq i < j \leq d $ and $\epsilon>0$, we say that a convex body $K$ is $(\epsilon \colon i,j)$-elongated when $V_j(K)^{1/j} V_i(K)^{-1/i}<\epsilon$.
When a convex body, or more specifically a polytope, is sufficiently elongated, the approximation results of the previous subsection can be improved.

\begin{lemma}
  \label{lem:approxElong}
  Assume that $1\leq i < j \leq \lceil (d-1)/2 \rceil$.
  There exist positive constants $\Cl[univ]{elong1}$ and $\Cl[univ]{elong2}$, both depending on $i$, $j$ and $d$, such that the following holds.
  For any $\epsilon>0$, any integer $ k \geq \lfloor \Cr{elong1} \epsilon^{-(d-2)} \rfloor$ and any simple polytope $ P_{[n]} = \cap_{i=1}^n H_i^{\epsilon_i} \in \PPn $ with $n \geq  k$ facets and $ V_j(P_{[n]})^{1/j} V_i(P_{[n]})^{-1/i} < \epsilon$, there exists a subset $ J \subset [n] $ with $ |J| \leq k $, such that
  \[
    d_H ( P_{[n]} , P_{J} ) 
    < \Cr{elong2} \epsilon^{\frac 1{2d}} V_1(P_{[n]}) k^{-\frac 2{d-1}} .
  \]
\end{lemma}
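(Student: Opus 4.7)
The plan is to follow the template of the proof of Lemma \ref{lem:RSW2}, but to replace the input approximation result (Lemma \ref{lem:RSW}, i.e.\ the general Macbeath-type bound) by the sharpened version available for $(\epsilon:i,j)$-elongated bodies, namely Theorem~1.1 of \cite{Bonnet16}. Under the hypothesis $1\leq i<j\leq\lceil(d-1)/2\rceil$ that theorem, applied to a body which is $(\epsilon:i,j)$-elongated and whose desired number of outer facets $m$ is at least a constant multiple of $\epsilon^{-(d-2)}$, upgrades the Macbeath bound by a factor $\epsilon^{1/(2d)}$ and allows the size functional on the right-hand side to be $V_1$ instead of the circumradius. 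Both features are exactly what the statement of the lemma promises.

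Concretely, the first step is to apply Theorem~1.1 of \cite{Bonnet16} to $P_{[n]}$ with $m:=\lfloor k/d\rfloor$. The hypothesis $k\geq \Cr{elong1}\epsilon^{-(d-2)}$ is chosen so that $m$ exceeds the threshold required by that theorem, yielding an outer polytope $Q\supset P_{[n]}$ with $m$ facets such that $d_H(P_{[n]},Q)$ is bounded by a constant (depending on $i,j,d$) times $\epsilon^{1/(2d)}V_1(P_{[n]})m^{-2/(d-1)}$.

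The second step is the simple-polytope reduction already used at the end of Lemma \ref{lem:RSW2}. After an arbitrarily small shift and rotation of its facet hyperplanes, which does not worsen the Hausdorff bound quantitatively, each facet of $Q$ meets exactly one vertex of $P_{[n]}$ in its interior. Let $J\subset[n]$ be the indices of those facets of $P_{[n]}$ that pass through one of these selected vertices. Since $P_{[n]}$ is simple, every vertex lies in exactly $d$ facets, so $|J|\leq d\cdot f(Q)\leq k$. At each selected vertex $\vect v_l$, the outer normal of the corresponding facet of $Q$ lies in the normal cone of $P_{[n]}$ at $\vect v_l$, hence is a nonnegative combination of the $d$ outer normals of the facets of $P_{[n]}$ indexed by $J$ at $\vect v_l$; pairing this with support-function values shows that the halfspace of $Q$ associated to $\vect v_l$ contains $P_J$. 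Consequently $P_{[n]}\subset P_J\subset Q$, so $d_H(P_{[n]},P_J)\leq d_H(P_{[n]},Q)$, and absorbing the harmless factor $d^{2/(d-1)}$ produced by $m=\lfloor k/d\rfloor$ into the constant $\Cr{elong2}$ gives the stated estimate.

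The main obstacle is not combinatorial or even geometric in the small: essentially all of the content lies in Theorem~1.1 of \cite{Bonnet16}. The delicate point is to verify that, under the normalization of the present paper, its conclusion can really be read off as $\epsilon^{1/(2d)}V_1(P_{[n]}) m^{-2/(d-1)}$, and that the threshold $\Cr{elong1}\epsilon^{-(d-2)}$ on $k$ is exactly what is imposed by its hypotheses when applied to a polytope with $n\geq k$ facets. Once these identifications are pinned down, the rest is a verbatim adaptation of the argument used for Lemma \ref{lem:RSW2}.
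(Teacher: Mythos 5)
Your proposal is correct and matches the paper's proof essentially step for step: invoke the elongated-body approximation from \cite{Bonnet16} with $m=\lfloor k/d\rfloor$ facets, then run the same simple-polytope reduction as in Lemma~\ref{lem:RSW2} to extract $J\subset[n]$ with $|J|\leq d\,f(Q)\leq k$ and $P_{[n]}\subset P_J\subset Q$. The normal-cone justification you give for $P_J\subset Q$ is the correct reasoning and merely makes explicit what the paper leaves implicit.
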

\begin{proof}
  This is a useful application of a recent result by Bonnet \cite{Bonnet16}.
  Assume $ 1 \leq i< j \leq \lceil (d-1) /2 \rceil $.
  Then there exist constants $c_{i,j}$ and $n_{i,j}$ (both depending on $d$), such that the following holds.
  For any $ \epsilon>0 $, any $m \geq n_{i,j} \epsilon^{-(d-2)}$, and any convex body $K$ with
  \[
    \frac { V_j(K)^{\frac 1j} } { V_i(K)^{\frac 1i} } < \epsilon,
  \]
  there exists a polytope $Q \supset K$ with at most $m$ facets  satisfying
  \[
    d_H ( K , Q ) 
    < c_{i,j} \,\epsilon^{\frac 1{2d}}  \, V_1(K) \, m^{- \frac 2{d-1}} .
  \]
  Assume that $P_{[n]}$ is a simple polytope with isoperimetric ratio $ V_j(P_{[n]})^{1/j} V_i(P_{[n]})^{-1/i} < \epsilon$ and $f(P_{[n]}) =n >   k \geq dm $ facets with $ m = \lfloor k/d \rfloor > n_{i,j} \epsilon^{-(d-2)} $. 
  Then there exists a polytope $ Q \supset P_{[n]} $ with $ m+1 $ facets and
  \[
    d_H (P_{[n]},Q) 
    < c_{i,j}  \epsilon^{\frac 1{2d}} \, V_1(P_{[n]}) \, (m+1)^{- \frac 2{d-1}}   
    \leq d^{ \frac 2{d-1}}  c_{i,j} \epsilon^{\frac 1{2d}} \, V_1(P_{[n]}) \, k^{- \frac 2{d-1}}   .
  \]
  We can assume that each of the facets of $Q$ meets exactly one vertex of $P_{[n]}$ in its interior.
  Let $J$ be the set of indices of facets of $P_{[n]}$ with one vertex in a facet of $Q$.
  Since $P_{[n]}$ is simple, we have 
  \[
    |J| \leq d\, f(Q)
    \leq k .
  \]
  And $ P_J \subset Q $ implies 
  \[
    d_H ( P_{[n]} , P_J )
    \leq d_H ( P_{[n]} , Q ) .
  \]  
\end{proof}

In the following lemma we prove the uniform continuity of the isoperimetric ratio.
To our surprise we could not find any results in this direction, this seems to be an open problem.
We state the partial solution to this problem which we need for our purposes.
\begin{lemma}
  \label{lem:ContinuityIsop}
  Let $ 1 \leq i < j \leq d$.
  There exists a constant $\Cl[univ]{contisopratio}$ such that 
  for any  $ \delta \in (0,1) $ and for any $ K,L \in \KK $ with $K \subset L $ and $ d_H(K,L) < \delta V_1(K) $, we have
  \[
    \frac{ V_j(L)^{\frac 1j}} {V_i(L)^{\frac 1i}} < \frac{V_j(K)^{\frac 1j}} {V_i(K)^{\frac 1i}} + \Cr{contisopratio} \delta^{\frac{j-i}{ij(j-1)}} .
  \]
\end{lemma}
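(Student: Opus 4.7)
The plan is to exploit scale invariance together with Steiner's formula. Both the hypothesis $d_H(K,L)<\delta V_1(K)$ and the conclusion are homogeneous of degree $0$ in $K$, so I normalize by setting $V_1(K)=1$ and thereby reduce matters to bounding $\rho(L)-\rho(K)$ when $d_H(K,L)<\delta$. From $L\subset K+\delta B^d$, Steiner's formula \eqref{eq:steinerintrinsic} combined with the isoperimetric bounds $V_{j-\ell}(K)\le c\, V_1(K)^{j-\ell}=c$ coming from \eqref{eq:isopinequ} yields
\[
V_j(L)-V_j(K) \le \sum_{\ell=1}^{j}\binom{d-j+\ell}{\ell}\frac{\kappa_{d-j+\ell}}{\kappa_{d-j}}\,V_{j-\ell}(K)\,\delta^{\ell}\le c\,\delta.
\]
Decomposing
\[
\rho(L)-\rho(K)=\frac{V_j(L)^{1/j}V_i(K)^{1/i}-V_j(K)^{1/j}V_i(L)^{1/i}}{V_i(L)^{1/i}V_i(K)^{1/i}},
\]
using $V_i(L)\ge V_i(K)$ to discard a non-positive term, and then applying subadditivity $(a+b)^{1/j}\le a^{1/j}+b^{1/j}$ produces the working estimate
\[
\rho(L)-\rho(K)\le \frac{c\,\delta^{1/j}}{V_i(L)^{1/i}}.
\]

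Next I split into two cases depending on whether $V_i(L)^{1/i}\ge \delta^{(i-1)/(i(j-1))}$ or not. In the first case, a direct substitution together with the algebraic identity $\tfrac{1}{j}-\tfrac{i-1}{i(j-1)}=\tfrac{j-i}{ij(j-1)}$ gives the desired bound immediately. In the second case $V_i(L)$ is tiny, but by the isoperimetric inequality applied to $L$ itself we have $V_j(L)^{1/j}\le \tfrac{\kappa_j^{1/j}}{\kappa_i^{1/i}}V_i(L)^{1/i}$, so $V_j(L)^{1/j}$ is small of the same order in $\delta$, and hence so is $V_j(K)^{1/j}\le V_j(L)^{1/j}$. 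I then combine these smallness estimates with the Steiner estimate and the trivial isoperimetric bound $\rho(L)\le \kappa_j^{1/j}/\kappa_i^{1/i}$ by interpolation to again conclude $\rho(L)-\rho(K)\le c\,\delta^{(j-i)/(ij(j-1))}$.

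The hard part of the argument will be this second case: the direct Steiner estimate becomes useless once $V_i(L)^{1/i}$ is below the threshold $\delta^{(i-1)/(i(j-1))}$, and one must extract the required decay from the isoperimetric inequality on $L$ itself, propagating smallness from $V_i(L)$ through to $V_j(L)$ and, combined with $V_i(L)\ge V_i(K)$, to $\rho(L)-\rho(K)$. The somewhat unusual exponent $(j-i)/(ij(j-1))$ arises precisely from balancing the Steiner bound against this isoperimetric threshold, and the inability to do better reflects the fact that the subadditivity step $(V_j(L)-V_j(K))^{1/j}$ is far from sharp when $K$ is close to being lower-dimensional.
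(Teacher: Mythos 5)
Your Case 1, and the overall structure (normalize by $V_1(K)=1$, reduce to $\rho(L)-\rho(K)\le c\delta^{1/j}/V_i(L)^{1/i}$ via Steiner and subadditivity, split on whether $V_i(L)^{1/i}$ is above or below the threshold $\delta^{(i-1)/(i(j-1))}$) track the paper's proof precisely, just phrased as an explicit case distinction rather than the paper's $\min$ trick. Case 1 is correct.

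The gap is in Case 2. The isoperimetric inequality gives only the constant bound $\rho(L)\le \kappa_j^{1/j}/\kappa_i^{1/i}$; it does \emph{not} say that $\rho(L)$ is small when $V_i(L)$ is small, and no interpolation between the Steiner estimate $c\delta^{1/j}/V_i(L)^{1/i}$ (which blows up in this regime) and a constant can produce decay in $\delta$. The issue is precisely that both $V_j(L)^{1/j}$ and $V_i(L)^{1/i}$ can be small simultaneously with their ratio of order one: knowing the numerator is $O(\delta^{(i-1)/(i(j-1))})$ tells you nothing about the quotient. What the paper uses in this regime is the Alexandrov--Fenchel inequality
\[
  V_i(L)^{j-1} \ge c_{ij}\, V_1(L)^{j-i} V_j(L)^{i-1},
\]
which (for $i\ge 2$) yields
\[
  \frac{V_j(L)^{1/j}}{V_i(L)^{1/i}} \le c \left(\frac{V_i(L)^{1/i}}{V_1(L)}\right)^{\frac{j-i}{j(i-1)}},
\]
so that $\rho(L)$ itself is small, not merely bounded, once $V_i(L)^{1/i}/V_1(L)$ is small; substituting $V_i(L)^{1/i}<\delta^{(i-1)}{/(i(j-1))}$ and $V_1(L)\ge 1$ gives exactly the required $c\,\delta^{(j-i)/(ij(j-1))}$, and then $\rho(L)-\rho(K)\le \rho(L)$. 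Your sketch explicitly flags Case 2 as "the hard part" but does not supply this ingredient; the isoperimetric inequality and monotonicity of intrinsic volumes are strictly weaker and do not close it. (For $i=1$ there is no Case 2 at all since $V_1(L)\ge V_1(K)=1\ge\delta^0$, which is why the issue does not arise there.)
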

\begin{proof}
  A first easy bound is obtained using $ V_i(L)^{j-1} \geq c_{ij} V_1(L)^{j-i} V_j(L)^{i-1}$ which is a consequence of the Alexandrov-Fenchel inequality, see \cite{Schn42nd}, p.401, (7.66) therein.
  \begin{equation}
    \label{eq:AF-inequ}
    \frac {V_j(L)^{\frac 1j}}{V_i(L)^{\frac 1i}} 
    \leq \Cl[univ]{AFinequ} \ \left(\frac{V_i(L)^{\frac 1i}}{V_1(L)} \right)^{\frac{j-i}{j(i-1)}} 
    < \frac{V_j(K)^{\frac 1j} }{ V_i(K)^{\frac 1i}} +
    \Cr{AFinequ} \ \left(\frac{V_i(L)^{\frac 1i}}{V_1(L)} \right)^{\frac{j-i}{j(i-1)}} 
  \end{equation}
  A more precise bound uses Steiner's formula. 
  Due to the isoperimetric inequality \eqref{eq:isopinequ}, $V_i(K)^{1/i} \leq c_{1,i} V_1(K)$ with $c_{1,i} :=  V_i(\B)^{1/i} V_1(\B)$.
  Since 
  \[
    d_H(K,L) < \delta V_1(K) , 
  \]
  we have that $L \subset K + \delta V_1(K) \B$.
  The monotonicity of the intrinsic volumes and Steiner's formula \eqref{eq:steinerintrinsic} shows for $\delta \leq 1$
  \begin{align*}
    V_j (L) 
    &< V_j \left( K + \delta V_1(K)\B \right)
    \\&\leq
    V_j(K) + \sum_{i=1}^j  \binom{d-j+i}{i} \frac{\kappa_{d-j+i}}{\kappa_{d-j}}  c_{1,j-i}^{j-i} V_1(K)^{j-i}(\delta V_1(K))^i  
    \\&\leq
    V_j(K) + \delta V_1(K)^j \sum_{i=1}^j  \binom{d-j+i}{i} \frac{\kappa_{d-j+i}}{\kappa_{d-j}}  c_{1,j-i}^{j-i}
    \\&\leq
    V_j(K) + \Cl[univ]{steinerphi}  \ \delta V_1(L)^j .
  \end{align*}
  Because $ a+b  \leq (a^{\frac 1j}+ b^{\frac 1j})^j $ for $a,b>0$, and because of the monotonicity of the intrinsic volumes this yields
  \begin{equation}
    \label{eq:steiner1}
    \frac{V_j(L)^{\frac 1j}}{V_i(L)^{\frac 1i}}
    \leq 
    \frac{V_j(K)^{\frac 1j} }{ V_i(K)^{\frac 1i}} +
    \Cr{steinerphi}^{\frac 1j}  \delta^{\frac 1j} \frac{V_1(L) }{ V_i(L)^{\frac 1i}} .
  \end{equation}
  Note that $\min \{x, x^{- (j-i)/(j(i-1))} \} \leq 1$ for all $x >0$.
  We define $\Cr{contisopratio}=\max\{\Cr{AFinequ} ,\Cr{steinerphi}^{1/j} \} $ and combine \eqref{eq:AF-inequ} and \eqref{eq:steiner1}.
  \begin{align*}
    \frac{V_j(L)^{\frac 1j}}{V_i(L)^{\frac 1i}}
    &\leq
    \frac{V_j(K)^{\frac 1j} }{ V_i(K)^{\frac 1i}} +   
    \delta^{\frac{j-i}{ij(j-1)}}
    \min\left\{ \Cr{steinerphi}^{\frac 1j}  \delta^{\frac{i-1}{i(j-1)}} \frac{V_1(L) }{ V_i(L)^{\frac 1i}},\ 
    \Cr{AFinequ} \left(\delta^{\frac{i-1}{i(j-1)}} \frac{V_1(L)}{V_i(L)^{\frac 1i} } \right)^{-\frac{j-i}{j(i-1)}}  \right\}
    \\&\leq
    \frac{V_j(K)^{\frac 1j} }{ V_i(K)^{\frac 1i}} +
    \Cr{contisopratio}   
    \delta^{\frac{j-i}{ij(j-1)}}
  \end{align*}
\end{proof}

Recall that we use the notation $P_I = \cap_{i\in I} H_i^{\epsilon_i}$, for any set of integers $I$.
For integers $k\leq n$ and a permutation $\sigma\in\mathfrak{S}_n$ we write $\sigma [k] = \{ \sigma(i) :\  i\in[k] \}$.
In particular $P_{\sigma[k]} = \cap_{i\in I} H_{\sigma(i)}^{\epsilon_{\sigma(i)}}$.
We call hyperplanes $H_i$ in generic position, if the intersection of any $d+2$ of them is empty.
The constants $\Cr{elong1} $ and $ \Cr{elong2}$ have been defined in Lemma \ref{lem:approxElong}.
\begin{lemma}
  \label{lem:sequenceFacets}
  There is a constant $\Cl[univ]{sequfac}$ depending on $\varphi$ such that for all integers  $ i < j \leq \lceil (d-1)/2 \rceil$ and any 
  $ \epsilon  < \Cr{elong1}^{2/(d-1)}\Cr{elong2}^{-1} (c_{\Phi})^{-1}   $
  the following holds.
  For any polytope $ P_{[n]} \in \PPn $  with $n > m=\lfloor \Cr{elong1} \epsilon^{-(d-2)} \rfloor $ facets in generic position and $ V_j(P_{[n]})^{1/j} V_i(P_{[n]})^{-1/i} < \epsilon $ 
  there exist at least $2^{-n} (n-2m)!$ permutations $\sigma \in \mathfrak{S}_n$ such that 
  \begin{enumerate}[(1)]
    \item
      $ d_H ( P_{\sigma[k]} , P_{\sigma[k-1]} )  
      < \Cr{sequfac} c_{\Phi} \, \epsilon^{\frac 1{2d^4}} \, \Phi(P_{\sigma[m]}) k^{-\frac 2{d-1}}$
      for all $k = 2m+1, \dots , n$,
    \item
      $ \| \cent(P_{\sigma[n]}) - \cent(P_{\sigma[m]}) \| < \Phi(P_{\sigma[n]}) $, and
    \item
      $ \Phi(P_{\sigma[m]}) < 2 \Phi(P_{\sigma[n]}) $.
  \end{enumerate}
\end{lemma}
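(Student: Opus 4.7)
The plan is to construct $\sigma$ by a greedy backward peeling. At each step $l = n, n-1, \ldots, m+1$, we select $\sigma(l) \in \sigma[l]$ so that removing the corresponding halfspace produces a controlled change in both Hausdorff distance and $\Phi$-content; the remaining $m$ indices are then labelled $\sigma(1), \ldots, \sigma(m)$ in arbitrary order, contributing a factor $m!$ to the count. At step $l$, the polytope $P_{\sigma[l]}$ has $l$ facets, is simple by the generic position hypothesis, and, by induction, satisfies an elongation bound $V_j(P_{\sigma[l]})^{1/j}V_i(P_{\sigma[l]})^{-1/i} < 2\epsilon$. Lemma \ref{lem:approxElong} applied to $P_{\sigma[l]}$ with target size $m$ yields a set $J_l \subset \sigma[l]$ with $|J_l| \leq m$ and $d_H(P_{\sigma[l]}, P_{J_l}) < \Cr{elong2}\,\epsilon^{1/(2d)}\,V_1(P_{\sigma[l]})\,l^{-2/(d-1)}$; for every $j \in \sigma[l] \setminus J_l$, the inclusions $P_{\sigma[l]} \subset P_{\sigma[l]\setminus\{j\}} \subset P_{J_l}$ pass the same Hausdorff bound to the one-facet removal, giving $l - m$ candidate indices.

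To additionally control $\Phi$, adapt the argument from Lemma \ref{lem:RSW3}: setting $U_j^{(l)} := \mathrm{cl}\{u \in \S^{d-1} : h(P_{\sigma[l]\setminus\{j\}}, u) \neq h(P_{\sigma[l]}, u)\}$, simplicity of $P_{\sigma[l]}$ ensures $\sum_{j \in \sigma[l]} \varphi(U_j^{(l)}) = d$. Hence at least half of the indices in $\sigma[l] \setminus J_l$---forming a set $G_l$ with $|G_l| \geq (l-m)/2$---also satisfy $\varphi(U_j^{(l)}) \leq 4d/(l-m)$. Picking $\sigma(l) \in G_l$ and combining with $V_1 \leq c_\Phi \Phi$ yields the multiplicative step $\Phi(P_{\sigma[l-1]}) \leq \Phi(P_{\sigma[l]})(1 + c\,\epsilon^{1/(2d)}\,l^{-(d+1)/(d-1)})$. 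Since $\sum_{l \geq m+1} l^{-(d+1)/(d-1)}$ converges, telescoping yields $\Phi(P_{\sigma[m]}) \leq \Phi(P_{\sigma[n]}) \exp(c\,\epsilon^{1/(2d)}) \leq 2\Phi(P_{\sigma[n]})$ for $\epsilon$ small, proving (3).

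For the elongation invariant, Lemma \ref{lem:ContinuityIsop} bounds the per-step increase of the isoperimetric ratio by a H\"older power of the relative Hausdorff shift; the hypothesis $\epsilon < \Cr{elong1}^{2/(d-1)}\Cr{elong2}^{-1}c_\Phi^{-1}$ is calibrated so that the telescoped drift keeps the ratio below $2\epsilon$ throughout the iteration, preserving the applicability of Lemma \ref{lem:approxElong}. Composing the H\"older exponent $(j-i)/(ij(j-1)) \geq 1/d^3$ with the $\epsilon^{1/(2d)}$ factor produces the announced $\epsilon^{1/(2d^4)}$ in condition (1); condition (1) itself then follows from the Hausdorff bound together with the monotonicity $V_1(P_{\sigma[l]}) \leq c_\Phi \Phi(P_{\sigma[l]}) \leq c_\Phi \Phi(P_{\sigma[m]})$ under peeling. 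Condition (2) follows from the 1-Lipschitz property of $\cent$ together with the inclusion $\cent(P_{\sigma[n]}), \cent(P_{\sigma[m]}) \in P_{\sigma[m]}$, which bounds the distance by $\mathrm{diam}(P_{\sigma[m]}) \lesssim V_1(P_{\sigma[m]}) \leq 2 c_\Phi \Phi(P_{\sigma[n]})$, the remaining constant being absorbed into $\Cr{sequfac}$ and a further shrinkage of $\epsilon$. Finally, the count of admissible permutations is at least $\prod_{l=m+1}^n |G_l| \cdot m! \geq (n-m)!\,m!/2^{n-m} \geq (n-2m)!/2^n$.

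The main obstacle is the simultaneous propagation of the elongation and $\Phi$-content invariants over $n-m$ iterations: Lemma \ref{lem:ContinuityIsop} provides only a weak H\"older continuity for the isoperimetric ratio, so one must carefully track the telescoped Hausdorff drift using the tail-summability of $l^{-(d+1)/(d-1)}$ for $\Phi$, together with the precise calibration of $\epsilon$ which ensures that the isoperimetric ratio never escapes the regime where Lemma \ref{lem:approxElong} remains applicable.
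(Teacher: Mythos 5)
Your backward-peeling construction is close in spirit to the paper's argument but differs in one crucial structural choice, and this difference creates several genuine gaps.

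The paper \emph{fixes} the anchor set $I$ at the very start (with $|I| = m$, obtained by one application of Lemma~\ref{lem:approxElong} to $P_{[n]}$), and then \emph{requires} $\sigma[m] = I$. The crucial consequence is that $I \subset \sigma[k] \subset [n]$ for every $k \geq m$ along the peeling, so the nesting $P_{[n]} \subset P_{\sigma[k]} \subset P_I$ gives
$d_H(P_{[n]}, P_{\sigma[k]}) \leq d_H(P_{[n]}, P_I) < \Cr{elong2}\,\epsilon^{1/(2d)}\, V_1(P_{[n]})\, m^{-2/(d-1)}$
\emph{uniformly in $k$}. Lemma~\ref{lem:ContinuityIsop} is then applied \emph{once} to conclude that every $P_{\sigma[k]}$ has isoperimetric ratio $< \Cr{elong3}\,\epsilon^{1/d^3}$, with no iteration. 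Your proof instead propagates the elongation bound inductively by adding up per-step drifts of size $\Cr{contisopratio}\,\delta_l^{(j-i)/(ij(j-1))}$ with $\delta_l \sim \epsilon^{1/(2d)} l^{-2/(d-1)}$. But the exponent on $l$ in $\sum_l \delta_l^{(j-i)/(ij(j-1))}$ is $\tfrac{2}{d-1}\cdot\tfrac{j-i}{ij(j-1)}$, which is strictly below $1$ as soon as $d \geq 4$ (and often for $d=3$ as well), so the series diverges and the drift \emph{does not} stay bounded by $\epsilon$ over $n-m$ steps. No calibration of $\epsilon$ repairs this, because the exponent of $l$ is independent of $\epsilon$. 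This is the main gap.

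There is a second, related inconsistency: you invoke Lemma~\ref{lem:approxElong} at step $l$ ``with target size $m$,'' yielding $|J_l| \leq m$, but then assert the Hausdorff bound $\Cr{elong2}\,\epsilon^{1/(2d)}V_1(P_{\sigma[l]})\,l^{-2/(d-1)}$. The lemma ties the cardinality $|J|\leq k$ and the factor $k^{-2/(d-1)}$ to the \emph{same} integer $k$; choosing $k=m$ gives the non-decaying factor $m^{-2/(d-1)}$, and choosing $k=l$ gives $|J_l|\leq l$ which kills the counting. The paper's choice is $k' = \lfloor l/2 \rfloor$, giving $|J_l| \leq l/2$ (hence at least $l/2 - m$ admissible next picks) and the decaying factor $(l/2)^{-2/(d-1)}$; this also forces the peeling to stop at $k = 2m+1$, not $m+1$.

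Finally, your justification of Condition~(2) via the diameter of $P_{\sigma[m]}$ does not close: $\|\cent(P_{\sigma[n]}) - \cent(P_{\sigma[m]})\| \leq \mathrm{diam}(P_{\sigma[m]}) \lesssim c_\Phi \Phi(P_{\sigma[n]})$ carries no factor of $\epsilon$, so it cannot be made $< \Phi(P_{\sigma[n]})$ for $c_\Phi$ large, and the constant cannot be ``absorbed into $\Cr{sequfac}$'' since (2) has no constant. The paper instead uses the $1$-Lipschitz property of $\cent$ together with the single uniform Hausdorff estimate
$d_H(P_{[n]}, P_{\sigma[m]}) < \Cr{elong1}^{-2/(d-1)}\Cr{elong2}\, c_\Phi\, \epsilon\, \Phi(P_{[n]})$,
which is $< \Phi(P_{[n]})$ precisely under the hypothesis $\epsilon < \Cr{elong1}^{2/(d-1)}\Cr{elong2}^{-1}c_\Phi^{-1}$; the $\epsilon$ in front is what the smallness hypothesis buys, and Condition~(3) follows from the same bound via $\Phi(P_{\sigma[m]}) < \Phi(P_{\sigma[n]} + \Phi(P_{\sigma[n]})B^d) < 2\Phi(P_{\sigma[n]})$. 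Your separate $\Phi$-telescoping is not needed once this single estimate is in place, and in any case depends on the Hausdorff bounds that you have not correctly established.
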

\begin{proof}
  We set 
  \[
    m =  \lfloor \Cr{elong1} \epsilon^{-(d-2)} \rfloor .
  \]
  By Lemma \ref{lem:approxElong} there exists a subset $ I \subset [n] $ with $ |I| =m $, such that for all subsets $J$ with 
  $ I \subset J \subset [n]$ we have
  \[
    d_H ( P_{[n]} , P_{J} ) 
    < \Cr{elong2} \epsilon^{\frac 1{2d}} V_1(P_{[n]}) m^{-\frac 2{d-1}} 
    < \Cr{elong1}^{-\frac 2{d-1}}\Cr{elong2} \epsilon \, V_1(P_{[n]}) .
  \]
  By Lemma \ref{lem:ContinuityIsop} this implies for all such sets $J$ that
  \begin{equation}
    \label{eq:unifisopratiobd}
    \frac{ V_j(P_J)^{\frac 1j}} {V_i(P_J)^{\frac 1i}} 
    < \e  + \Cr{contisopratio} (\Cr{elong1}^{-\frac 2{d-1}}\Cr{elong2} \epsilon)^{\frac{j-i}{ij(j-1)}} 
    < \Cl[univ]{elong3} \epsilon^{\frac 1{d^3}} .
  \end{equation}  
  We denote by $S(P_{[n]}) \subset \mathfrak{S}_n$ the set of those permutations $\sigma$ such that
  \begin{enumerate}[(a)]
   \item  $\sigma[m]=I$, and 
   \item $ d_H ( P_{\sigma[k]} , P_{\sigma[k-1]} )  
     < 2^{\frac 2{d-1}} \Cr{elong2} \Cr{elong3}^{\frac 1{2d}} \epsilon^{\frac 1{2d^4}} V_1(P_{\sigma[k]}) k^{-\frac 2{d-1}}$
     for all $k =n, \dots , 2m+1$.
  \end{enumerate}
  To estimate $|S(P_{[n]})|$ note first that there are $m!$ possibilities such that $\sigma[m]=I$. Second, assume that 
  $\sigma(n), \dots, \sigma(k+1) \in [n] \setminus I$ are already chosen satisfying Condition (b).
  Then by \eqref{eq:unifisopratiobd} and by Lemma \ref{lem:approxElong} applied to the polytope $P'=P_{\sigma[k]}$, the integer $k'=\frac{k}{2}\ge m$ and $\epsilon'=\Cr{elong3} \epsilon^{\frac 1{d^3}}$, there is a set $J_k \subset \sigma[k]$ of size $|J_k| \leq k/2$ such that
  \[
    d_H ( P_{\sigma[k]} , P_{J_k} ) 
    < \Cr{elong2} \Cr{elong3}^{\frac 1{2d}} \epsilon^{\frac 1{2d^4}} V_1(P_{\sigma[k]}) \left( \frac k2\right)^{-\frac 2{d-1}} .
  \]
  If we choose $\sigma(k) \notin J_k$, Condition (b) is thus satisfied. Because we need in addition $\sigma(k) \notin I$ there are at least $k/2 - m$ possibilities to choose $\sigma(k)$,
  and thus to determine $\sigma[k-1]$. 
  Continuing until $k=2m+1$ gives at least $\prod_{2m+1}^n (k/2-m) $ possibilities to choose $\sigma(n), \dots, \sigma(2m+1)$.
  We obtain
  \[
    |S(P_{[n]})| \geq m! \prod_{k=2m+1}^n \left(\frac k2 -m\right)
    = m! 2^{-n+2m} (n-2m)! > 2^{-n} (n-2m)!
  \]
  Using \eqref{eq:V1RPhi}, we observe that Condition (1) of our lemma is satisfied by choosing $\Cr{sequfac} = 2^{\frac 2{d-1}} \Cr{elong2} \Cr{elong3}^{\frac 1{2d}} $ in Condition (b).
  Condition (2) follows from  the 1-Lipschitz property of $\cent$ and
  \[
    d_H ( P_{[n]} , P_{\sigma[m]} ) 
    < \Cr{elong1}^{-\frac 2{d-1}}\Cr{elong2} c_{\Phi}  \Phi(P_{[n]})   \epsilon  < \Phi(P_{[n]}) ,
  \]
  if $ \Cr{elong1}^{-\frac 2{d-1}}\Cr{elong2} c_{\Phi}   \epsilon  < 1 $.
  Condition (3) follows from this and the fact that 
  \[
    \Phi(P_{\sigma[m]}) < \Phi(P_{\sigma[n]} + \Phi(P_{\sigma[n]}) B^d) < 2 \Phi(P_{\sigma[n]}) .
  \]
\end{proof}

\section{Cells with many facets}
\label{sec:manyfac}

The aim of this section is to show how Theorem \ref{thm:ComplementaryThm}, combined with the geometric arguments developed in Section~\ref{sec:PolytopeApproximation}, implies our main results, i.e. Theorems \ref{mainthm:UpperBoundfn}, \ref{mainthm:LowerBoundfn} and \ref{mainthm:boundsManyFacetsIsoperimeter}.
To do so, we start by presenting the three intermediary results that will play a key role in the proofs of these theorems. 

By seeing a polytope with $ n $ facets as a polytope with $ (n - 1) $ facets cut `a little bit' by one halfspace, we obtain the following recurrence relation. 
\begin{theorem}
  \label{thm:UpperBoundReccurence}
  There exist constants $\Cl[univ]{decreases1}$ and $\Cl[univ]{decreases2}$, independent of $\varphi$, such that for $n > (\Cr{decreases1} c_\Phi )^{d/2}$,
  \[
    \P ( f(Z)=n )
    \leq \Cr{decreases2} c_\Phi n^{-\frac 2{d-1}} \P ( f(Z)=n-1)
  \]
  and $\Cr{decreases2} c_\Phi n^{-2/(d-1)} < 1$, where 
  $c_\Phi$ is defined in \eqref{eq:V1RPhi}.
\end{theorem}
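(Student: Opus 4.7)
The plan is to combine the Complementary Theorem with the ``many removable facets'' statement of Lemma \ref{lem:RSW3} to derive a one-step recurrence between the integrals representing $\P(f(Z)=n)$ and $\P(f(Z)=n-1)$. By Theorem \ref{thm:ComplementaryThm}(1) and the definition \eqref{def:MeasPPnMu} of $\MeasPPnMu$,
\[
  \P(f(Z)=n) = \frac{\gamma^d(n-d)!}{\IntMos\, n!}\, I_n,\qquad I_n := \int_{\HS^n} F_n(\bd{H^\epsilon})\,\mathrm d\tilde\mu^n(\bd{H^\epsilon}),
\]
where $F_n := \1(P_{[n]}\in\PPn)\,\1(\cent(P_{[n]})\in[0,1]^d)\,\1(\Phi(P_{[n]})<1)$, and similarly for $n-1$. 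For $\tilde\mu^n$-almost every tuple, $P_{[n]}$ is a simple $n$-tope, so Lemma \ref{lem:RSW3} yields a measurable selection $J=J(P_{[n]})\subset[n]$ of cardinality at least $n/4$. Since $F_n$ depends only on the unordered collection of halfspaces, $\int F_n \1(j\in J)\,\mathrm d\tilde\mu^n$ is independent of $j\in[n]$, and averaging gives
\[
  I_n \le \frac{4}{n}\int F_n|J|\,\mathrm d\tilde\mu^n = 4\int_{\HS^n} F_n\,\1(n\in J)\,\mathrm d\tilde\mu^n.
\]

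Next I apply Fubini using $\tilde\mu^n=\tilde\mu^{n-1}\otimes\tilde\mu$, fixing $P':=P_{[n-1]}$ and integrating over the last halfspace $H_n^{\epsilon_n}$. The condition $n\in J$ forces $d_H(P_{[n]},P')<\delta:=\Cr{RSW5}c_\Phi\Phi(P_{[n]})n^{-2/(d-1)}$ and $\Phi(P')<e^{c}\Phi(P_{[n]})$ with $c:=\Cr{RSW5}c_\Phi n^{-(d+1)/(d-1)}$. Geometrically, $H_n^{\epsilon_n}$ must cut off from $P'$ a cap of depth at most $\delta$; writing such a halfspace as $H(\vect{u},t)^\epsilon$, the set of admissible parameters $t$ is an interval of length at most $\delta$ for each $(\vect{u},\epsilon)\in\S^{d-1}\times\{\pm 1\}$. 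Hence, using $\Phi(P_{[n]})<1$, the inner integral is at most $2\Cr{RSW5}c_\Phi n^{-2/(d-1)}$, subject to the constraints that $P'\in\PPn[n-1]$ (a.e.), $\Phi(P')<e^{c}$, and, by the $1$-Lipschitz property of $\cent$, $\cent(P')\in[0,1]^d+\delta\B$.

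Applying Lemma \ref{lem:Phichange2} to the resulting $(n-1)$-fold integral produces the ``enlargement'' factors $\lambda_d([0,1]^d+\delta\B)$ and $(e^{c})^{n-1-d}$ multiplying the standard integral associated with $\P(f(Z)=n-1)$. Both factors are uniformly bounded as $n\to\infty$: the first because $\delta=O(n^{-2/(d-1)})\to 0$, and the second because
\[
  c(n-1-d) = O\!\left(n^{1-(d+1)/(d-1)}\right) = O\!\left(n^{-2/(d-1)}\right) \to 0,
\]
where the crucial input is that the exponent $(d+1)/(d-1)$ appearing in Lemma \ref{lem:RSW3} is strictly larger than $1$ for $d\ge 2$. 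Translating back through Theorem \ref{thm:ComplementaryThm}(1) for $n-1$ yields the announced recurrence $\P(f(Z)=n)\le\Cr{decreases2}c_\Phi n^{-2/(d-1)}\P(f(Z)=n-1)$ with a purely dimensional constant $\Cr{decreases2}$; choosing $\Cr{decreases1}$ large enough then guarantees that the multiplier is strictly less than $1$ whenever $n>(\Cr{decreases1}c_\Phi)^{d/2}$. The main obstacle is precisely this bounded-enlargement step: the Hausdorff-distance rate $n^{-2/(d-1)}$ in Lemma \ref{lem:RSW3} alone would not suffice, and it is the ``fast'' $\Phi$-content rate $n^{-(d+1)/(d-1)}$ that tames the factor $(e^{c})^{n-1-d}$ as $n$ grows.
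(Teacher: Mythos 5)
Your proposal is correct and follows essentially the same route as the paper: both start from the integral representation \eqref{eq:ProbaIntegralFormbis}, invoke Lemma \ref{lem:RSW3} to get a set $J$ of at least $n/4$ removable halfspaces, reduce to the case $n\in J$ by a symmetry argument (you state the averaging version explicitly, the paper packages it as Lemma \ref{lem:TechnicalPermutation}), integrate out the last halfspace by the geometric cap computation (Lemma \ref{lem:IntTakeOutFacets}), and rescale via Lemma \ref{lem:Phichange2}. Your closing observation — that the Hausdorff rate $n^{-2/(d-1)}$ alone would not tame the factor $(e^c)^{n-1-d}$, and that it is the faster $\Phi$-content rate $n^{-(d+1)/(d-1)}$ from \eqref{eq:Phidiff1} that makes the rescaling cost uniformly bounded — is precisely the point of the paper's proof, and the cosmetic extra factor of $2$ in your inner-integral bound is harmless.
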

\bigskip

The next theorem provides an upper-bound for the probability of the same event $\{f(Z)=n\}$ intersected with the event of being  $(\epsilon \colon i,j)$-elongated.
Let $\Cr{elong1}$, $\Cr{elong2}$, and $c_{\Phi}$ be the constants used in Lemma \ref{lem:sequenceFacets}.

\begin{theorem}
  \label{thm:boundsManyFacetsIsoperimeter}
  Assume $1 \leq i < j \leq \lceil (d-1) / 2 \rceil $. There exist constants $\Cl[univ]{IntPolyM}$, $\Cl[univ]{thmelong1}$ and $\Cl[univ]{thmelong2}$, such that for any 
  $ \epsilon  < \Cr{elong1}^{2/(d-1)}\Cr{elong2}^{-1} (c_{\Phi})^{-1}   $ we have 
  \begin{equation*}
    \P \left( f(Z)=n, \ \frac{V_j(Z)^{\frac 1j}}{V_i(Z)^{\frac 1i}} <  \epsilon \right)
    <   \frac{ \gamma^d }{ \IntMos } 
    \ e^{ \Cr{thmelong1} \epsilon^{-2(d-1)}} (\Cr{thmelong2}\epsilon^{\frac 1{2d^4}})^n \  n^{-\frac{2n}{d-1}}  
  \end{equation*}
  for $n > \lfloor \Cr{elong1} \epsilon^{-(d-2)} \rfloor^2 $.
\end{theorem}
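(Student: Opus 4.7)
The plan is to combine the Complementary Theorem (Theorem \ref{thm:ComplementaryThm}(1)) with the permutation-averaging machinery provided by Lemma \ref{lem:sequenceFacets}. Since the isoperimetric ratio $V_j(Z)^{1/j}/V_i(Z)^{1/i}$ is scale invariant, it depends only on $\shape(Z)$, so Theorem \ref{thm:ComplementaryThm}(1) together with the definition \eqref{def:MeasPPnMu} of $\MeasPPnMu$ rewrites the probability in question as
\[
  \frac{\gamma^d}{\IntMos}\,\frac{(n-d)!}{n!}\int_{\HS^n}\1(E)\,d\MeasHalfSpaceMu^n(\bd{H^\epsilon}),
\]
where $E$ denotes the event $\bigl\{P_{[n]}\in\PPn,\ \cent(P_{[n]})\in[0,1]^d,\ \Phi(P_{[n]})<1,\ V_j(P_{[n]})^{1/j}/V_i(P_{[n]})^{1/i}<\epsilon\bigr\}$.

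Set $m=\lfloor\Cr{elong1}\epsilon^{-(d-2)}\rfloor$. On the set $E$ the halfspaces are almost surely in generic position, so Lemma \ref{lem:sequenceFacets} supplies at least $2^{-n}(n-2m)!$ permutations $\sigma$ satisfying its Conditions (1)--(3). Because $\1(E)$ is symmetric in the halfspaces, averaging over $\mathfrak{S}_n$ yields
\[
  \int\1(E)\,d\MeasHalfSpaceMu^n \;\leq\; \frac{2^n\,n!}{(n-2m)!}\int\1(E)\,\1(A)\,d\MeasHalfSpaceMu^n,
\]
where $A$ is the event that the identity permutation satisfies Conditions (1)--(3). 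I would then integrate out $H_n^{\epsilon_n}, H_{n-1}^{\epsilon_{n-1}}, \ldots, H_{2m+1}^{\epsilon_{2m+1}}$ successively. Conditions (1) and (3), combined with $\Phi(P_{[n]})<1$, give $\Phi(P_{[m]})<2$, and hence $d_H(P_{[k]},P_{[k-1]})<\delta_k := 2\Cr{sequfac}\,c_\Phi\,\epsilon^{1/(2d^4)}\,k^{-2/(d-1)}$ with $\delta_k$ now a deterministic function of $k$ alone. Unpacking the parametrization $H(\vect{u},t)$ of \eqref{def:Thetamu}, the set of halfspaces $(H^\epsilon)$ that cut a cap of Hausdorff width less than $\delta_k$ from $P_{[k-1]}$ has $\MeasHalfSpaceMu$-measure at most $2\delta_k$ (for each $\vect{u}$ the admissible $t$-interval has length at most $\delta_k$, and $\varphi$ is a probability measure), uniformly in the earlier halfspaces. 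Iterating produces $\prod_{k=2m+1}^n 2\delta_k$, which by Stirling is bounded by $(C\epsilon^{1/(2d^4)})^{n-2m}\,(2m)!^{2/(d-1)}\,C^n\,n^{-2n/(d-1)}$; this is precisely the engine generating the $n^{-2n/(d-1)}$ decay.

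For the remaining outer integral over $(H_1^{\epsilon_1},\ldots,H_{2m}^{\epsilon_{2m}})$, the indicator $\1(P_{[n]}\in\PPn)$ forces each such hyperplane to meet the fixed compact set $[0,1]^d+c_\Phi d\,\B$ (which contains $P_{[n]}$ whenever $\cent(P_{[n]})\in[0,1]^d$ and $\Phi(P_{[n]})<1$); the $\MeasHalfSpaceMu$-mass of the associated set of halfspaces is a finite constant $c(\varphi)$, and hence the outer integral is at most $c(\varphi)^{2m}$. Finally, the $m$-dependent factors $(2m)!^{2/(d-1)}$, $c(\varphi)^{2m}$, $\epsilon^{-2m/(2d^4)}$ and the polynomial ratio $(n-d)!/(n-2m)! \leq n^{2m-d}$ (itself at most $2^n$ under the hypothesis $n>m^2$ once $m$ is large enough) are all dominated by $\exp\!\bigl(Cm\log(m/\epsilon)\bigr)\leq\exp\!\bigl(\Cr{thmelong1}\epsilon^{-2(d-1)}\bigr)$, using $m\leq\Cr{elong1}\epsilon^{-(d-2)}$. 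Assembling everything produces the stated inequality.

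The main technical obstacle is the inductive thin-cap step: the $\delta_k$ delivered by Lemma \ref{lem:sequenceFacets} depends a priori on $\Phi(P_{[m]})$, hence on $H_1,\ldots,H_m$, and it is essential to deterministically upper-bound it via Condition (3) and $\Phi(P_{[n]})<1$ \emph{before} performing Fubini, so that the successive halfspace integrations yield a clean telescoping product with a $k$-only dependence. The secondary subtlety is the absorption of the polynomial factor $n^{2m-d}$ into a $2^n$-type factor, which is exactly the role played by the hypothesis $n>m^2$ in the statement.
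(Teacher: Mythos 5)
Your proposal is correct and follows essentially the same route as the paper's proof: represent the probability via Theorem \ref{thm:ComplementaryThm}(1), invoke Lemma \ref{lem:sequenceFacets} together with the permutation-averaging device (Lemma \ref{lem:TechnicalPermutation}), deterministically replace the random bound $\Phi(P_{[m]})$ by $2$ using Conditions (2)--(3) and $\Phi(P_{[n]})<1$, integrate out $H_n,\ldots,H_{2m+1}$ one cap at a time to produce $\prod_{k>2m}(2\delta k^{-2/(d-1)})$ (this is Lemma \ref{lem:IntTakeOutFacets}, which you re-derive rather than cite), bound the remaining $2m$-fold integral by a constant, and finish with Stirling, splitting the leftover factors between the $n$-independent $\exp(\Cr{thmelong1}\epsilon^{-2(d-1)})$ term and the $\Cr{thmelong2}^n$ term. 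Two minor points of bookkeeping: the thin-cap measure is in fact at most $\delta_k$ rather than $2\delta_k$ (the unsigned parametrization $\widetilde H(\vect u,t)$ already picks out the relevant orientation), which is harmless; and the factor $n^{2m-d}$ is not "$m$-dependent" and must be absorbed into the $\Cr{thmelong2}^n$ term via $n^{(2m-d)/n}\le e^{4/e}$ for $n>m^2$, not into $\exp(\Cr{thmelong1}\epsilon^{-2(d-1)})$ -- the paper does exactly this, and your parenthetical remark about $2^n$ suggests you saw this, but the sentence as written bundles it with the $n$-independent factors.
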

As we will see, the bound in Theorem \ref{thm:boundsManyFacetsIsoperimeter}, i.e. when we add the condition that the isoperimetric ratio is small,  is close to the one that we will get when iterating Theorem \ref{thm:UpperBoundReccurence} but with a constant in front of $n^{-\frac{2n}{d-1}}$ which is arbitrarily small.

The last theorem deals with the lower-bound for $\P \left(f(Z)=n\right)$.
This requires an extra-condition on the directional distribution $\varphi$.
Recall that we call $\varphi$ {\it well spread} if there exists a cap on the unit sphere where $\varphi$ is bounded from below by a multiple of the surface area measure.
We denote by $\mathscr{H}^{d-1} ( \cdot ) $ the $(d-1)$-dimensional Hausdorff measure, the surface area.
We will prove a slightly more precise form of Theorem \ref{mainthm:LowerBoundfn}
\begin{theorem}
  \label{thm:MainLowerBound}
  There exist constants $ \Cl[univ]{LowerBound1} > 0$, $\Cl[univ]{BoundedPoly1}\in\N$, independent of $\varphi$,  such that the following holds.
  Assume that $\varphi$ is well spread.
  In particular assume that there exists a cap $ C \subset \S^{d-1} $ of radius $ r \in (0,1) $ and a constant 
  $ \Cl[univ]{const:5} $ with $ \varphi ( \cdot ) > \Cr{const:5} \mathscr{H}^{d-1} ( \cdot ) $ on $C$.
  Then, for $ n \geq \Cr{BoundedPoly1} $, we have 
  \[
    \P ( f( Z)=n )
    > \frac{\gamma^d}{\IntMos}
    ( \Cr{const:5} \Cr{LowerBound1} r^{d+2} )^n
    n^{-\frac{2n}{d-1}}.
  \]
\end{theorem}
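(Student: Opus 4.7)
The plan is to combine the first part of Theorem \ref{thm:ComplementaryThm} with the definition \eqref{def:MeasPPnMu} of $\MeasPPnMu$ to rewrite
\[
\P(f(Z)=n) = \frac{\gamma^d}{\IntMos}\frac{(n-d)!}{n!}\int\limits_{\HS^n}\1\left(P_{[n]}\in\PPn\right)\1\left(\cent(P_{[n]})\in[0,1]^d\right)\1\left(\Phi(P_{[n]})<1\right)\MeasMultiSimple{\MeasHalfSpaceMu}{n}{H^\epsilon},
\]
and then to lower-bound this integral by exhibiting an explicit region of admissible halfspace configurations of sufficiently large $\MeasHalfSpaceMu^n$-measure.

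The region is built around a template polytope $P_0 \in \PPn$. Let $\vect u_0$ be the center of the cap $C$. I would choose $P_0$ in the shape of a flattened spherical cap of radius comparable to $r$, contained in $[0,1]^d$ with $\Phi(P_0)\leq 1/2$: its $n-d$ curved facets are the hyperplanes tangent to a ball $B(\vect x_0,\rho)$ of radius $\rho\sim r$ at points $\rho\vect u_1,\ldots,\rho\vect u_{n-d}$, where $\vect u_1,\ldots,\vect u_{n-d}$ form a $c_1 r n^{-1/(d-1)}$-net in $C$; the remaining $d$ facets close off $P_0$ from the side of $-\vect u_0$, using the evenness of $\varphi$, which yields $\varphi(\cdot)\geq\Cr{const:5}\mathscr{H}^{d-1}(\cdot)$ on $-C$ as well. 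For each $i\in\{1,\ldots,n-d\}$ I would associate a bin $B_i\subset\HS$ whose halfspaces have outer normal in the Voronoi cell of $\vect u_i$ within $C$ and offset $t$ in an interval of length comparable to the sagitta $\rho(r n^{-1/(d-1)})^2\sim r^3 n^{-2/(d-1)}$, yielding $\MeasHalfSpaceMu(B_i)\geq c_2\Cr{const:5}r^{d+2}n^{-1-2/(d-1)}$; the $d$ closing facets receive bins of $\MeasHalfSpaceMu$-measure at least $c_3\Cr{const:5}$, independent of $n$.

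The key geometric claim to verify is that whenever $H_i^{\epsilon_i}\in B_i$ for every $i$, the polytope $P_{[n]}=\bigcap_{i=1}^n H_i^{\epsilon_i}$ is combinatorially equivalent to $P_0$ (so $f(P_{[n]})=n$), stays inside $[0,1]^d$ by the $1$-Lipschitz property of $\cent$, and has $\Phi$-content less than $1$. Combinatorial invariance follows from the sagitta bound: each perturbation moves its facet by strictly less than the local gap to the neighbouring facets, so no facet can be absorbed. The bins $B_1,\ldots,B_n$ are pairwise disjoint (they occupy disjoint angular regions of $\S^{d-1}$), hence the $n!$ permutations of bin-to-index assignments yield pairwise disjoint admissible subsets of $\HS^n$, giving
\[
\int\limits_{\HS^n}\1(\cdots)\MeasMultiSimple{\MeasHalfSpaceMu}{n}{H^\epsilon} \geq n!\prod_{i=1}^n\MeasHalfSpaceMu(B_i) \geq c_4\,n!\,(\Cr{const:5}r^{d+2})^n n^{-\left(1+\frac{2}{d-1}\right)(n-d)}.
\]

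Substituting this into the expression for $\P(f(Z)=n)$ cancels the $n!$; applying Stirling's bound $(n-d)!\geq c_5 n^{n-d}e^{-n}$ turns $n^{n-d}\cdot n^{-(1+2/(d-1))(n-d)}$ into $n^{-2n/(d-1)}\cdot n^{2d/(d-1)}$, and the residual polynomial factor $n^{O(1)}$ is absorbed into a slightly smaller base by requiring $n\geq\Cr{BoundedPoly1}$. This gives the desired bound
\[
\P(f(Z)=n)\geq \frac{\gamma^d}{\IntMos}(\Cr{const:5}\Cr{LowerBound1}r^{d+2})^n n^{-\frac{2n}{d-1}}
\]
for a constant $\Cr{LowerBound1}>0$ independent of $\varphi$. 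The main obstacle is the geometric part: constructing a template polytope $P_0$ whose cap portion exhibits exactly $n-d$ distinct facets from the net, and proving quantitatively that any perturbation within the sagitta-sized bins preserves the combinatorial structure; the choice $\rho\sim r$ for the defining ball's radius is essential, as it is precisely what yields the exponent $r^{d+2}$ in the final bound via the sagitta $r^3 n^{-2/(d-1)}$ and the angular measure $r^{d-1}/n$.
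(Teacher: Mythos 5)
Your overall strategy — rewrite $\P(f(Z)=n)$ via the first part of Theorem \ref{thm:ComplementaryThm} and \eqref{def:MeasPPnMu}, then lower-bound the integral by constructing $n$ pairwise disjoint bins in $\HS$ each of measure $\gtrsim\Cr{const:5}\,r^{d+2}\,n^{-(d+1)/(d-1)}$, and finally invoke the disjointness to replace the $\frac{1}{n!}$ by the product of bin measures — is exactly the approach of the paper, and your Stirling accounting at the end recovers the correct exponent $n^{-2n/(d-1)}$.

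However, there is a genuine gap, and you acknowledge it yourself: the entire argument hinges on the geometric claim that any choice $H_i^{\epsilon_i}\in B_i$ produces a simple polytope with exactly $n$ facets that is both bounded and of controlled size. This is not a detail — it is the crux, and the paper devotes two non-trivial lemmas to it (Lemmata \ref{lem:ConstructionBoundedPoly} and \ref{lem:subsets}). In particular, the paper does \emph{not} use your decomposition into $n-d$ "cap facets" plus $d$ "closing facets". It instead builds a saturated packing of $m<\Cr{BoundedPoly1}$ caps $C(\vect y_i, r/12)$ in $C\cup(-C)$, proves quantitatively that any choice of one normal per cap gives a polytope contained in $B(\origin, 4r^{-1})$ (Lemma \ref{lem:ConstructionBoundedPoly}), and then packs all $n$ bins inside these $m$ caps so that every big cap contains at least one bin, thereby inheriting that boundedness uniformly over all perturbations (Lemma \ref{lem:subsets}). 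Your construction, with only $d$ closing halfspaces on the $-\vect u_0$ side, raises two concrete issues that you would need to resolve: (i) $d$ normals near $-\vect u_0$ together with normals concentrated near $+\vect u_0$ must positively span $\R^d$ in a \emph{quantitative} way that remains valid under perturbation within the bins — that is not obvious and must be proven; (ii) your $d$ closing bins of "measure $\geq c_3\Cr{const:5}$ independent of $n$" would require you to specify their geometry and check that choosing an arbitrary halfspace within them does not destroy the combinatorics or the size control. The paper's choice of distributing \emph{all} $n$ normals over $C\cup(-C)$ avoids having a distinguished small set of structurally fragile facets. The combinatorial-invariance claim ("each perturbation moves its facet by strictly less than the local gap, so no facet can be absorbed") is plausible but also unproven; in the paper this follows from the disjointness of the slightly smaller caps $C(\vect v_i,\rho/2)\subset C(\vect z_i,\rho)$ combined with an explicit offset window tied to the cap height $\rho^2/8$. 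So your proposal identifies the right strategy but omits precisely the quantitative geometric lemma that makes it work.
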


In the next subsection, we show how to deduce in a very small number of steps our main results from the three theorems above.
The rest of Section \ref{sec:manyfac} is devoted to the proof of Theorems \ref{thm:UpperBoundReccurence}, \ref{thm:boundsManyFacetsIsoperimeter} and \ref{thm:MainLowerBound}.

\subsection{Deducing Theorems \ref{mainthm:UpperBoundfn}, \ref{mainthm:LowerBoundfn} and \ref{mainthm:boundsManyFacetsIsoperimeter} from Theorems \ref{thm:UpperBoundReccurence}, \ref{thm:MainLowerBound} and \ref{thm:boundsManyFacetsIsoperimeter}}
Set $n_0 := \lceil (\Cr{decreases1} c_\Phi )^{d/2} \rceil $ where $\Cr{decreases1}$ is given by Theorem \ref{thm:UpperBoundReccurence}.
Iterating Theorem \ref{thm:UpperBoundReccurence}, gives us that for any $n\geq n_0$,
\begin{equation*}
  \P( f(Z)=n) 
  \leq (\Cr{decreases2} c_\Phi)^{n-n_0} \left(\frac{n!}{n_0!}\right)^{-\frac 2{d-1}}.
\end{equation*}
Now Stirling's approximation  $ n! > n^{n} e^{-n} $ 
implies  for any $n \geq n_0$,
\[
  \P ( f(Z)=n )
  < (\Cr{decreases2} c_\Phi)^{-n_0} \left(n_0!\right)^{\frac2{d-1}} (e^{\frac 2{d-1}} \Cr{decreases2} c_\Phi)^{n}  n^{-\frac{2n}{d-1}}
\]
which implies Theorem \ref{mainthm:UpperBoundfn}.

\medskip
Taking 
$
  \Cr{lowerbound2TH2} 
  =\left( \min(1,\frac{\gamma^d}{\IntMos}) ^{\frac{1}{\Cr{BoundedPoly1}}} \right) \Cr{const:5} \Cr{LowerBound1} r^{d+2}
$
where $\Cr{BoundedPoly1}$, $\Cr{const:5}$, $\Cr{LowerBound1}$ and $r$ are given by Theorem \ref{thm:MainLowerBound}, we obtain  Theorem \ref{mainthm:LowerBoundfn}.

\medskip
Taking 
$
  \Cl[univ]{thm1-3-1bis}
  =\Cr{elong1}^{2/(d-1)}\Cr{elong2}^{-1} (c_{\Phi})^{-1}
$,
$
  \Cl[univ]{thm1-3-2bis}
  =\frac{\Cr{thmelong2}\epsilon^{\frac 1{2d^4}}}{\Cr{const:5} \Cr{LowerBound1} r^{d+2}}
$
and 
$
  \Cl[univ]{depepsilonbis}^{(\epsilon)}
  =\ e^{ \Cr{thmelong1} \epsilon^{-2(d-1)}}
$,
we deduce from Theorem \ref{thm:MainLowerBound} and Theorem \ref{thm:boundsManyFacetsIsoperimeter}, when $\varphi$ is well spread, that
\[ 
  \P \left( \frac{V_j(Z)^{\frac 1j}}{V_i(Z)^{\frac 1i}}< \epsilon \  \Big\vert f(Z)=n\right) 
  \leq  
  {\Cr{depepsilonbis}}^{(\epsilon)}(\Cr{thm1-3-2bis} \epsilon^{\frac 1{2d^4}})^n  
\]
for any $\epsilon < \Cr{thm1-3-1bis} $ and $n \geq \max \left( \Cr{BoundedPoly1} , \lfloor \Cr{elong1} \epsilon^{-(d-2)} \rfloor^2 \right) $.
Now, choose $\epsilon$ such that $ \Cr{thm1-3-2bis} \epsilon^{\frac 1{2d^4}}=\delta/2 $.
Theorem \ref{mainthm:boundsManyFacetsIsoperimeter} follows from the fact that $ {\Cr{depepsilonbis}}^{(\epsilon)} \left(\frac{\delta}{2}\right)^n < \delta^n $, for $ n > \ln( {\Cr{depepsilonbis}}^{(\epsilon)} ) / \ln (2) $.

\subsection{Proof of Theorem \ref{thm:UpperBoundReccurence}}
\label{section:distribqn}

We first need to state the following elementary but useful lemma.
We denote by $ \mathfrak{S}_n $ the set of permutations of $[n]$.
For $  \vect{x} = ( x_1 , \ldots , x_n ) $ and $ \sigma \in  \mathfrak{S}_n $, we write
$  \vect{x}_\sigma := ( x_{\sigma(1)} , \ldots , x_{\sigma(n)} ) $.
It is clear that the following holds.
\begin{lemma}
  \label{lem:TechnicalPermutation}
  Let $ ( X , \Sigma , \psi ) $ be a measured space,
  $ m,n > 0 $ be integers,
  $ f : X^n \to [0,\infty) $ be a measurable function
  and $ S , T \subset X^n $ measurable sets.
  Assume that
  \begin{itemize}
    \item $ f $ is symmetric:
      for any $ \sigma \in \mathfrak{S}_n $
      and any $  \vect{x} \in X^n $,
      we have $ f (  \vect{x}_\sigma ) = f (  \vect{x} ) $;
    \item $ S $ is symmetric:
      for any $ \sigma \in \mathfrak{S}_n $,
      and any $  \vect{x} \in X^n $ we have
      $ \1 (  \vect{x}_\sigma \in S ) = \1 (  \vect{x} \in S ) $;
    \item for any $  \vect{x} \in S $,
      there exist at least $ p $ permutations $ \sigma \in \mathfrak{S}_n $
      such that $ \vect{x}_\sigma \in T $.
  \end{itemize}
  Then
  \[
    \frac{p}{n!}
    \int\limits_{ X^n }
    \1 (  \vect{x} \in S )
    f (  \vect{x} ) 
    \MeasMultiSimple{\psi}{n}{x}
    \leq 
    \int\limits_{ X^n } 
    \1 ( \vect{x} \in T )
    f ( \vect{x} )
    \MeasMultiSimple{\psi}{n}{x} .
  \]
\end{lemma}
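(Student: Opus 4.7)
The plan is to exploit the invariance of the product measure $\psi^{n}$ and of $f$ under coordinate permutations to rewrite the right-hand side as a permutation average, and then invoke the combinatorial counting hypothesis pointwise. Everything in sight is nonnegative, so there are no convergence issues, and the only tools needed beyond symmetry are a change of variable and the linearity of integration with respect to a finite sum.

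First, I would observe that $\psi^{n}$ is invariant under the action of $\mathfrak{S}_{n}$ on $X^{n}$ by coordinate permutation. Combined with the hypothesis $f(\vect{x}_{\sigma})=f(\vect{x})$, a change of variable gives, for each $\sigma\in\mathfrak{S}_{n}$,
\[
  \int\limits_{X^{n}} \1(\vect{x}\in T)\, f(\vect{x})\, \MeasMultiSimple{\psi}{n}{x}
  \;=\; \int\limits_{X^{n}} \1(\vect{x}_{\sigma}\in T)\, f(\vect{x})\, \MeasMultiSimple{\psi}{n}{x}.
\]
Summing this identity over all $n!$ permutations and swapping the finite sum with the integral yields
\[
  n!\int\limits_{X^{n}} \1(\vect{x}\in T)\, f(\vect{x})\, \MeasMultiSimple{\psi}{n}{x}
  \;=\; \int\limits_{X^{n}} \Bigl(\sum_{\sigma\in\mathfrak{S}_{n}} \1(\vect{x}_{\sigma}\in T)\Bigr) f(\vect{x})\, \MeasMultiSimple{\psi}{n}{x}.
\]

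Next, I would bound the bracketed sum from below using the third hypothesis: for $\vect{x}\in S$ there exist at least $p$ permutations $\sigma$ with $\vect{x}_{\sigma}\in T$, so the sum is at least $p$; for $\vect{x}\notin S$ the bracket is trivially $\geq 0 = p\cdot\1(\vect{x}\in S)$. Hence the integrand on the right dominates $p\cdot\1(\vect{x}\in S)\,f(\vect{x})$ pointwise, and dividing by $n!$ yields the stated inequality.

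There is no real difficulty here: the argument is simply a bookkeeping of symmetries, packaging the standard ``$n!$-versus-permutation'' trick that will be used repeatedly in the sequel. I note that the symmetry of $S$ listed as the second hypothesis is not strictly needed for this inequality, but it will be natural in the applications, where $S$ is a symmetric subset of $\HS^{n}$ cut out by a condition on $P_{[n]}$ (such as $f(P_{[n]})=n$ or an isoperimetric constraint) that is manifestly invariant under relabeling of the halfspaces.
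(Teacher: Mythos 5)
Your proof is correct and is precisely the standard ``average over permutations'' argument that the paper has in mind when it says the lemma ``is clear'' and omits the proof. The change of variable, the pointwise lower bound $\sum_{\sigma}\1(\vect{x}_{\sigma}\in T)\geq p\,\1(\vect{x}\in S)$, and the interchange of the finite sum with the integral (unproblematic since everything is nonnegative) are exactly the steps needed, and your remark that the symmetry of $S$ is not actually used in the inequality is also accurate.
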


The next lemma deals with the measure of those polytopes $P_{[n]}$ which are close to $P_{[n-1]}$ in the Hausdorff distance.
\begin{lemma}
  \label{lem:IntTakeOutFacets}
    For any $\e>0$ and any measurable function $f:\HS^{n-1}\to (0,\infty)$, it holds that
    \begin{multline}
      \label{eq:ineq2}
      \int\limits_{\HS^n}
      \1 \left( P_{[n]} \in \PPn \right)
      \1 \left( d_H ( P_{[n]} , P_{[n-1]} ) < \e \right)
      f ( H_1^{\epsilon_1} , \ldots , H_{n-1}^{\epsilon_{n-1}} )    
      \MeasMultiSimple{\MeasHalfSpaceMu}{n}{H^\epsilon}
      \\
      < \e
      \int\limits_{\HS^{n-1}}
      \1 \left( P_{[n-1]} \in \PPn[n-1] \right)
      f ( \bd{H^\epsilon} )    
      \MeasMultiSimple{\MeasHalfSpaceMu}{n-1}{H^\epsilon} .
    \end{multline}
\end{lemma}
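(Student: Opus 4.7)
The plan is to apply Fubini's theorem to peel off the integration with respect to the last halfspace $H_n^{\epsilon_n}$, and then to exploit the fact that the condition $d_H(P_{[n]},P_{[n-1]}) < \e$ confines $H_n^{\epsilon_n}$ to a parameter set of one-dimensional measure of order $\e$.

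First I would fix the first $n-1$ halfspaces, assumed to make $P_{[n-1]} \in \PPn[n-1]$, and parametrize the last halfspace via $(\vect{u},t,\epsilon_n) \in \S^{d-1} \times (0,\infty) \times \{\pm 1\}$ using $H_n = H(\vect{u},t)$. By \eqref{def:Thetamu} the measure $\MeasHalfSpaceMu$ factors as $d\varphi(\vect{u}) \otimes dt \otimes d\MeasCount(\epsilon_n)$, and by Fubini the assertion reduces to showing, uniformly over such fixed $(n-1)$-tuples, that
\[
\int_\HS \1\bigl(P_{[n]} \in \PPn\bigr)\,\1\bigl(d_H(P_{[n]}, P_{[n-1]}) < \e\bigr)\,\dint\MeasHalfSpaceMu(H_n^{\epsilon_n}) < \e.
\]

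Second, I would identify $d_H(P_{[n]},P_{[n-1]})$ with the depth of the cut performed by $H_n^{\epsilon_n}$. Since $P_{[n]} \subset P_{[n-1]}$, the Hausdorff distance collapses to $\sup_{\vect{x} \in P_{[n-1]}} d(\vect{x},P_{[n]})$. For $\epsilon_n = -1$, every removed point $\vect{x}$ satisfies $\langle \vect{x},\vect{u}\rangle > t$, so its orthogonal projection onto $H_n$ already lies in $P_{[n]}$, yielding $d(\vect{x},P_{[n]}) \leq h(P_{[n-1]},\vect{u}) - t$. The matching lower bound comes from the vertex $\vect{x}_0$ of $P_{[n-1]}$ attaining $h(P_{[n-1]},\vect{u})$: for any $\vect{y}\in P_{[n]}$, by Cauchy--Schwarz, $\lVert \vect{x}_0 - \vect{y}\rVert \geq \langle \vect{x}_0 - \vect{y}, \vect{u}\rangle \geq h(P_{[n-1]},\vect{u}) - t$. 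Thus $d_H(P_{[n]},P_{[n-1]}) = h(P_{[n-1]},\vect{u}) - t$ when $\epsilon_n = -1$, with the analogous identity $d_H = t + h(P_{[n-1]},-\vect{u})$ when $\epsilon_n = +1$.

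Consequently, the event $\{d_H < \e\}$ confines $t$ to an open interval of length strictly less than $\e$ for each fixed $(\vect{u},\epsilon_n)$; the additional indicator $\1(P_{[n]} \in \PPn)$ can only shrink this set further, so integration first in $t$ produces at most $\e$. Averaging against the probability measure $\varphi$ and collecting the two orientations $\epsilon_n = \pm 1$ (which correspond to geometrically distinct small cuts near $h(P_{[n-1]},\vect{u})$ and near $-h(P_{[n-1]},-\vect{u})$, respectively, and must be treated in tandem) then yields the claimed bound on the inner integral, after which reinserting $f$ and integrating out the first $n-1$ halfspaces gives the stated inequality. The main obstacle is step two, namely pinning down $d_H$ as \emph{exactly} the depth of the cut rather than merely an upper bound, since the sharp identity is what produces the strict linear-in-$\e$ dependence; the remaining manipulations are elementary Fubini.
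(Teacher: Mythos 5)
Your overall strategy — Fubini to peel off the last halfspace and then bound the inner integral over $H_n^{\epsilon_n}$ by $\e$ — is the same as the paper's. However, there are two flaws in the execution.

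First, the exact identity $d_H(P_{[n]},P_{[n-1]}) = h(P_{[n-1]},\vect{u}) - t$ you assert for $\epsilon_n=-1$ is false, because the orthogonal projection of a removed point $\vect{x}$ onto $H_n$ need \emph{not} lie in $P_{[n-1]}$, let alone in $P_{[n]}$. Take the triangle $P_{[n-1]}$ with vertices $(0,0)$, $(1,10)$, $(-1,10)$ in $\R^2$ and cut with $\{y \le 5\}$: the removed vertex $(1,10)$ projects to $(1,5)\notin P_{[n-1]}$, and one computes $d_H(P_{[n]},P_{[n-1]}) = \sqrt{25.25} > 5 = h(P_{[n-1]},\vect{e}_2) - 5$. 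Fortunately this error is harmless: all that is needed — and all that you establish correctly via Cauchy--Schwarz — is the one-sided inequality $d_H(P_{[n]},P_{[n-1]}) \ge h(P_{[n-1]},\vect{u}) - t$, which already confines $t$ to the interval $(h(P_{[n-1]},\vect{u})-\e, h(P_{[n-1]},\vect{u}))$. You are mistaken, though, in believing the sharp identity is what ``produces the strict linear-in-$\e$ dependence''; the one-sided bound suffices, exactly as in the paper.

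Second, and more seriously, your accounting over $\epsilon_n \in\{\pm1\}$ is not carried out. In the parametrization $(\vect{u},t,\epsilon_n)\in\S^{d-1}\times(0,\infty)\times\{\pm1\}$, for a fixed $\vect{u}$ \emph{both} orientations can contribute an interval of $t$-length up to $\e$: one removes a sliver near $\langle\vect{x},\vect{u}\rangle = h(P_{[n-1]},\vect{u})$ with $t>0$, the other near $\langle\vect{x},\vect{u}\rangle = -h(P_{[n-1]},-\vect{u})$, which also has $t>0$ whenever $P_{[n-1]}$ lies entirely on the positive side of $\vect{u}^\perp$. Naively this gives $2\e$, not $\e$. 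What rescues the bound is the evenness of $\varphi$: the four contributions attached to $\vect{u}$ and $-\vect{u}$ together total $2\e$, so averaging against the even probability measure $\varphi$ returns $\e$. Your phrase ``must be treated in tandem'' gestures at this subtlety but does not resolve it. The paper sidesteps the issue cleanly by reparametrizing halfspaces as $\widetilde{H}(\vect{u},t)=\{\vect{x}:\langle\vect{x},\vect{u}\rangle\le t\}$ with $t$ ranging over all of $\R$, so that $\MeasHalfSpaceMu(\cdot)=\int_{\S^{d-1}}\int_\R\1(\widetilde{H}(\vect{u},t)\in\cdot)\dint t\dint\varphi(\vect{u})$ (this identity already uses evenness of $\varphi$) and, for each fixed $\vect{u}$, the admissible $t$-set is the single interval $(h(K,\vect{u})-\e,h(K,\vect{u}))$ of length at most $\e$. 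You should either adopt that reparametrization or explicitly invoke the evenness of $\varphi$ to close this gap.
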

\begin{proof}
  In this particular proof, we use the following representation of half-spaces: for any $\vect{u}\in\S^{d-1}$ and $t\in\R$, we denote by $\widetilde{H}(u,t)=\{ \vect{x}\in\R^d :\  \langle \vect{x}, \vect{u} \rangle\le t\}$ so that 
  \[
    \MeasHalfSpaceMu(\cdot)
    =\int\limits_{\S^{d-1}} \int\limits_{\R} \1 \left(\widetilde{H}(u,t)\in \cdot \right) \dint t \dint \varphi (\vect{u} ).
  \]
  For any $ K \in \KK  $, we now proceed with the following calculation:
  \begin{align}
    \label{eq:ineqinterm}
    &\int\limits_{\HS} \1 \left( K \cap H_n \neq \emptyset \right) \1 \left( d_H ( K , K\cap H_n^{\epsilon_n}) < \e  \right) \dint \MeasHalfSpaceMu (H_n^{\epsilon_n})
    \notag\\
    & = \int\limits_{\S^{d-1}} \int\limits_{\R} 
    \1 \left( K \cap H( \vect{u} , t )  \neq \emptyset \right) \1 \left( d_H ( K , K\cap \widetilde{H}( \vect{u} , t ) ) < \e ) \right)     \dint t \dint \varphi (\vect{u} )
    \notag\\
    & \leq \int\limits_{\S^{d-1}} \int\limits_{h(K,\vect{u})-\e}^{h(K,\vect{u})}
    \dint t \dint \varphi (\vect{u} ) 
    = \e .
  \end{align}
  Let us fix now $\vect{H^\epsilon}\in \HS^{n-1}$. We observe that for every $H_n^{\epsilon}\in \HS$,
  \begin{align}
    \label{eq:ineqinterm2}
    &\hspace*{0cm}\1 \left( P_{[n]} \in \PPn \right)
    \1 \left( d_H ( P_{[n]} , P_{[n-1]} ) < \e \right)
    \nonumber\\
    &\leq \1 \left( P_{[n-1]} \in \PPn[n-1] \right) \1 \left( P_{[n-1]} \cap H_n \neq \emptyset \right) \1 \left( d_H ( P_{[n-1]} , P_{[n-1]}\cap H_n^{\epsilon_n}) < \e  \right).
  \end{align}
  Integrating \eqref{eq:ineqinterm2} over $H_n^{\epsilon_n}\in \HS$ and combining it with \eqref{eq:ineqinterm} applied to $K=P_{[n-1]}$, we obtain
  \[
    \int\limits_{\HS} \1 \left( P_{[n]} \in \PPn \right)
    \1 \left( d_H ( P_{[n]} , P_{[n-1]} ) < \e \right)
    \dint \MeasHalfSpaceMu (H_n^{\epsilon_n})
    \le \e \1 \left( P_{[n-1]} \in \PPn[n-1] \right).
  \]
  We conclude by multiplicating the previous inequality by $f ( H_1^{\epsilon_1} , \ldots , H_{n-1}^{\epsilon_{n-1}} )$ and integrating it  with respect to $( H_1^{\epsilon_1} , \ldots , H_{n-1}^{\epsilon_{n-1}} )\in\MeasMultiSimple{\MeasHalfSpaceMu}{n-1}{H^\epsilon}$.
\end{proof}

\bigskip
We are now in the position to prove Theorem \ref{thm:UpperBoundReccurence}. 
Set $ \alpha = \Cr{RSW5} c_\Phi n^{-2/(d-1)} $, and set 
  \[
    I_n = \frac{ \IntMos }{ \gamma^d } \frac{n!}{(n-d)!} \P ( f(Z) = n) .
  \]
  By \eqref{def:MeasPPnMu} and \eqref{eq:ProbaIntegralFormbis}, we have 
  \[
    I_n = \int\limits_{\HS^n}
    \1 \left( P_{[n]} \in \PPn \right) 
    \1 \left( \cent (P_{[n]}) \in [0,1]^d \right)
    \1 \left( \Phi(P_{[n]}) <1  \right)
    \MeasMultiSimple{\MeasHalfSpaceMu}{n}{H^\epsilon} ,
  \]
  where $P_{[n]} = \cap_{i=1}^n H_i^{\epsilon_i}$.
  We want to use now Lemma $\ref{lem:RSW3}$ which, roughly speaking, tells us that the variable $H_n$ has a `small influence'.
  Set 
  \[
    S = \{ (  \vect{H}  ,  \vect{\epsilon}  ) \in \HS^n :\  \cap_{i=1}^n H_i^{\epsilon_i} \in \PPn 
    \text{ is a simple polytope}\},
  \]
  and 
  \[
    T = \{ (  \vect{H}  ,  \vect{\epsilon}  ) \in S :\ 
    d_H \left( P_{[n]} , P_{[n-1]} \right) < \alpha \Phi(P_{[n]}) ,\ 
    \Phi \left( P_{[n-1]} \right) < \exp \left\{ \alpha n^{-1} \right\} \Phi(P_{[n]}) \} .
  \]
  Lemma $\ref{lem:RSW3}$ tells us that, for any $ (\vect{H}  ,  \vect{\epsilon}  ) \in S $, there exists at least $n!/4$ permutations $\sigma\in\mathfrak{S}_n$ such that $(\vect{H}  ,  \vect{\epsilon}  )_\sigma \in T$.
  Hence, Lemma $\ref{lem:TechnicalPermutation}$ and the Lipschitz continuity of $\cent$ imply
  \begin{align*}
    \frac{I_n}4  
    &\leq  
      \int\limits_{\HS^n}
      \1 \left( P_{[n]} \in \PPn \right) 
      \1 \left( \cent (P_{[n]}) \in [0,1]^d \right)
      \\& \phantom{{}\leq\int\limits_{\HS^n}{}}
      \1 \left( d_H \left( P_{[n]} , P_{[n-1]} \right) < \alpha \right)
      \1 \left( \Phi \left( P_{[n-1]} \right) < \exp \left\{ \alpha n^{-1} \right\} \right)
      \MeasMultiSimple{\MeasHalfSpaceMu}{n}{H^\epsilon} 
    \\
    &\leq 
      \int\limits_{\HS^n}
      \1 \left( P_{[n]} \in \PPn[n] \right)
      \1 \left( \cent (P_{[n-1]}) \in [-\alpha,1+\alpha]^d \right)
      \\& \phantom{{} \leq \int\limits_{\HS^n} {}}
      \1 \left( d_H \left( P_{[n]} , P_{[n-1]} \right) < \alpha \right) 
      \1 \left( \Phi \left( P_{[n-1]} \right) < \exp \left\{ \alpha n^{-1} \right\}  \right)
      \MeasMultiSimple{\MeasHalfSpaceMu}{n}{H^\epsilon} .
  \end{align*}  
  Now, using consecutively Lemma \ref{lem:IntTakeOutFacets} and Lemma \ref{lem:Phichange2} applied to an $(n-1)$-fold integral, we obtain that
  \begin{align*}
    \frac{I_n}4
    &\leq
      \alpha \int\limits_{\HS^{n-1}}
      \1 \left( P_{[n-1]} \in \PPn[n-1] \right)
      \1 \left( \cent (P_{[n-1]}) \in [-\alpha,1+\alpha]^d \right)
      \\& \phantom{{} \leq \alpha \int\limits_{\HS^{n-1}} {}}
      \1 \left( \Phi \left( P_{[n-1]} \right) < \exp \left\{ \alpha n^{-1} \right\}  \right)
      \MeasMultiSimple{\MeasHalfSpaceMu}{n-1}{H^\epsilon}
    \\ 
    &\leq 
      \alpha (1+2\alpha)^{d} \exp \left\{ \alpha (n-1-d)n^{-1} \right\}
      \\& \phantom{{} \leq {}} \cdot
      \int\limits_{\HS^{n-1}}
      \1 \left( P_{[n-1]} \in \PPn[n-1] \right)
      \1 \left( \cent (P_{[n-1]}) \in [0,1]^d \right)
      \1 \left( \Phi \left( P_{[n-1]} \right) < 1 \right)
      \MeasMultiSimple{\MeasHalfSpaceMu}{n-1}{H^\epsilon} 
    \\&\leq
      \alpha (1+2\alpha)^d \exp \left\{ \alpha \left(1-\frac{d+1}{n}\right) \right\} I_{n-1} .
  \end{align*}
  Therefore,
  \begin{align*}
   \P ( f(Z) = n) 
   &\leq   
   4\alpha (1+2\alpha)^d \exp \left\{ \alpha \left(1-\frac{d+1}{n}\right)\right\} 
   \frac{(n-d)} {n}  \P ( f(Z) = n-1)
   \\&\leq   
   4\alpha \exp \left\{ 2d\alpha + \alpha \left(1-\frac{d+1}{n}\right) \right\} 
    \P ( f(Z) = n-1)  .
  \end{align*}
  This proves that for   $ n > \Cr{RSW4} $,
  we have 
  \[
    \P ( f(Z)= n) 
    <  4\alpha \exp\{ 3 d \alpha \} \P ( f(Z)= n - 1 ).
  \]
  Set $\Cr{decreases2} := 4 e^{3d} \Cr{RSW5}$ and recall that $\alpha = \Cr{RSW5} c_\Phi n^{-2/(d-1)}$.
  Theorem \ref{thm:UpperBoundReccurence} follows for $ \Cr{RSW5} c_\Phi n^{-2/(d-1)} < 1$, since in that case
  $
    4\alpha e^{ 3 d \alpha } \leq 4 \alpha e^{3d}
    = \Cr{decreases2} c_\Phi n^{-2/(d-1)}
  $.
    
\subsection{Proof of Theorem \ref{thm:boundsManyFacetsIsoperimeter}}

We will proceed in a similar way as in the proof of Theorem \ref{thm:UpperBoundReccurence} with one  main difference: 
in order to take into account the elongation condition, we will use Lemma \ref{lem:sequenceFacets} instead of Lemma \ref{lem:RSW3}.
In particular, Lemma \ref{lem:sequenceFacets} does not guarantee that there is a permutation $\sigma$ such that the  condition on the isoperimetric ratio $\frac{V_j(P_{[n]})^{\frac 1j}}{V_i(P_{[n]})^{\frac 1i}} < \epsilon $ is satisfied by $P_{\sigma[n-1]}$ so there is no possibility to apply it more than once. 
This explains why we have directly a general upper bound but not a recurrence relation similar as the one of Theorem \ref{thm:UpperBoundReccurence}.

Let $\Cr{elong1}$, $\Cr{elong2}$, $\Cr{sequfac}$ and $c_{\Phi}$ be the constants used in Lemma \ref{lem:sequenceFacets}, assume 
$ \epsilon  < \Cr{elong1}^{2/(d-1)}\Cr{elong2}^{-1} (c_{\Phi})^{-1}   $, and set $m=\lfloor \Cr{elong1} \epsilon^{-(d-2)} \rfloor $ and 
\[
  \delta = \Cr{sequfac} c_{\Phi} \epsilon^{\frac 1{2d^4}} . 
\]
Set 
\[
  I_n := 
  \frac{ \IntMos }{ \gamma^d } \frac{n!}{(n-d)!} 
  \P \left( 
    f(Z) =n,\  
    \frac{V_j(Z)^{\frac 1j}}{V_i(Z)^{\frac 1i}} < \epsilon \right) .
\]
By \eqref{eq:ProbaIntegralFormbis}, we have 
\begin{align*}
  & I_n = \\
  & \int\limits_{\HS^n}
  \1 \left( P_{[n]} \in \PPn \right)
  \1 \left( \cent (P_{[n]}) \in [0,1]^d \right)
  \1 \left( \Phi(P_{[n]}) <1  \right)
  \1 \left( \frac{V_j(P_{[n]})^{\frac 1j}}{V_i(P_{[n]})^{\frac 1i}} < \epsilon \right)
  \MeasMultiSimple{\MeasHalfSpaceMu}{n}{H^\epsilon} ,
\end{align*}
where $P_{[n]} = \cap_{i=1}^n H_i^{\epsilon_i}$.
Roughly speaking, we will now use Lemmata \ref{lem:TechnicalPermutation} and \ref{lem:sequenceFacets} to order the halfspaces such that integrating step by step, starting by $H_n^{\epsilon_n}$, the integrals can be well bounded.
Set
\[
  S = \left\{ (  \vect{H}  ,  \vect{\epsilon}  ) \in \HS^n :\  
  P_{[n]} \in \PPn \mbox{ with facets in generic position},\ 
  \frac{V_j(P_{[n]})^{\frac 1j}}{V_i(P_{[n]})^{\frac 1i}}  < \epsilon \right\},
\]
and 
\begin{align*}
T &= \Big\{ (  \vect{H}  ,  \vect{\epsilon}  ) \in \HS^n :\ 
       P_{[n]} \in \PPn , \,  
       \| \cent(P_{[n]}) - \cent(P_{[m]}) \| < \Phi(P_{[n]}),\, 
       \Phi(P_{[m]}) < 2 \Phi(P_{[n]}),\,
   \\& \qquad 
       d_H ( P_{[k]} , P_{[k-1]} ) < \delta \Phi(P_{[m]}) k^{-\frac 2{d-1}} \text{ for } 2m < k \leq n 
     \Big\} .
\end{align*}
Lemma \ref{lem:sequenceFacets} tells us that, for any $ (\vect{H}  ,  \vect{\epsilon}  ) \in S $, there exist at least $2^{-n}(n-2m)!$ permutations $\sigma\in\mathfrak{S}_n$ such that $(\vect{H}  ,  \vect{\epsilon}  )_\sigma \in T$.
Hence, Lemma \ref{lem:TechnicalPermutation} implies
\begin{align*}
    &\frac{2^{-n} (n-2m)!}{n!} I_n
  \\
    &\leq
    \int\limits_{\HS^n}
    \1 \left( P_{[n]} \in \PPn \right)
    \1 \left( \cent (P_{[n]}) \in [0,1]^d \right)
    \1 \left( \Phi(P_{[n]}) <1  \right)
    \\& \phantom{{} \leq \int\limits_{\HS^n} {}}
    \1 \left( \| \cent(P_{[n]}) - \cent(P_{[m]}) \| < \Phi(P_{[n]}) \right)
    \1 \left( \Phi(P_{[m]}) < 2 \Phi(P_{[n]}) \right) 
    \\& \phantom{{} \leq \int\limits_{\HS^n} {}}
    \1 \left( d_H ( P_{[k]} , P_{[k-1]} ) < \delta  \Phi( P_{[m]} ) k^{-\frac 2{d-1}} \text{ for } 2m <  k \leq n \right)
    \MeasMultiSimple{\MeasHalfSpaceMu}{n}{H^\epsilon} 
  \\
    &\leq
    \int\limits_{\HS^n}
    \1 \left( P_{[n]} \in \PPn \right)
    \1 \left( \cent (P_{[m]}) \in [-1,2]^d \right)
    \1 \left( \Phi(P_{[m]}) < 2  \right)
    \\& \phantom{{} \leq \int\limits_{\HS^n} {}} 
    \1 \left( d_H ( P_{[k]} , P_{[k-1]} ) \in ( 0 , 2 \delta k^{-\frac 2{d-1}} ) \text{ for } 2m < k \leq n \right)
    \MeasMultiSimple{\MeasHalfSpaceMu}{n}{H^\epsilon} .
\end{align*}
Now, using $n-2m$ times \eqref{eq:ineq2}, we have
\begin{align*}
  \frac{2^{-n} (n-2m)!}{n!} I_n
  < (2 \delta)^{n-2m} \left( \frac{n!}{(2m)!} \right) ^{-\frac 2{d-1}}
  \Cr{IntPolyM} ,
\end{align*}  
where $ \Cr{IntPolyM} = \Cr{IntPolyM}(m,d,\varphi) $ is defined by
\[
  \Cr{IntPolyM}
  := \int\limits_{\HS^{2m}}
    \1 \left( P_{[2m]} \in \PP_{2m} \right)
    \1 \left( \cent (P_{[m]}) \in [-1,2]^d \right)
    \1 \left( \Phi(P_{[m]}) < 2  \right)
  \MeasMultiSimple{\MeasHalfSpaceMu}{2m}{H^\epsilon} .
\]
This implies for $n >m^2$
\begin{align}
  \notag
  \P \left( f(Z) =n,\ \frac{V_j(Z)^{\frac 1j}}{V_i(Z)^{\frac 1i}} < \epsilon \right) 
  &  < 
  \frac{ \gamma^d }{ \IntMos }   \Cr{IntPolyM} (2 \delta)^{-2m}  ((2m)!) ^{\frac 2{d-1}} 
  n^{2m-d} (4 \delta)^{n} (n!) ^{-\frac 2{d-1}}  
  \\  & \leq  \label{eq:fgepsdef}
  \frac{ \gamma^d }{ \IntMos }   \Cr{IntPolyM} g(\e) f(\e)^n \  n^{-\frac{2n}{d-1}}  
\end{align}
where we defined
\[
  g(\e) 
  := 
  \exp\left\{ \left(\frac {8 \Cr{elong1}^2 }{d-1} + \frac {\Cr{elong1} + 1 }{\Cr{sequfac} c_{\Phi}  }\right) \ \epsilon^{-2(d-1)} )   \right\}
  , \textrm{ and }\  
  f(\e)
  :=
  4 e^{\frac{4}{e}+\frac2{d-1}} \Cr{sequfac} c_{\Phi} \ \epsilon^{\frac 1{2d^4}}.
\]
The estimates in \eqref{eq:fgepsdef} hold because using $n! < n^n$,
\begin{align*}
  (2 \delta)^{-2m}((2m)!) ^{\frac 2{d-1}}    
  &\leq 
  \exp\left\{ \frac {2}{d-1}\, 2m\ln(2m) - 2m\ln(2\delta)  \right\}
  \\ &\leq
  \exp\left\{ \frac {8 m^2}{d-1}  + \frac m{\delta}   \right\}
  \ \leq g(\e) 
\end{align*}
and, with Stirling's approximation $n!>n^n e^{-n}$ and the inequality  $\frac{\ln m}{m}\le \frac{1}{e}$, we have
\begin{align*}
  n^{\frac{2}{d-1}}\left( n^{2m-d} (4 \delta)^{n} (n!) ^{-\frac 2{d-1}} \right)^{1/n}
  &\leq 
  n^{(2m-d)/n} (4 \delta e^{\frac 2{d-1}})
  \\ &\leq 
  \exp\left\{ 2m\, \frac  {\ln n}n  \right\} (4 \delta e^{\frac 2{d-1}})
  \\ &\leq  
  e^{\frac{4}{e}} 4 \Cr{sequfac} c_{\Phi} \ \epsilon^{\frac 1{2d^4}} e^{\frac 2{d-1}}
  \ = 
  f(\e)
\end{align*}
for $n >m^2$.

\subsection{Proof of Theorem \ref{thm:MainLowerBound}}

The proof of Theorem \ref{thm:MainLowerBound} is based on the following strategy: we construct a set of polytopes with $n$ facets and with bounded $\Phi$-content which we obtain by slightly perturbating a deterministic polytope which is as {\it regular} as possible.
We do so in a way which ensures that $Z$ is one of these polytopes with a high enough probability.
In Lemma \ref{lem:ConstructionBoundedPoly}, we proceed with the construction of the deterministic polytope and in Lemma \ref{lem:subsets}, we estimate the probability that $Z$ is a perturbation of this deterministic polytope.

The arguments rely on a particular assumption on the directional distribution $\varphi$.
A set $C \subset \S^{d-1}$ is called a cap of radius $r$ if it is the intersection of $\S^{d-1}$ with a ball of radius $r$ having its center on the sphere $\S^{d-1}$.
In the following we assume that $\varphi$ is well spread and thus there is a cap $C$ of radius $ r < 1 $ on the unit sphere and a constant $ \Cr{const:5} $ with 
\[
  \varphi ( \cdot )  > \Cr{const:5} \mathscr{H}^{d-1} ( \cdot ) .
\]
Without loss of generality we assume that the cap is centred at the point  $ \vect{e}_d = ( 0 , \ldots , 0 , 1 ) $.
Observe that since $\varphi$ is an even measure it is well spread on $C \cup (-C)$.

We start with two lemmata.
The first one essentially ensures that all polyhedra occurring in this section are contained in a big ball and hence are bounded.
The second lemma constructs sets $S_i \subset \HH$ such that the outer normals of the corresponding halfspaces are in $ C \cup (-C)$, their measure is of order $O(n^{-\frac{d+1}{d-1}})$, and their intersection forms a polytope with $n$ facets in $B^d$.
In the following we write $C(\vect{y}, \rho ) =B(\vect{y}, \rho) \cap \S^{d-1}$ for caps on the sphere.
\begin{lemma}
  \label{lem:ConstructionBoundedPoly}
  There exist a constant $\Cr{BoundedPoly1} = \Cr{BoundedPoly1}(d)$ and $m = m(d,r) < \Cr{BoundedPoly1}$ points $ \vect{y}_i \in C \cup (-C) $, $ i = 1 , \ldots , m$ such that the caps $C(\vect{y}_i, r/12)$ are pairwise disjoint and
  \[
    \bigcap\limits_{i=1}^m  H ( \vect{v}_i , 1 )^-
    \subset B ( \origin , 4 r^{-1} )
  \]
  for any $ \vect{v}_i \in C(\vect{y}_i, r/12)  \cap (C \cup (-C)) $, $ i = 1 , \ldots , m$.
\end{lemma}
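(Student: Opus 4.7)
The plan is to place the $\vect y_i$ on two ``latitude circles'' of $\S^{d-1}$, one inside $C$ around $\vect e_d$ and its antipode inside $-C$ around $-\vect e_d$, choosing both sparsely enough that the $r/12$-caps stay disjoint and densely enough (in the azimuthal direction) that the resulting intersection of halfspaces is bounded.

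More concretely, I will first fix a polar angle $\theta_0 \in (0, \theta_r)$, where $\theta_r = 2\arcsin(r/2)$ is the angular radius of $C$. Taking $\theta_0 = 2\arcsin(r/3)$ gives $2\sin(\theta_0/2) = 2r/3$, which both leaves room $r/3 > r/12$ for perturbation inside $C$ and yields $\sin \theta_0 \geq 4\sqrt{2}\, r/9$ and $\cos\theta_0 \geq 7/9$ for $r \in (0,1)$. On the $(d{-}2)$-sphere inside $\vect e_d^\perp$ I take a maximal $\tfrac{1}{3}$-separated set $\vect u_1,\dots,\vect u_M$; by maximality it is also a $\tfrac{1}{3}$-net, and its cardinality $M$ depends only on $d$. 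I then set
\[
\vect y_k = \cos\theta_0\, \vect e_d + \sin\theta_0\, \vect u_k \in C, \qquad \vect y_{M+k} = -\vect y_k \in -C,
\]
for $k=1,\dots,M$, giving $m := 2M$ points, with $\Cr{BoundedPoly1}(d)$ chosen as any integer strictly larger than $m$.

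Disjointness of the $r/12$-caps is then a routine check: within the same hemisphere $|\vect y_k - \vect y_\ell| = \sin\theta_0\,|\vect u_k - \vect u_\ell| \geq \sin\theta_0 /3 > r/6$, while across hemispheres the $\vect e_d$-components differ by $2\cos\theta_0 \geq 14/9 > r/6$ for $r<1$. For the bound on the polytope I would take any $\vect v_i \in C(\vect y_i, r/12) \cap (C \cup (-C))$ and any $\vect x \in \bigcap_i H(\vect v_i,1)^-$. The antipodal pairing $\vect y_{M+k} = -\vect y_k$ combined with the perturbation estimate $|\vect x \cdot (\vect v_i - \vect y_i)| \leq |\vect x|\, r/12$ yields
\[
|\vect x \cdot \vect y_k| \leq 1 + |\vect x|\, r/12 =: M_0 \quad \text{for every } k.
\]
Decomposing $\vect x = x_d\, \vect e_d + \vect x_\perp$ and picking from the net the direction $\vect u_{k^*}$ that best approximates $\mathrm{sign}(x_d)\,\vect x_\perp/|\vect x_\perp|$, the inequality for $k=k^*$ becomes
\[
\cos\theta_0\, |x_d| + \tfrac{17}{18}\,\sin\theta_0\,|\vect x_\perp| \leq M_0.
\]
Since the maximum of $\sqrt{a^2+b^2}$ under $\alpha a + \beta b \leq M_0$ ($a,b \geq 0$) is $M_0/\min(\alpha,\beta)$, and for small $r$ the binding constant is $\tfrac{17}{18}\sin\theta_0$, one gets $|\vect x| \leq C_3 M_0/r$ with $C_3 \approx 9/(2\sqrt 2 \cdot 17/18) < 2$, and rearranging $|\vect x| \leq C_3(1 + |\vect x|r/12)/r$ yields $|\vect x| \leq 4/r$ with room to spare.

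The main obstacle is the balance between two competing requirements: the caps $C(\vect y_i, r/12)$ must be pairwise disjoint, forcing Euclidean separation $\gtrsim r$ between the $\vect y_k$, while the normals must span enough directions to close the polytope. Happily both constraints live on the same scale, since the factor $\sin\theta_0 \sim r$ converts azimuthal separation on $\S^{d-2}$ into Euclidean separation on $\S^{d-1}$ of order $r$; this is precisely why a \emph{constant} azimuthal net density (depending only on $d$) works, keeping $m$ bounded by a dimension-only constant $\Cr{BoundedPoly1}(d)$. The only delicate point is picking $\theta_0$ near the edge of $C$ to maximize $\sin\theta_0/r$, but not so close that the $r/12$ perturbation leaves $C$; the choice $\theta_0 = 2\arcsin(r/3)$ is a conservative compromise that makes all numerical checks go through.
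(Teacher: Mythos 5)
Your proof is correct in substance and reaches the desired conclusion, but it takes a genuinely different route from the paper's. The paper first chooses a \emph{saturated} packing of caps of radius $r/12$ in $C\cup(-C)$ (so the centers are not specified explicitly, and $m$ depends on $r$), and then proves the boundedness by contradiction: if a point $\vect x$ at distance $4r^{-1}$ from the origin lies in $\bigcap H(\vect v_i,1)^-$, then a certain cap $C_{\vect x}=C(\vect e_1,\|\vect e_1 - \vect x_0\|)$ (obtained from the tangent line through $\vect x$ and a carefully chosen boundary point $\vect x_0$) must miss all the $\vect v_i$; but $C\cap C_{\vect x}$ is shown to contain a cap of radius $r/4$, which by saturation contains one of the $C(\vect y_i,r/12)$ and hence a $\vect v_i$ --- contradiction. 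Your construction is instead fully explicit: you place the $\vect y_i$ on two latitude circles at polar angle $\theta_0=2\arcsin(r/3)$ and their antipodes, using a $\frac13$-net on $\S^{d-2}$ for the azimuthal directions, and obtain boundedness directly from the antipodal symmetry (giving $|\vect x\cdot\vect y_k|\le M_0$) together with a pointwise support-function estimate using the net. What your approach buys is that $m$ is a dimension-only constant (independent of $r$), and the argument is quantitative without needing the contradiction machinery; what the paper's approach buys is that it decouples the choice of the $\vect y_i$ from the geometry of $C$, which makes the saturation property do all the work and keeps the final geometric step essentially planar. Both arguments establish the same scale matching between separation ($\gtrsim r$) and spread of directions.

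One small slip: in your last paragraph the constant $C_3$ is reported as $C_3 \approx 9/(2\sqrt2\cdot\tfrac{17}{18}) < 2$, but $9/(2\sqrt2\cdot\tfrac{17}{18})\approx 3.37$. The correct value from your own bounds is $C_3 = 81/(34\sqrt2)\approx 1.69$ (from $\beta\ge\tfrac{17}{18}\cdot\tfrac{4\sqrt2\,r}{9}=\tfrac{34\sqrt2}{81}r$), which then gives $|\vect x|\le \tfrac{1}{1-81/(408\sqrt2)}\cdot\tfrac{81}{34\sqrt2 r}\approx 1.96/r < 4/r$. The conclusion is unaffected, but the stated intermediate constant should be corrected.
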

\begin{proof}
  We choose a saturated packing of caps $C(\vect{y}_i, r/12) $ with $\vect{y_i}\in C \cup(-C)$, $ i = 1,\ldots,m$.
  Here we call a packing saturated if there is no possibility for adding another ball of radius $r/12$.
  Since the curvature of the sphere becomes negligible when $r\to0$, we have that $m$ is of the same order as a saturated packing of $(d-1)$-dimensional balls of radius $r/12$ in $r B^{d-1}$.
  Clearly this is independent from $r$ and therefore $m< \Cr{BoundedPoly1}$ for some constant $\Cr{BoundedPoly1}$ depending only on $d$.
  
  This implies first that $\bigcup C(\vect{y}_i, r/6)$ is a covering of $C \cup (-C)$. 
  Second, each cap $C(\vect z, r/4)$, $\vect z \in C$ contains one of the caps $C(\vect{y}_i, r/12)$, because $\vect z \in C(\vect y_i, r/6)$ for some $i =1, \dots, m$. 
  
  The rest of the proof follows from explicit geometric calculations.
  Assume in the contrary that there are $\vect v_i \in C(\vect y_i, r/12) \cap (C \cup (-C))$ such that
  $$    \bigcap\limits_{i=1}^{m} H ( \vect{v}_i , 1 )^- \nsubseteq B ( \origin , 4 r^{-1} ). $$
  This in particular implies that either 
  \[
    \vect{e}_d^\perp \cap \bigcap\limits_{\vect{v}_i \in C} H ( \vect{v}_i , 1 )^- \nsubseteq B ( \origin , 4 r^{-1} )
    \ \text{ or } \ 
    \vect{e}_d^\perp  \cap \bigcap\limits_{\vect{v}_i \in -C} H ( \vect{v}_i , 1 )^- \nsubseteq B ( \origin , 4 r^{-1} ).
  \]
  Recall that $C$ is a cap with center $\vect{e}_d$.
  Without loss of generality assume that\\$\vect x = (4 r^{-1}, 0, \dots, 0)$ is a point with $\| \vect x\| = 4 r^{-1}$ which is contained in $\bigcap_{\vect{v}_i \in C}  H ( \vect{v}_i , 1 )^-$.
  Let us define $\vect x_0= (r/4, 0, \dots, 0, \sqrt{1-r^2/16})$.
  By elementary trigonometric calculations the line through $\vect x$ and $\vect x_0$ is tangent to the sphere at $\vect x_0$.
  Because $\vect x$ is contained in $\bigcap H ( \vect{v}_i , 1 )^-$, none of the points $\vect v_i$ may be contained the cap $C_{\vect x}= C(\vect e_1, \| \vect e_1 - \vect x_0\|) $. 
  
  Next observe that the point $\vect x_C=(\sqrt{1-h^2}, 0, \dots, 0, h)$ with $h=1 - r^2/2$ is on the relative boundary of $C$ and in $C_{\vect x}$, and
  \[
    \|\vect x_C - \vect x_0\|
    \geq 
    \sqrt{1-h^2}  - \frac 14 r 
    \geq 
    \frac 34 r-  \frac 14 r 
    \geq \frac 12 r. 
  \]
  Hence $C \cap C_{\vect x}$ contains a cap of radius $r/4$.
  Yet this cap must contain one of the caps $C(\vect y_i, r/12)$ and thus one of the points $\vect v_i$, a contradiction. 
\end{proof}

In the following lemma we assume that there exists a cap $ C $ of radius $ r \in ( 0 , 1 ) $ of the sphere and a constant $ \Cr{const:5} $ with $ \varphi ( \cdot ) > \Cr{const:5} \mathscr{H}^{d-1} ( \cdot ) $ on $ C $.

\begin{lemma}
  \label{lem:subsets}
  There exists a constant $\Cl[univ]{vol} $ such that the following holds.
  For any $ n > \Cr{BoundedPoly1} $, there are disjoint subsets $ S_1 , \ldots , S_n \subset \HH  $ with 
  \[
    \mu (S_i) > \Cr{const:5} \Cr{vol} r^{d+2} n^{- \frac{d+1}{d-1}} 
  \]
  and for $ H_1 \in S_1 , \ldots , H_n \in S_n $ we have 
  \[
    \bigcap_i H_i^- \in \PPn
  \]
  and
  \begin{equation}
    \label{condition:3}
    \bigcap_i H_i^- \subset \B .
  \end{equation}
\end{lemma}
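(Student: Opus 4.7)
The plan is to define each $S_i$ as a thin slab of hyperplanes in $\HH$ around a reference configuration of $n$ hyperplanes tangent to the sphere $S(\origin, r/4)$, with outer normals well-spread on $C \cup (-C)$; the measure lower bound will then follow directly from the well-spread hypothesis on $\varphi$, while the polytope properties will be deduced from the near-regularity of the reference configuration combined with Lemma \ref{lem:ConstructionBoundedPoly}.

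Concretely, for $n$ large enough I would pack $n$ disjoint caps $C(\vect{z}_i, \rho_n) \subset C \cup (-C)$ of angular radius $\rho_n := c_1 r n^{-1/(d-1)}$, with $c_1$ a small universal constant, arranged so that $m$ of the centres $\vect{z}_i$ lie within $r/24$ of the points $\vect{y}_1, \ldots, \vect{y}_m$ furnished by Lemma \ref{lem:ConstructionBoundedPoly}. This is possible because $C$ has $(d{-}1)$-surface area of order $r^{d-1}$, so a saturated packing of caps of radius $\rho_n$ produces far more than $n$ caps; one can first place $m$ special ones near the $\vect{y}_j$'s and fill the rest arbitrarily. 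Setting $t_0 := r/4$ and $\tau_n := c_2 r^3 n^{-2/(d-1)}$ (with $c_2$ small, to be fixed below), I would take
$$ S_i := \{ H(\vect{u}, t) \in \HH : \vect{u} \in C(\vect{z}_i, \rho_n/2),\ t \in (t_0 - \tau_n, t_0)\}, $$
which are pairwise disjoint by construction. The measure bound is then immediate from \eqref{def:Thetamu}: since $C(\vect{z}_i, \rho_n/2) \subset C \cup (-C)$ and $\varphi$ is even and well-spread,
$$ \mu(S_i) \;=\; \tau_n \, \varphi(C(\vect{z}_i, \rho_n/2)) \;\geq\; \Cr{const:5}\, \Cr{vol}\, r^{d+2}\, n^{-(d+1)/(d-1)}. $$

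Next I would verify the polytope properties. Fix any $H_i = H(\vect{u}_i, t_i) \in S_i$ and write $P = \bigcap_{i=1}^n H_i^-$. The $m$ distinguished normals $\vect{u}_{i(j)}$ lie within $\rho_n/2 + r/24 < r/12$ of $\vect{y}_j$; since $t_i \leq t_0 = r/4$ gives $H(\vect{u}_i, t_i)^- \subset H(\vect{u}_i, t_0)^-$, Lemma \ref{lem:ConstructionBoundedPoly} rescaled by $t_0$ yields $P \subset \bigcap_{j=1}^m H(\vect{u}_{i(j)}, t_0)^- = (r/4) \bigcap_j H(\vect{u}_{i(j)}, 1)^- \subset B^d$. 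To see $f(P) = n$, I would check that each $H_i$ supports $P$ on a $(d{-}1)$-dimensional set. For the reference polytope $P_0 = \bigcap H(\vect{z}_i, t_0)^-$, the $2\rho_n$-angular separation of the $\vect{z}_i$'s forces the contact point $t_0 \vect{z}_i$ to sit at Euclidean distance at least $t_0(1 - \cos 2\rho_n)$, which is of order $t_0 \rho_n^2$, from every other hyperplane $H(\vect{z}_j, t_0)$. Passing to $(\vect{u}_i, t_i) \in S_i$ displaces the candidate contact point $t_i \vect{u}_i$ by at most $t_0 \rho_n/2 + \tau_n$, and a routine estimate shows that for $c_2$ small enough relative to $c_1^2$ the signed distance from $t_i \vect{u}_i$ to each perturbed $H_j^-$, $j \neq i$, remains bounded below by a positive multiple of $t_0 \rho_n^2$; hence a full $(d{-}1)$-dimensional neighbourhood of $t_i \vect{u}_i$ in $H_i$ lies in $P$, so $H_i$ contributes one facet and $P \in \PPn$.

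The main obstacle is this last combinatorial-stability step: it requires careful quantitative tracking of how the simultaneous angular and translational perturbations of the $n$ reference hyperplanes propagate to their pairwise intersections, so as to fix $c_1$ and $c_2$ simultaneously small enough to preserve the facet structure while still delivering the measure lower bound with the sharp power $r^{d+2} n^{-(d+1)/(d-1)}$ claimed.
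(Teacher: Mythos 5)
Your proposal takes essentially the same approach as the paper: pack $n$ disjoint small caps of radius $\sim rn^{-1/(d-1)}$ inside $C\cup(-C)$ (with some anchored near the $\vect{y}_j$ from Lemma \ref{lem:ConstructionBoundedPoly} to invoke it for boundedness), define $S_i$ as a thin slab of hyperplanes with offset width $\sim r^3 n^{-2/(d-1)}$, and read off the measure bound, the inclusion in $\B$, and the facet count. The facet-count step you flag as the main obstacle is handled in one line in the paper by choosing (before rescaling by $r/4$) the offset range $[1-\rho^2/8,1]$, so that the identity $C(\vect{v}_i,\rho/2)=H(\vect{v}_i,1-\rho^2/8)^+\cap\S^{d-1}$ and the disjointness of these caps give $t_i\langle\vect{v}_i,\vect{v}_j\rangle\le\langle\vect{v}_i,\vect{v}_j\rangle<1-\rho^2/8\le t_j$ for $j\neq i$ directly; this is precisely your comparison $c_2\lesssim c_1^2$, and it sidesteps your intermediate displacement bound $t_0\rho_n/2+\tau_n$, a quantity that is actually much larger than the reference gap $\sim t_0\rho_n^2$ and hence not by itself sufficient.
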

\begin{proof}
  Consider the $ m < \Cr{BoundedPoly1} $ caps $C(\vect y_i, r/12)$ which have been constructed in Lemma \ref{lem:ConstructionBoundedPoly}, and fix $ n > \Cr{BoundedPoly1}$.
  In each of the sets $C(\vect y_i, r/12) \cap (C\cup-C)$ we produce an optimal packing of $\lceil n/m \rceil$ smaller caps $C(\vect z_j,\rho)$ where we can choose $ \rho$ such that it satisfies 
  \[
    \Cl[univ]{radiusrho} n^{- \frac 1{d-1}} r
    \le \rho \le \frac{r}{12} 
  \]
  with a constant $\Cr{radiusrho}$ independent of $r, n$ and $m$. Observe that the number of caps constructed in this way is between 
  $n $ and $n+m $.
  We choose precisely $n$ of these caps $C(\vect z_i, \rho)$ in such a way that in each set $C(\vect y_i, r/12)\cap (C\cup-C)$ there is at least one cap $C(\vect z_i, \rho)$.
  
  As already used above, a cap of radius $t$ has height $t^2/2$.
  Let $\vect v_i$ be arbitrary points in $C(\vect z_i, \rho/2)$, $i=1, \dots ,n$.
  Since each cap 
  \[
    C\left(\vect v_i, \frac {\rho}2  \right)
    = H \left(\vect v_i, 1-\frac 12 \Big( \frac{\rho}2 \Big)^2    \right)^+ \cap \S^{d-1}
  \]
  is contained in the cap $ C(\vect z_i, \rho)$, it is disjoint from all other caps $ C(\vect z_j, \rho)$, and thus also disjoint from all other caps $ C(\vect v_j, \rho/2)$.
  Hence for arbitrary $r_i $ with $ 0 \leq r_i \leq \rho/2$, all points $ (1- r_i^2/2) \, \vect v_i$ are on the boundary of $\cap_{i=1}^n  H ( \vect{v}_i , 1- r_i^2/2)^-$ and thus this intersection has $n$ facets.
  
  Since each set $C(\vect{y}_i,r/12)$ contains a cap $C(\vect{z}_i,\rho)$, there are $m$ points $\vect{v}_i$,\linebreak[4] {$\vect{v}_1, \cdots, \vect{v}_m$} say, which belong to $C(\vect{y}_1,r/12),\cdots,C(\vect{y}_m,r/12)$ respectively.
  Combining Lemma \ref{lem:ConstructionBoundedPoly} applied to $\vect{v}_1,\cdots,\vect{v}_m$ and the considerations above, we obtain:
  there are pairwise disjoint sets 
  \[
    T_i=
    \left\{ H(\vect v, t):\ \vect v \in C \Big(\vect{z}_i, \frac \rho{2}\Big),\ t \in \Big[1-\frac 12 \, \Big(\frac \rho {2} \Big)^2  ,1 \Big]\right\} \subset \HH , \ i=1, \dots, n,
  \]
  such that for an arbitrary $n$-tuple $H(\vect v_i, t_i) \in T_i$, $i=1, \dots, n$, we have
  \[
    \bigcap\limits_{i=1}^n  H ( \vect{v}_i , t_i)^-
    \subset \bigcap\limits_{i=1}^m  H ( \vect{v}_i , t_i)^- \subset B ( \origin , 4 r^{-1} ) 
    \ \ \text{ and }\  \ 
    \bigcap\limits_{i=1}^n  H ( \vect{v}_i , t_i)^- \in \PP_n .
  \]
   
  We normalize such that $B ( \origin , 4 r^{-1} )  $ is replaced by the unit ball and define
  \[ 
    S_i = 
    \left\{ H(\vect v, t):\ \vect v \in C \Big(\vect{z}_i, \frac \rho{2}\Big), t \in \frac r4\Big[1-\frac 12 \, \Big(\frac \rho {2} \Big)^2  ,1 \Big]\right\} 
    = \frac r4 T_i 
    \subset \HH 
  \]
  for $i=1, \dots, n.$.
  The sets $S_i$ have measure at least 
  \[
    \mu(S_i) \geq \Cr{const:5} \mathscr{H}^{d-1}\Big(C \Big(\vect{z}_i, \frac \rho{2}\Big)  \Big) \frac{ r \rho^2 }{ 32 }
    \geq   
    \Cr{const:5} \Cr{vol} r^{d+2} n^{- \frac{d+1}{d-1}} 
  \]
  since $\rho \geq \Cr{radiusrho} n^{- \frac 1{d-1}} r  $, where $\Cr{vol}$ is a constant depending only on $d$.
  This yields $n$ sets $S_i$ with the desired properties.
\end{proof}

We point out that because of Lemma \ref{lem:subsets}  and condition \eqref{condition:3} therein, having $H_{\sigma(1)}\in S_1,\cdots,H_{\sigma(n)}\in S_n$ for some permutation $\sigma\in\mathfrak{S}_n$ implies that $\Phi(\cap_{i=1}^nH_i^{-})<1$, $\cent (\cap_{i=1}^nH_i^{-})\in \B$ and $\cap_{i=1}^nH_i^{-}\in \PPn$.
Using this and \eqref{eq:ProbaIntegralFormbis} yields
\begin{align*}
  \P ( f(Z)=n) 
  &=
    \frac{ \gamma^d }{ \IntMos \kappa_d} \frac{(n-d)!} {n!}
    \int\limits_{\HS^n}
    \1 \left( \cent (\cap_{i=1}^nH_i^{\epsilon_i}) \in \B
    \right)
    \1 \left( \Phi(\cap_{i=1}^nH_i^{\epsilon_i}) <1  \right)
    \\& \phantom{{} = \frac{ \gamma^d }{ \IntMos \kappa_d} \frac{(n-d)!} {n!} \int\limits_{\HS^n} {}}
    \1 \left( \cap_{i=1}^nH_i^{\epsilon_i} \in \PPn \right) 
    \MeasMultiSimple{\MeasHalfSpaceMu}{n}{H^\epsilon},
  \\&\geq
    \frac{ \gamma^d }{ \IntMos \kappa_d} (n-d)!
    \int\limits_{\HH ^n}
    \1 \left( H_1 \in S_1 \right)
    \cdots
    \1 \left( H_n \in S_n \right) 
    \MeasMultiSimple{\mu}{n}{H}
  \\&=
    \frac{ \gamma^d }{ \IntMos \kappa_d}  (n-d)!  \ 
    \mu ( S_1 ) \cdots \mu ( S_n ) 
  \\&> 
    \frac{ \gamma^d }{ \IntMos \kappa_d}  (n-d)!  \ \left( \Cr{const:5} \Cr{vol} r^{d+2} n^{- 1- \frac{2}{d-1}}  \right)^n 
\end{align*}
for $n \geq \Cr{BoundedPoly1}$.
Stirling's approximation  $ n! > n^{n} e^{-n} $ implies 
\[
  \frac{ ( n - d )! }{ n^n }  
  > \frac{n!}{n^{n+d}} 
  > e^{-(d+1)n}  
\]
With $\Cr{LowerBound1} = \min\left( 1 , \frac1{\kappa_d} \right) \Cr{vol} e^{-(d+1)} $, this implies immediately the statement of Theorem \ref{thm:MainLowerBound}.

\section{Big Cells}\label{sec:bigcells}
In this section we are interested in the behaviour of the typical cell $Z$ when $\Phi (Z)$ tends to infinity.
In particular we aim at proving results on the asymptotic behaviour of $\P(\Phi(Z) > a)$ (Theorem \ref{mainthm:DistribPhi}), on the shape of such big cells  in the general case (Theorem \ref{mainthm:FlatBigPhi}) and on the existence of a limit shape in the particular case when $\varphi$ is concentrated on a finite set of points (Theorem \ref{mainthm:nmaxshape}).

To get Theorem \ref{mainthm:FlatBigPhi}, we need a new upper-bound for the probability of the event $\{\Phi(Z)>a\}$ intersected with the event that  the cell is $(\e \colon i,j)$-elongated, which is given below.
\begin{theorem}
  \label{thm:boundPhiFlat}
  Assume  $1 \leq i < j \leq \lceil (d-1) / 2 \rceil $.  
  There exist constants $\Cl[univ]{eps<}$ and $\Cl[univ]{lemmaelong2}$, such that for any $ \epsilon< \Cr{eps<} $ and for any $a \geq \gamma ^{-1} \e^{- (2d+3)}, $
  \[ 
    \P \left( \Phi(Z) > a ,\ \frac{V_j(Z)^{\frac 1j}}{V_i(Z)^{\frac 1i}} < \epsilon \right)
    \leq
    \exp \left( -\gamma a+  \Cr{lemmaelong2} \epsilon^{\frac {1}{6d^4}} (\gamma a)^{\frac {d-1}{d+1}}   \right)  .
  \]
\end{theorem}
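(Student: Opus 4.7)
The plan is to decompose by the number of facets and combine the Complementary Theorem (Theorem \ref{thm:ComplementaryThm}) with the elongation estimate of Theorem \ref{thm:boundsManyFacetsIsoperimeter}. Writing $E_\epsilon := \{V_j(Z)^{1/j}/V_i(Z)^{1/i} < \epsilon\}$, the conditional independence of $\Phi(Z)$ and $\shape(Z)$ given $\{f(Z) = n\}$ yields
\[
  \P(\Phi(Z) > a,\, E_\epsilon) = \sum_{n \geq d+1} \P(\Phi(Z) > a \mid f(Z) = n)\, \P(f(Z) = n,\, E_\epsilon).
\]
Since conditionally $\Phi(Z)$ is $\Gamma_{\gamma,n-d}$ distributed, the standard tail estimate gives $\P(\Phi(Z) > a \mid f(Z) = n) \leq 2\,e^{-\gamma a}(\gamma a)^{n-d-1}/(n-d-1)!$ whenever $\gamma a \geq 2(n-d-1)$.

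Set $m := \lfloor \Cr{elong1}\epsilon^{-(d-2)}\rfloor$. For $n > m^2$, Theorem \ref{thm:boundsManyFacetsIsoperimeter} applies, and after Stirling's approximation of $(n-d-1)!$ the summand (up to polynomial factors in $n$) becomes
\[
  \frac{\gamma^d}{\IntMos}\,e^{\Cr{thmelong1}\epsilon^{-2(d-1)}}\,e^{-\gamma a}\left(\frac{A}{n^{(d+1)/(d-1)}}\right)^{n}, \quad A := e\,\Cr{thmelong2}\,\epsilon^{1/(2d^4)}\,\gamma a.
\]
Writing $\alpha := (d+1)/(d-1)$, the map $n \mapsto (A/n^\alpha)^n$ is maximized at $n^\star = (A/e^\alpha)^{1/\alpha}$ with value $e^{\alpha n^\star}$, and $\alpha n^\star = (\alpha/e)\, A^{1/\alpha} \leq c\,\epsilon^{(d-1)/((d+1)\cdot 2d^4)}(\gamma a)^{(d-1)/(d+1)}$. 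The algebraic inequality $6(d-1) \geq 2(d+1)$, valid for $d \geq 2$, shows $(d-1)/((d+1)\cdot 2d^4) \geq 1/(6d^4)$, and since $\epsilon < 1$ this gives the target bound $\alpha n^\star \leq c\,\epsilon^{1/(6d^4)}(\gamma a)^{(d-1)/(d+1)}$. The contribution from $n$ far from $n^\star$ is controlled by standard large-deviation tails, so the sum over $n > m^2$ is dominated by its maximal term up to a polynomial correction.

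For the residual range $d+1 \leq n \leq m^2$, use the trivial bound $\P(f(Z) = n, E_\epsilon) \leq 1$ together with the same Gamma tail; the hypothesis $\gamma a \geq \epsilon^{-(2d+3)}$ ensures $\gamma a \gg m^2 = O(\epsilon^{-2(d-2)})$, so this contribution is dominated by $e^{-\gamma a}(e\gamma a/m^2)^{m^2}$, whose logarithm is polynomial in $\epsilon^{-1}$ and $\log(\gamma a)$ and hence easily absorbed. Finally, the prefactor $e^{\Cr{thmelong1}\epsilon^{-2(d-1)}}$ is absorbed into $\exp(\Cr{lemmaelong2}\epsilon^{1/(6d^4)}(\gamma a)^{(d-1)/(d+1)})$ provided $\gamma a \geq c'\,\epsilon^{-2(d+1) - O(1/d^3)}$, and the hypothesis $\gamma a \geq \epsilon^{-(2d+3)}$ is precisely calibrated to this bound since $2(d+1) + O(1/d^3) < 2d+3$ for $d \geq 2$. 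The main obstacle is precisely this three-scale bookkeeping: one must simultaneously balance the $\epsilon^{-2(d-1)}$ factor coming from the prefactor of Theorem \ref{thm:boundsManyFacetsIsoperimeter}, the per-facet gain $\epsilon^{1/(2d^4)}$, and the $\epsilon$-dependent critical scale $n^\star$, and verify that the precise exponent $2d+3$ in the lower bound on $a$ is exactly what is needed to make all three scales consistent.
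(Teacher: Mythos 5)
Your proposal follows the same overall architecture as the paper: decompose by number of facets using the Complementary Theorem, invoke Theorem~\ref{thm:boundsManyFacetsIsoperimeter} for the elongated-cell bound, and deduce that after Stirling the $n$-th summand is of order $e^{-\gamma a}(A/n^{\alpha})^n$ with $\alpha=(d+1)/(d-1)$. The differences are organizational rather than conceptual. The paper first exchanges the order of summation (Lemma~\ref{eq:SumRlShape}), which converts the problem into bounding the explicit power series $e^{-\gamma a}\sum_{l\ge 0} r^\e_{l+d+1}(\gamma a)^l/l!$; it then controls the tail sums $r^\e_{l+d+1}$ once and for all in Lemma~\ref{lemma:boundsManyFacetsIsoperimeter}, and feeds the result into the ready-made analytic bound $\sum_{n\ge0} x^n/(n!)^\alpha < \exp(\alpha x^{1/\alpha})$ of Lemma~\ref{lem:upperboundExpModif}. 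This packaging yields the estimate with no explicit maximization, no case split, and no validity window to track.

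You instead keep the sum in the form $\sum_n \P(\Phi(Z)>a\mid f(Z)=n)\P(f(Z)=n,E_\e)$ and bound each factor separately, which forces you to handle three regimes by hand. Two small points deserve attention. First, your Gamma tail estimate $\P(\Phi(Z)>a\mid f(Z)=n)\le 2e^{-\gamma a}(\gamma a)^{n-d-1}/(n-d-1)!$ is only valid for $\gamma a\ge 2(n-d-1)$; for $n$ past $\gamma a/2$ you must instead bound the conditional probability trivially by one. This is harmless because the super-exponential decay $n^{-2n/(d-1)}$ from Theorem~\ref{thm:boundsManyFacetsIsoperimeter} makes that range negligible against $e^{-\gamma a}$, but it is exactly the kind of case split that the paper's Lemma~\ref{lem:upperboundExpModif} is designed to avoid. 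Second, the phrase ``controlled by standard large-deviation tails'' and the claim that the small-$n$ contribution is ``easily absorbed'' are precisely the places where the paper does careful work (the summability argument inside Lemma~\ref{lemma:boundsManyFacetsIsoperimeter} and the $\e^{-2(d-1)}\le(\e^{1/(2d^4)}\gamma a)^{(d-1)/(d+1)}$ absorption step). Your bookkeeping of the exponents and the role of $\gamma a\ge\e^{-(2d+3)}$ is correct, so filling in these details would complete the argument; the paper simply front-loads them into reusable lemmas.
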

Actually, the bound in Theorem \ref{thm:boundPhiFlat} is close to the upper-bound from Theorem \ref{mainthm:DistribPhi} but with a constant in front of $(\gamma a)^{\frac {d-1}{d+1}}$ which is arbitrarily small.

In the next subsection, we show how to deduce easily Theorem \ref{mainthm:FlatBigPhi} from Theorems \ref{mainthm:DistribPhi} and \ref{thm:boundPhiFlat}.
The rest of Section \ref{sec:bigcells} is devoted to the proof of Theorems \ref{mainthm:DistribPhi}, \ref{thm:boundPhiFlat} and \ref{mainthm:nmaxshape}.

\subsection{Deducing Theorem \ref{mainthm:FlatBigPhi} from Theorems \ref{mainthm:DistribPhi} and \ref{thm:boundPhiFlat}}
Let us assume that $\varphi$ is well spread and that  $1 \leq i < j \leq \lceil (d-1)/2 \rceil$.
Then the lower-bound from Theorem \ref{mainthm:DistribPhi} together with Theorem \ref{thm:boundPhiFlat} imply that for any $ \epsilon< \Cr{eps<} $ and for any $a \geq \gamma ^{-1} \max\{ \Cr{const:LowerA}, \e^{- (2d+3)}\}, $ we have
\[ 
  \P \left( \ \frac{V_j(Z)^{\frac 1j}}{V_i(Z)^{\frac 1i}} < \epsilon \mid \Phi(Z) > a \right)
  \leq
  \exp \left( \Big( \Cr{lemmaelong2} \epsilon^{\frac {1}{6d^4}} - \Cr{const:LowerBoundPhiContent}  \Big)(\gamma a)^{\frac {d-1}{d+1}}  \right) .
\]
For $\epsilon<\left(\frac{\Cr{const:LowerBoundPhiContent}}{\Cr{lemmaelong2}}\right)^{6d^4}$, the conditional probability above goes to zero when $a$ goes to $\infty$. This shows Theorem \ref{mainthm:FlatBigPhi}.

\subsection{Proof of Theorem \ref{mainthm:DistribPhi}}
\label{sec:ProofDistribPhi}
We start with three intermediary lemmas: Lemma \ref{lem:SumRl} builds upon the Complementary Theorem to get a rewriting of the distribution tail of $\Phi(Z)$ as a function of the distribution tail of $f(Z)$.
In Lemma \ref{lem:ApproxRbyQ}, we deduce from Theorems \ref{mainthm:UpperBoundfn} and \ref{mainthm:LowerBoundfn} respectively upper and lower-bounds for the distribution tail of $f(Z)$.
Finally, Lemma \ref{lem:upperboundExpModif} contains analytical estimates for some subexponential power series.  

In the sequel, we use the abbreviations $ q_n := \P ( f(Z)=n ) $ and $ r_n := \sum_{ k \geq n} q_k $ for every $n\ge (d+1)$.

In the following lemma, we rewrite the probability $\P (\Phi(Z) > a )$ as a power series in $a$.
\begin{lemma}\label{lem:SumRl}
  For every $a>0$, we have
  \[
    \P (\Phi(Z) > a )
    = e^{-\gamma a} \sum _{ n \geq 0 } r_{n+d+1} \frac { (\gamma a)^{n} } { n! }  
  \]
\end{lemma}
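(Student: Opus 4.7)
The plan is to condition on the number of facets of the typical cell and then exploit the explicit Gamma distribution of $\Phi(Z)$ provided by the Complementary Theorem. Concretely, writing
\[
  \P(\Phi(Z) > a) = \sum_{n \geq d+1} q_n \,\P(\Phi(Z) > a \mid f(Z)=n),
\]
part (2)(b) of Theorem \ref{thm:ComplementaryThm} gives that the conditional distribution of $\Phi(Z)$ given $f(Z)=n$ is $\Gamma_{\gamma,n-d}$, hence admits the density $t \mapsto \frac{\gamma^{n-d}}{(n-d-1)!} t^{n-d-1} e^{-\gamma t}$ on $(0,\infty)$.

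Next I would invoke the classical Gamma--Poisson tail identity: for any integer $k\ge 1$,
\[
  \int_a^\infty \frac{\gamma^k}{(k-1)!} t^{k-1} e^{-\gamma t}\, \mathrm{d}t
  = e^{-\gamma a}\sum_{j=0}^{k-1} \frac{(\gamma a)^j}{j!},
\]
which one can prove in one line by repeated integration by parts (or by recognising the left-hand side as $\P(N<k)$ with $N$ Poisson of parameter $\gamma a$). Applied to $k=n-d$, this yields
\[
  \P(\Phi(Z) > a \mid f(Z)=n) = e^{-\gamma a}\sum_{j=0}^{n-d-1} \frac{(\gamma a)^j}{j!}.
\]

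It then remains a purely combinatorial manipulation: substitute this expression into the decomposition above, reindex with $m=n-d-1$, and swap the order of the (absolutely convergent, since $\sum q_n=1$) double sum to group terms by the power of $\gamma a$. The coefficient of $(\gamma a)^j/j!$ becomes
\[
  \sum_{m\ge j} q_{m+d+1} = \sum_{k\ge j+d+1} q_k = r_{j+d+1},
\]
which gives exactly the claimed identity. There is no real obstacle here; the only point meriting a word is the justification of the swap of sums, which is immediate by Tonelli since every term is non-negative.
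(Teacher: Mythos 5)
Your proof is correct and follows essentially the same path as the paper: condition on $f(Z)=n$, use part (2)(b) of the Complementary Theorem for the $\Gamma_{\gamma,n-d}$ law of $\Phi(Z)$, evaluate the tail by integration by parts (equivalently the Gamma--Poisson identity), and swap the double sum to collect $r_{m+d+1}$ as the coefficient of $(\gamma a)^m/m!$. Your remark that Tonelli justifies the interchange is a small point of care that the paper leaves implicit.
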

\begin{proof}
  Because of the Complementary Theorem \ref{thm:ComplementaryThm} we have for every $a>0$
  \begin{align*}
    \P (\Phi(Z) > a )
    &= \sum _{ n \geq d+1 } q_{n} \P ( \Phi(Z) > a \mid  f(Z)=n  )
    \\
    &= \sum _{ n \geq d+1} q_{n} \int\limits_a^\infty e^{-\gamma t} \frac { \gamma^{n-d} t^{n-d-1} } { (n-d-1)! } \dint t.
  \end{align*}
  Now we recall that iterated integrations by parts show that for every $n\ge (d+1)$,
  \[
    \int\limits_a^\infty e^{-\gamma t} \frac{\gamma^{n-d} t^{n-d-1}}{(n-d-1)!} \dint t
    = e^{-\gamma a} \sum_{m=0}^{n-d-1} \frac{(\gamma a)^m}{m!} .
  \]
  Consequently, we obtain that
  \[
    \P (\Phi(Z) > a )
    = e^{-\gamma a} \sum _{ n \geq d+1 } \sum _{m=0} ^{n-d-1} q_{n} \frac { (\gamma a)^{m} } { m! }
    = e^{-\gamma a} \sum _{ m \geq 0 } r_{m+d+1} \frac { (\gamma a)^{m} } { m! },
  \]
  which shows Lemma \ref{lem:SumRl}.  
\end{proof}

The relation from Lemma \ref{lem:SumRl} indicates that in order to bound $\P(\Phi(Z)>a)$, we need to find bounds for $r_{n+d+1}$. This is done in the next lemma.
\begin{lemma}
  \label{lem:ApproxRbyQ}
  There exists a constant $\Cl[univ]{rqmult}$ depending on $\varphi$ such that for $n\geq 0$ we have
  \[ 
    r_{n+d+1} 
    <  \Cr{rqmult}^{n} (n!)^{-\frac 2{d-1}}.
  \]
  Assume that $\varphi$ is well spread.  Then there exists a constant $\Cl[univ]{const:LowerBound} >0 $ depending on $\varphi$ such that for $n\geq 0$ we have
  \[ 
    r_{n+d+1} 
    \geq \Cr{const:LowerBound}^n (n!)^{-\frac 2{d-1}} .
  \]
\end{lemma}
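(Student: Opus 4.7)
The plan is to deduce the upper bound from Theorem \ref{mainthm:UpperBoundfn} combined with the geometric-decay recurrence of Theorem \ref{thm:UpperBoundReccurence}, and the lower bound from the single-term estimate $r_{n+d+1}\geq q_{n+d+1}$ together with Theorem \ref{mainthm:LowerBoundfn}. In both cases the remaining work is to convert the factor $k^{-2k/(d-1)}$ into $(k!)^{-2/(d-1)}$ via Stirling, absorbing the discrepancy into the base of an exponential.

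For the upper bound, I would first observe that Theorem \ref{thm:UpperBoundReccurence} gives $q_k/q_{k-1}\leq \Cr{decreases2} c_\Phi k^{-2/(d-1)}$, hence $q_k/q_{k-1}\leq 1/2$ as soon as $k\geq k_0$ for some $k_0$ depending on $\varphi$. Consequently, for $n\geq k_0$,
\[
r_{n+d+1}=\sum_{k\geq n+d+1} q_k \;\leq\; q_{n+d+1}\sum_{j\geq 0} 2^{-j}\;=\;2\,q_{n+d+1}.
\]
Then Theorem \ref{mainthm:UpperBoundfn} yields $q_{n+d+1}<\Cr{upperbound2TH1}^{n+d+1}(n+d+1)^{-2(n+d+1)/(d-1)}$, and the trivial bound $(n+d+1)^{n+d+1}\geq (n+d+1)!\geq n!$ gives $(n+d+1)^{-2(n+d+1)/(d-1)}\leq (n!)^{-2/(d-1)}$. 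This provides $r_{n+d+1}\leq 2\Cr{upperbound2TH1}^{n+d+1}(n!)^{-2/(d-1)}$, which becomes $\Cr{rqmult}^n(n!)^{-2/(d-1)}$ after enlarging the base; values $n<k_0$ are handled by enlarging $\Cr{rqmult}$ further (a finite adjustment).

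For the lower bound, assume $\varphi$ is well spread and use $r_{n+d+1}\geq q_{n+d+1}$ together with Theorem \ref{mainthm:LowerBoundfn} to get $r_{n+d+1}>\Cr{lowerbound2TH2}^{n+d+1}(n+d+1)^{-2(n+d+1)/(d-1)}$. The conversion in this direction is the only mildly delicate computation: write $(n+d+1)^{n+d+1}=(n+d+1)^{d+1}\,(n+d+1)^n$, use $(n+d+1)^n\leq n^n e^{d+1}$ and the Stirling lower bound $n!\geq (n/e)^n$ (hence $n^n\leq e^n n!$) to obtain
\[
(n+d+1)^{n+d+1}\;\leq\; (n+d+1)^{d+1}e^{d+1}\,e^n\,n!.
\]
Raising to the power $-2/(d-1)$ gives $(n+d+1)^{-2(n+d+1)/(d-1)}\geq C_d^{-n}(n!)^{-2/(d-1)}$ where $C_d$ swallows the polynomial prefactor and the $e^n$ factor (for $n$ bigger than a constant depending on $d$). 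Combining with the previous inequality yields $r_{n+d+1}\geq \Cr{const:LowerBound}^n(n!)^{-2/(d-1)}$ for $n$ large enough; small $n$ are again handled by shrinking $\Cr{const:LowerBound}$.

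No step looks to be a real obstacle: the whole argument is a careful bookkeeping of Stirling-type inequalities. The mildly annoying point is the index shift by $d+1$ in $r_{n+d+1}$, which forces me to verify that the absorbed polynomial and constant factors can be folded into the exponential base uniformly in $n$; that is what makes the constants $\Cr{rqmult}$ and $\Cr{const:LowerBound}$ depend on $\varphi$ (through $c_\Phi$ in the upper bound and through $\Cr{lowerbound2TH2}$ in the lower bound).
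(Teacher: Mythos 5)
Your argument is correct and the lower bound is essentially the paper's: take $r_{n+d+1}\geq q_{n+d+1}$, apply Theorem \ref{mainthm:LowerBoundfn}, and convert $(n+d+1)^{-2(n+d+1)/(d-1)}$ into $(n!)^{-2/(d-1)}$ by Stirling plus absorbing $(n+d+1)^{d+1}$ into the base. Your bookkeeping (splitting $(n+d+1)^{n+d+1}=(n+d+1)^{d+1}(n+d+1)^n$, then $(n+d+1)^n\leq e^{d+1}n^n$) is a slightly different arrangement than the paper's (which uses $(n+d+1)!\leq (n+d+1)^{d+1}n!$ and $(n+d+1)^{d+1}\leq (d+1)^{n+d+1}$), but both reduce to the same Stirling-type absorbing argument.

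The upper bound is where you genuinely diverge from the paper. The paper bounds the tail series directly: it inserts $q_k<\Cr{upperbound2TH1}^k k^{-2k/(d-1)}$ into $r_{n+d+1}=\sum_{k\geq n+d+1}q_k$, reindexes $k=n+j$, and uses the superadditivity $(n+j)^{n+j}\geq n^n j^j$ to factor out $\Cr{upperbound2TH1}^n n^{-2n/(d-1)}$ in front of a convergent sum, then applies $n^{-n}\leq(n!)^{-1}$. You instead invoke the recurrence in Theorem \ref{thm:UpperBoundReccurence} to get $q_k\leq\tfrac12 q_{k-1}$ for $k$ large, so $r_{n+d+1}\leq 2 q_{n+d+1}$, and then a single application of Theorem \ref{mainthm:UpperBoundfn} plus $(n+d+1)^{n+d+1}\geq n!$ does the conversion. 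Your route is arguably cleaner (no superadditivity needed, no tail-sum reindexing) at the cost of reaching one level further back into the machinery: you use the recurrence theorem itself rather than only its corollary, Theorem \ref{mainthm:UpperBoundfn}. Both are sound and about equally short.

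One small remark that applies equally to your proof and to the paper's: at $n=0$ the claimed strict inequality $r_{d+1}<\Cr{rqmult}^0(0!)^{-2/(d-1)}=1$ cannot hold, since every bounded cell has at least $d+1$ facets and hence $r_{d+1}=1$; enlarging the constant does not help because $\Cr{rqmult}^0=1$ regardless. This is a harmless boundary issue in the lemma's statement (replaceable by $n\geq 1$, or by $\leq$) and is irrelevant to how the lemma is used downstream.
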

\begin{proof}
  We start with the upper-bound.
  By Theorem \ref{mainthm:UpperBoundfn} we have for $n\geq d+1$,
  \begin{equation}
    \label{eq:UpperBoundBis}
    q_n <  \Cr{upperbound2TH1}^n \, n^{-\frac{2n}{d-1}}.
  \end{equation}
  with some constant $\Cr{upperbound2TH1}>0 $ depending on $\varphi$. 
  By \eqref{eq:UpperBoundBis} we have, 
  \begin{align*}
    r_{n+d+1} 
    & \leq 
    \sum_{k \geq n+d+1} \Cr{upperbound2TH1}^k \, k^{-\frac{2}{d-1}k} 
    \\&\leq
    \Cr{upperbound2TH1}^{n} n^{-\frac{2}{d-1}n}  \sum_{k \geq d+1} \Cr{upperbound2TH1}^k \, k^{-\frac{2}{d-1}k} .
  \end{align*}
  We use $n^{-n} \leq (n!)^{-1}$, and observe that the remaining sum is convergent and independent of $n$. Hence in order to get the upper-bound, it suffices to set 
  \[
    \Cr{rqmult}
    := \Cr{upperbound2TH1} \max\left\{1, \sum_{k \geq d+1} \Cr{upperbound2TH1}^{k} \, k^{-\frac{2}{d-1} k} \right\}  . 
  \]
  We assume now that $\varphi$ is well spread and prove the lower-bound for $r_{n+d+1}$.
  Theorem \ref{mainthm:LowerBoundfn} tells us that when $\varphi$ is well spread, for every $n\ge 0$,
  \[
    q_{n+d+1}
    > \Cr{lowerbound2TH2}^{n+d+1} \,  (n+d+1)^{- \frac{2(n+d+1)}{d-1}}
  \]
  Consequently, using Stirling's approximation $n^{-n}> e^{-n}(n!)^{-1}$ and the simple inequality $r_{n+d+1}>q_{n+d+1}$, we get 
  \begin{align*}
    r_{n+d+1}
    &> \left(\Cr{lowerbound2TH2} e^{-\frac{2}{d-1}}\right)^{n+d+1}[(n+d+1)!]^{-\frac{2}{d-1}}
    \\
    &> \left(\Cr{lowerbound2TH2} e^{-\frac{2}{d-1}}\right)^{n+d+1}[(n+d+1)^{d+1}\cdot n!]^{-\frac{2}{d-1}}
    \\  
    &> \left(\Cr{lowerbound2TH2} (d+1)^{-\frac{2}{d-1}}e^{-\frac{2}{d-1}}\right)^{n+d+1}(n!)^{-\frac{2}{d-1}}
  \end{align*}
  because $(n+d+1)^{d+1} <  (d+1)^{n+d+1}$ for $n+d+1 \geq d+1 \geq 3$.
  
  Taking $\Cr{const:LowerBound}=\Cr{lowerbound2TH2} (d+1)^{-\frac{2}{d-1}}e^{-\frac{2}{d-1}}\min(1,(\Cr{lowerbound2TH2} (d+1)^{-\frac{2}{d-1}}e^{-\frac{2}{d-1}})^{d+1})$, we get the required result. 
\end{proof}

The combination of the two previous lemmas implies that $\P(\Phi(Z)>a)$ is well approximated by subexponential power series of type $\sum_{n\geq 0} \frac{x^n}{(n!)^{\alpha}}$.
The next lemma, which is purely analytical, investigates the behaviour of such power series. 
\begin{lemma}
  \label{lem:upperboundExpModif}
  For any $\alpha>1$, we have  
  \[
    \exp \left( \frac12 \alpha x^{\frac 1\alpha} \right)
    < \sum_{n\geq d+1} \frac{x^n}{(n!)^{\alpha}} 
    < \sum_{n\geq 0} \frac{x^n}{(n!)^{\alpha}} 
    < \exp \left( \alpha x^{\frac 1\alpha} \right)  
  \] 
  where the first inequality holds for $x \geq (2(3d+5))^{\alpha} $ and the second for all $x>0$. 
\end{lemma}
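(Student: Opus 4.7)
My plan is to reduce everything to the substitution $y := x^{1/\alpha}$, under which the summands rewrite as $x^n/(n!)^\alpha = (y^n/n!)^\alpha$ and the three quantities to compare become $\sum_{n\geq 0}(y^n/n!)^\alpha$, $e^{\alpha y}$, and $e^{\alpha y/2}$. The middle inequality in the lemma is then immediate by positivity of the omitted terms, so the work concentrates on the outer two bounds.

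For the upper bound (valid for every $x>0$), I will invoke the elementary $\ell^p$ embedding $\|a\|_\alpha \leq \|a\|_1$ valid for $\alpha \geq 1$ and nonnegative sequences:
\[
\sum_{n} a_n^{\alpha} \leq \Bigl(\sum_n a_n\Bigr)^{\alpha}.
\]
The underlying pointwise fact is $(a+b)^\alpha \geq a^\alpha + b^\alpha$ for $a,b\geq 0$, obtained by dividing by $(a+b)^\alpha$ and using $p^\alpha \leq p$ on $[0,1]$; an induction on partial sums followed by monotone convergence extends it to the full series. Applied to $a_n = y^n/n!$, whose sum equals $e^y$, this directly yields $\sum(y^n/n!)^\alpha \leq e^{\alpha y}$. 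Strictness for $\alpha>1$ and $x>0$ comes for free from the fact that $a_0 = 1$ and $a_1 = y$ are both positive, so the pointwise inequality is strict on at least one pair of terms.

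For the lower bound I will keep a single well-chosen summand rather than the full series. Setting $N := \lceil y\rceil$, the hypothesis $x \geq (2(3d+5))^\alpha$ gives $y\geq 6d+10$, so in particular $N\geq d+1$ and the term $(y^N/N!)^\alpha$ does belong to the sum $\sum_{n\geq d+1}$. The upper Stirling bound $N! \leq \sqrt{2\pi N}\,(N/e)^N e^{1/(12N)}$, combined with $(y/N)^N \geq (1-1/N)^N \geq 1/4$ (valid since $N-1 < y \leq N$ and $N\geq 2$), will give
\[
\frac{y^N}{N!} \geq \frac{(y/N)^N\, e^N}{\sqrt{2\pi N}\,e^{1/(12N)}} \geq \frac{e^y}{C\sqrt{y}}
\]
for some absolute constant $C$. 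Raising to the power $\alpha$, the desired strict inequality $(y^N/N!)^\alpha > e^{\alpha y/2}$ reduces to $y/2 > \log C + \tfrac{1}{2}\log y$, which holds comfortably once $y\geq 10$.

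The only real obstacle will be the bookkeeping of the Stirling constants needed to guarantee that the explicit threshold $x\geq (2(3d+5))^\alpha$ is enough. The factor $3d$ in the threshold plays a dual role: it ensures $N = \lceil y\rceil \geq d+1$ so that the single retained term actually sits in the truncated sum, and it leaves ample room to absorb the $\log C + \tfrac{1}{2}\log y$ correction; no finer tuning of the argument should be necessary.
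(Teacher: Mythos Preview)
Your proposal is correct. The upper bound and the trivial middle inequality are handled exactly as in the paper: the substitution $y=x^{1/\alpha}$ followed by the $\ell^\alpha\hookrightarrow\ell^1$ embedding $\sum a_n^\alpha\le(\sum a_n)^\alpha$ applied to $a_n=y^n/n!$.

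For the lower bound, however, your route differs from the paper's. The paper keeps a \emph{block} of terms: it takes $I=(\lambda-\sqrt{2\lambda},\lambda+\sqrt{2\lambda})\cap\N$ with $\lambda=y$, applies H\"older in the form
\[
\sum_{n\in I}\Big(\frac{y^n}{n!}\Big)^\alpha\ge |I|^{1-\alpha}\Big(\sum_{n\in I}\frac{y^n}{n!}\Big)^\alpha,
\]
and then uses Chebyshev for a Poisson$(\lambda)$ variable to bound $\sum_{n\in I}e^{-\lambda}\lambda^n/n!\ge\tfrac12$, arriving at $8^{-\alpha}x^{-1/2}e^{\alpha y}$. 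You instead keep the \emph{single} term $N=\lceil y\rceil$ and control $y^N/N!$ directly via Stirling, getting $e^y/(C\sqrt{y})$ and hence $(y^N/N!)^\alpha\ge C^{-\alpha}y^{-\alpha/2}e^{\alpha y}$. Both approaches produce a bound of the shape $e^{\alpha y}$ divided by a factor polynomial in $y$ (of comparable strength), and both reduce the final step to checking that $y/2$ dominates $\log C+\tfrac12\log y$ on the stated range. Your argument is more elementary---it avoids H\"older and the Poisson tail estimate---while the paper's block argument is slightly more robust in that it does not rely on a sharp Stirling constant. Either way the threshold $y\ge 6d+10$ is comfortably sufficient, both to place $N\ge d+1$ in the truncated sum and to absorb the polynomial correction.
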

\begin{proof}
  The right hand side inequality follows immediately from the following simple computations.
  \[
    \sum_{n\geq d+1} \frac{x^n}{(n!)^{\alpha}} 
    <  \sum_{n\geq0} \left( \frac{(x^{\frac 1\alpha})^n}{n!} \right)^{\alpha}
    < \left( \sum_{n\geq0} \frac{(x^{\frac 1\alpha})^n}{n!} \right)^{\alpha}
    = \exp \left( \alpha x^{\frac 1\alpha} \right).
  \]
 For the left hand side inequality, H{\"o}lder's inequality gives for any finite $I \subset \N \setminus [d+1] $
  \begin{equation}
    \label{eq:Holder}
    \sum_{n\geq d+1} \left( \frac{(x^{\frac 1\alpha})^n}{n!} \right)^{\alpha} \geq
    \sum_{n\in I} \left( \frac{(x^{\frac 1\alpha})^n}{n!} \right)^{\alpha} 
    \geq |I|^{-(\alpha-1)} \left( \sum_{n \in I} \frac{(x^{\frac 1\alpha})^n}{n!} \right)^\alpha .
  \end{equation}

  For $Y$ a Poisson distributed random variable with mean $\lambda$ it is well known, and can be proved e.g. by Chebishev's inequality, that for $ I   = ( \lambda - \sqrt {2\lambda}, \lambda + \sqrt {2\lambda}) \cap \N$, we have
  \[
  \sum_{ n \in I } e^{-\lambda} \frac{\lambda^n}{n!} 
  = 1 - \P \left( |Y-\lambda| \geq \sqrt{2 \lambda } \right)
    \geq  \frac 12.
  \]
  $ I $ has at most $ 2 \sqrt{ 2 \lambda} + 1 < 4 \sqrt{\lambda} $ elements, when $\lambda \geq 1$. Putting this for 
  $\lambda = x^{1/\alpha} $ into \eqref{eq:Holder} yields
  \[
    \sum_{n\geq d+1} \left( \frac{(x^{\frac 1\alpha})^n}{n!} \right)^{\alpha} 
      \geq
    \left(4 x^{\frac 1{2\alpha}}\right)^{-(\alpha-1)} \left( e^{x^{\frac 1 \alpha}}\frac 12\right)^{\alpha} 
    \geq
    \left(8^{-\alpha}x^{-\frac 12} \right) e^{\alpha x^{ \frac 1 \alpha}} 
  \]
  as long as the condition $x^{1/\alpha} - \sqrt {2 x^{1/\alpha}} \geq d+1$ is fulfilled.
  Observe that $ x \geq (3d+5)^\alpha $ implies $x^{1/(2\alpha)} \geq \sqrt {d+2} +1$ which in turn implies $x^{1/\alpha} - 2 x^{1/(2\alpha)} +1 \geq d+2$ which gives the required condition.
  
  For $ t \geq 3$ we have $ 2 \ln 8 + t \leq 1 + t + t^2/2 \leq e^t $, or equivalently
  \[
    - \alpha \ln 8 - \frac 12 \ln x    
    \geq
    - \frac 12 \alpha x^{\frac 1\alpha} ,\ 
    \textrm{ i.e. }\ 
    8^{- \alpha}   x^{- \frac 12}    
    \geq
    e^{- \frac 12 \alpha x^{1/\alpha}} 
  \]
  for $ x^{1/{\alpha } } \geq e^3$.
  The inequality $2(3d+5) > e^3$ concludes the proof.
\end{proof}

\medskip
We are now ready to prove Theorem \ref{mainthm:DistribPhi}. Combining Lemma \ref{lem:SumRl} and the upper-bound of Lemma \ref{lem:ApproxRbyQ}, we get
\begin{equation*}
  \P ( \Phi(Z) > a ) 
  < e^{-\gamma a}  \sum_{n \geq 0} \Cr{rqmult}^{n} \frac{(\gamma a)^n}{(n!)^{\frac{d+1}{d-1}}} .
\end{equation*}
Applying now Lemma \ref{lem:upperboundExpModif} to $x=\Cr{rqmult} \gamma a$ and $\alpha=\frac{d+1}{d-1}$, we obtain that
\[
  \P ( \Phi(Z) > a )
  < e^{-\gamma a} \sum_{n \geq 0} \frac{(\Cr{rqmult} \gamma a)^{n} }{ (n!)^{\frac{d+1}{d-1}} }
  < \exp \left( -\gamma a  + \frac{d+1}{d-1}  (\Cr{rqmult}  \gamma a)^{\frac{d-1}{d+1}} \right).
\]
The proof of the lower-bound is nearly identical and we leave the details to the reader.

\subsection{Proof of Theorem \ref{thm:boundPhiFlat}}
Assume $1 \leq i < j \leq \lceil (d-1) / 2 \rceil $.
In the sequel, we use the notation $q_n^\e := \P \left( f(Z)=n, \ \frac{V_j(Z)^{\frac 1j}}{V_i(Z)^{\frac 1i}}< \epsilon\right)$ and $r_n^\e:=\sum_{k\ge n}q_k^\e$,  for every $n\ge (d+1)$ and $\e>0$.
The proof follows along the same lines as the upper bound of Theorem \ref{mainthm:DistribPhi} with minor adaptations.
Indeed, we need some analogues to the statements of Lemmas \ref{lem:SumRl} and \ref{lem:ApproxRbyQ} when $q_n$ is replaced by $q_n^\e$, i.e. when the extra-condition that $Z$  is $(\e \colon i,j)$-elongated is added. 

The lemma below is a rewriting of the joint distribution of $(\shape(Z),\Phi(Z))$ as a power series.
\begin{lemma}
  \label{eq:SumRlShape}
  For any measurable set of shapes $S\subset \KKcs$ and $a>0$, we have
  \begin{align*}
    \P (\shape(Z) \in S, \, \Phi(Z) > a )
    &= e^{-\gamma a} \sum_{k\geq d+1} \P (\shape(Z) \in S, \, f(Z) = k ) \sum _{l=0} ^{k-d-1} \frac { (\gamma a)^{l} } { l! }
    \\
    &= e^{-\gamma a} \sum_{l\geq0} \P (\shape(Z) \in S, \, f(Z) \geq l+d+1 ) \frac{(\gamma a)^l}{l!}.
  \end{align*}
\end{lemma}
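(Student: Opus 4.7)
The plan is to mimic the proof of Lemma \ref{lem:SumRl}, inserting the shape condition and invoking the Complementary Theorem to factor things out. First I would partition over the number of facets:
\[
  \P(\shape(Z)\in S,\,\Phi(Z)>a)
  = \sum_{k\geq d+1} \P\bigl(\shape(Z)\in S,\,\Phi(Z)>a,\,f(Z)=k\bigr).
\]
Then I would use part (2) of Theorem \ref{thm:ComplementaryThm}: conditionally on $f(Z)=k$, the shape $\shape(Z)$ and size $\Phi(Z)$ are independent, and $\Phi(Z)$ is $\Gamma_{\gamma,k-d}$-distributed. Hence
\[
  \P\bigl(\shape(Z)\in S,\,\Phi(Z)>a,\,f(Z)=k\bigr)
  = \P(\shape(Z)\in S,\,f(Z)=k)\int\limits_a^\infty e^{-\gamma t}\frac{\gamma^{k-d}t^{k-d-1}}{(k-d-1)!}\,\dint t.
\]

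Next I would compute the tail of the Gamma density by repeated integration by parts, exactly as in the proof of Lemma \ref{lem:SumRl}, obtaining
\[
  \int\limits_a^\infty e^{-\gamma t}\frac{\gamma^{k-d}t^{k-d-1}}{(k-d-1)!}\,\dint t
  = e^{-\gamma a}\sum_{l=0}^{k-d-1}\frac{(\gamma a)^l}{l!}.
\]
Substituting this back yields the first claimed identity.

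Finally, to obtain the second form I would switch the order of summation. Writing $\sum_{k\geq d+1}\sum_{l=0}^{k-d-1}$ as $\sum_{l\geq 0}\sum_{k\geq l+d+1}$ (legitimate since all summands are nonnegative), the inner sum becomes
\[
  \sum_{k\geq l+d+1}\P(\shape(Z)\in S,\,f(Z)=k) = \P(\shape(Z)\in S,\,f(Z)\geq l+d+1),
\]
which gives the second identity. I do not foresee any real obstacle here: the only points requiring care are invoking conditional independence from Theorem \ref{thm:ComplementaryThm} correctly, and justifying the interchange of summations, which is immediate by Tonelli since the terms are nonnegative.
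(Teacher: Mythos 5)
Your proof is correct and follows exactly the approach the paper intends: the paper omits the proof, stating only that it is ``fully analogous to that of Lemma \ref{lem:SumRl},'' and your argument reproduces precisely that — partitioning over $f(Z)=k$, invoking the conditional independence and Gamma-distribution facts from Theorem \ref{thm:ComplementaryThm}, evaluating the Gamma tail by iterated integration by parts, and swapping the order of summation by Tonelli.
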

The proof of this result is fully analogous to that of Lemma \ref{lem:SumRl} and is therefore omitted.

As in Lemma \ref{lem:ApproxRbyQ}, we require now an upper-bound for $r_{n+d+1}^\e$.
\begin{lemma}
  \label{lemma:boundsManyFacetsIsoperimeter}
  Assume $1 \leq i < j \leq \lceil (d-1) / 2 \rceil $.
  There exist constants $\Cr{eps<}$ and $\Cl[univ]{lemmaelong}$ depending on $\varphi$, such that for any $ \epsilon  < \Cr{eps<}  $ we have 
  \[
    r_{n+d+1}^\e  <   e^{ \Cr{thmelong1} \epsilon^{-2(d-1)} } 
    (\Cr{lemmaelong} \epsilon^{\frac 1{2d^4}})^{n} (n!)^{-\frac 2{d-1}} 
  \]
for $n \geq 0$.
\end{lemma}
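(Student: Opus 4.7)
The plan is to mimic the proof of the upper bound in Lemma \ref{lem:ApproxRbyQ}, but substituting Theorem \ref{thm:boundsManyFacetsIsoperimeter} for Theorem \ref{mainthm:UpperBoundfn}. Fix $\epsilon$ small enough that Theorem \ref{thm:boundsManyFacetsIsoperimeter} applies, and set $N_\epsilon := \lfloor \Cr{elong1} \epsilon^{-(d-2)} \rfloor^2$, the threshold above which that theorem provides a pointwise bound on $q_k^\epsilon$. The argument then splits into two regimes according to whether $n+d+1 > N_\epsilon$ or $n+d+1 \le N_\epsilon$.

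In the first regime, for every index $k \ge n+d+1 > N_\epsilon$ in the tail $r_{n+d+1}^\epsilon$, Theorem \ref{thm:boundsManyFacetsIsoperimeter} gives
\[
q_k^\epsilon < \frac{\gamma^d}{\IntMos}\, e^{\Cr{thmelong1} \epsilon^{-2(d-1)}} (\Cr{thmelong2}\epsilon^{\frac{1}{2d^4}})^k k^{-\frac{2k}{d-1}}.
\]
Reindexing as $k = n + \ell$ with $\ell \ge d+1$, and using the elementary inequality $(n+\ell)^{n+\ell} \ge n^n \ell^\ell$ (which follows by writing $(n+\ell)^{n+\ell} = (n+\ell)^n (n+\ell)^\ell$), I obtain $k^{-2k/(d-1)} \le n^{-2n/(d-1)} \ell^{-2\ell/(d-1)}$. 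Summing over $\ell \ge d+1$ produces a residual factor
\[
\sum_{\ell \ge d+1} (\Cr{thmelong2}\epsilon^{\frac{1}{2d^4}})^\ell \ell^{-\frac{2\ell}{d-1}}
\]
which is finite and, after shrinking $\epsilon$ further if necessary, is bounded by $1$. Finally, using $n^{-n} \le (n!)^{-1}$ to pass from $n^{-2n/(d-1)}$ to $(n!)^{-2/(d-1)}$, this yields the desired bound, with $\Cr{lemmaelong}$ chosen to absorb $\gamma^d/\IntMos$ and the residual series above.

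In the second regime, where $n+d+1 \le N_\epsilon$, Theorem \ref{thm:boundsManyFacetsIsoperimeter} does not apply to every summand, so I instead use the trivial estimate $r_{n+d+1}^\epsilon \le 1$. It then suffices to verify that
\[
1 \le e^{\Cr{thmelong1}\epsilon^{-2(d-1)}} (\Cr{lemmaelong}\epsilon^{\frac{1}{2d^4}})^n (n!)^{-\frac{2}{d-1}}
\]
for every $n \le N_\epsilon - d - 1$. Taking logarithms and using $\ln(n!) \le n \ln n$, the right-hand side of the logarithmic inequality has contributions of order $n\ln(1/\epsilon) + n\ln n$; since $n \le N_\epsilon \asymp \epsilon^{-2(d-2)}$, these contributions are uniformly $O(\epsilon^{-2(d-2)}\ln(1/\epsilon))$. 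On the other hand, the exponential prefactor contributes $\Cr{thmelong1}\epsilon^{-2(d-1)}$, which dominates as $\epsilon \to 0$ because $\epsilon^{-2(d-1)} = \epsilon^{-2}\cdot\epsilon^{-2(d-2)}$ and $\epsilon^{-2} \gg \ln(1/\epsilon)$. Hence for $\epsilon$ smaller than some threshold $\Cr{eps<}$, the inequality holds.

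The main obstacle is precisely this uniformity in $n$: Theorem \ref{thm:boundsManyFacetsIsoperimeter} only controls $q_k^\epsilon$ once $k$ exceeds the rather large threshold $N_\epsilon \sim \epsilon^{-2(d-2)}$, so the prefactor $e^{\Cr{thmelong1}\epsilon^{-2(d-1)}}$ must be powerful enough to dominate the trivial bound below $N_\epsilon$. The decisive feature is the gap between the exponents $2(d-1)$ in the exponential and $2(d-2)$ appearing implicitly through $N_\epsilon$, which leaves just enough room to absorb the polynomial-plus-factorial losses in the small-$n$ range. The constants $\Cr{eps<}$ and $\Cr{lemmaelong}$ are then fixed to satisfy both regimes simultaneously.
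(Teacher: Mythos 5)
Your proposal follows the same backbone as the paper's proof --- sum the tail $r_{n+d+1}^\e = \sum_{k \ge n+d+1} q_k^\e$ using the pointwise bound on $q_k^\e$ from Theorem~\ref{thm:boundsManyFacetsIsoperimeter}, factor out the $k=n$ contribution via $k^{-2k/(d-1)} \le n^{-2n/(d-1)} (k-n)^{-2(k-n)/(d-1)}$, and absorb the residual convergent series into the constant $\Cr{lemmaelong}$. Where you genuinely add value is in your explicit treatment of the range-of-validity issue: Theorem~\ref{thm:boundsManyFacetsIsoperimeter} only bounds $q_k^\e$ for $k > N_\e := \lfloor \Cr{elong1}\e^{-(d-2)}\rfloor^2$, so when $n$ is small the sum contains terms for which the theorem is silent, yet the paper applies the bound uniformly over $k \ge n+d+1$ without comment. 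Your two-regime split --- handling $n + d + 1 \le N_\e$ by the trivial bound $r_{n+d+1}^\e \le 1$ and checking that the prefactor $e^{\Cr{thmelong1}\e^{-2(d-1)}}$ beats the losses of order $\e^{-2(d-2)}\ln(1/\e)$ coming from $n \le N_\e$ --- makes explicit precisely why the paper's uniform use of the estimate is ultimately harmless. The exponent comparison you highlight ($\e^{-2(d-1)}$ versus $\e^{-2(d-2)}\ln(1/\e)$) is indeed the decisive point, and your argument is correct; it supplies a justification the paper omits. One minor remark: in the first regime you should note that $\e$ must be replaced by the worst-case value $\Cr{eps<}$ inside the residual series before declaring its convergence independent of $\e$ (as the paper does), so that $\Cr{lemmaelong}$ can be fixed once and for all --- otherwise the constant would depend on $\e$.
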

\begin{proof}
  Theorem \ref{thm:boundsManyFacetsIsoperimeter} implies that for any  $ \epsilon  < \Cr{elong1}^{2/(d-1)}\Cr{elong2}^{-1} (c_{\Phi})^{-1}   $ and $n > \lfloor \Cr{elong1} \epsilon^{-(d-2)} \rfloor^2 $, 
  \[
    q_n^\e <   \frac{ \gamma^d }{ \IntMos } 
    \ e^{ \Cr{thmelong1} \epsilon^{-2(d-1)}} (\Cr{thmelong2}\epsilon^{\frac 1{2d^4}})^n \  n^{-\frac{2n}{d-1}}  .
  \]
  For $\epsilon <\Cr{eps<}$ we have 
  \begin{align*}
    r_{n+d+1}^\e 
    &\leq
    \frac{ \gamma^d }{ \IntMos }   \ e^{ \Cr{thmelong1} \epsilon^{-2(d-1)}} 
    \sum_{k \geq n+d+1} \left(\Cr{thmelong2}\epsilon^{\frac 1{2d^4}}\right)^k \  k^{-\frac 2{d-1} k}  
    \\ &\leq
    \frac{ \gamma^d }{ \IntMos }   \ e^{ \Cr{thmelong1} \epsilon^{-2(d-1)}} 
    (\Cr{thmelong2}\epsilon^{\frac 1{2d^4}})^{n} \  n^{-\frac{2n}{d-1}}  
    \sum_{k \geq d+1} \left(\Cr{thmelong2}\Cr{eps<}^{\frac 1{2d^4}} \right)^{k} \  k^{-\frac 2{d-1} k}  .
  \end{align*}
  We use $n^{-n} \leq (n!)^{-1}$, and observe that the remaining sum is convergent and independent of $n$.
  Hence it suffices to set 
  \[
    \Cr{lemmaelong}
    := \Cr{thmelong2} \max\left\{1, \sum_{k \geq d+1} \left(\Cr{thmelong2}\Cr{eps<}^{\frac 1{2d^4}} \right)^{k} \, k^{-\frac{2}{d-1} k} \right\}  . 
  \]
\end{proof}

Let us now proceed with the proof of Theorem \ref{thm:boundPhiFlat}.
Applying Lemma \ref{eq:SumRlShape} to the set $ S =\{ K \in \KKcs \colon V_j(Z)^{1/j} V_i(Z)^{-1/i} < \epsilon \} $, we get
\begin{align*}
  \P \left( \Phi(Z) > a ,\ \frac{V_j(Z)^{\frac 1j}}{V_i(Z)^{\frac 1i}} < \epsilon \right)
  &= e^{-\gamma a} \sum _{ n \geq 0 } r^\e_{n+d+1} \frac { (\gamma a)^{n} } { n! }. 
\end{align*}
We combine this with Lemma \ref{lemma:boundsManyFacetsIsoperimeter} to deduce
\begin{align*}
  \P \left( \Phi(Z) > a ,\ \frac{V_j(Z)^{\frac 1j}}{V_i(Z)^{\frac 1i}} < \epsilon \right)
  & \leq 
  e^{-\gamma a+ \Cr{thmelong1} \epsilon^{-2(d-1)} } 
  \sum _{ n \geq 0 } (\Cr{lemmaelong} \epsilon^{\frac 1{2d^4}})^{n} \frac { (\gamma a)^{n} } { (n!)^{\frac {d+1}{d-1}} } \, .  
\end{align*}
Lemma \ref{lem:upperboundExpModif} ends the proof:
\begin{align*}
  \P \left( \Phi(Z) > a ,\ \frac{V_j(Z)^{\frac 1j}}{V_i(Z)^{\frac 1i}} < \epsilon \right)
  & \leq
  \exp \left( -\gamma a+  \frac {d+1}{d-1} (\Cr{lemmaelong} \epsilon^{\frac 1{2d^4}} \gamma a)^{\frac {d-1}{d+1}}  + \Cr{thmelong1} \epsilon^{-2(d-1)} \right)  
  \\  & \leq
  \exp \left( -\gamma a+ \Cr{lemmaelong2} \epsilon^{\frac {1}{6d^4}} (\gamma a)^{\frac {d-1}{d+1}}  \right)  
\end{align*}
for $\e^{- (2d+3)}\leq \gamma a$ because this implies $ \e^{-2(d+1)} \leq \e \gamma a \leq \e^{\frac{1}{2d^4}} \gamma a $ and thus $ \e^{-2(d-1)} \leq (\e^{\frac{1}{2d^4}} \gamma a)^{\frac{d-1}{d+1}} \leq \e^{\frac{1}{6d^4}} ( \gamma a)^{\frac{d-1}{d+1}} $ since $\frac{d-1}{d+1}\geq \frac13$.

\subsection{Proof of Theorem \ref{mainthm:nmaxshape}}

Assume $\varphi$ is concentrated on a finite number $n_{\max}$ of points.
Thus $f(Z) \leq n_{\max}$ with probability one. 
We use again the notation $q_n = \P( f(Z)=n)$, and fix some subset $S \subset \KKcs$ of the shape space such that $\P (\shape(Z) \in S, \, f(Z) = n_{\max} )  >0$.
Because of Lemma \ref{eq:SumRlShape}, we have
\begin{align*}
  \P (\shape(Z) \in S, \, \Phi(Z) > a )
  &= 
  e^{-\gamma a} \sum _{ k \leq n_{\max} } \P (\shape(Z) \in S, \, f(Z) = k ) \sum _{l=0} ^{k-d-1} \frac { (\gamma a)^{l} } { l! }
  \\ &=
  e^{-\gamma a} \P (\shape(Z) \in S, \, f(Z) = n_{\max} )  \sum _{l=0} ^{n_{\max}-d-1} \frac { (\gamma a)^{l} } { l! } \  (1+ O ((\gamma a)^{-1}))
  \\ &=
  \P(\shape(Z) \in S, \, \Phi(Z)\geq a, \ f(Z)=n_{\max})(1+O((\gamma a)^{-1}))
\end{align*}
This implies
\begin{align*}
  \P(\shape(Z) \in S | \Phi(Z)\geq a) 
  &=
  \frac{ \P(\shape(Z) \in S,\  \Phi(Z)\geq a)}{\P(\Phi(Z)\geq a) } 
  \\&=
  \frac{ \P(\shape(Z) \in S,\  \Phi(Z)\geq a, \ f(Z)=n_{\max}) (1+O(\gamma a^{-1}))}{\P(\Phi(Z)\geq a , \ f(Z)=n_{\max}) (1+O((\gamma a)^{-1}))} 
  \\&=
  \frac{ \P(\shape(Z) \in S,\  \Phi(Z)\geq a \mid \ f(Z)=n_{\max}) }{\P(\Phi(Z)\geq a \mid \ f(Z)=n_{\max}) } (1+O((\gamma a)^{-1}))
  \\&=
  \P(\shape(Z) \in S | f(Z)=n_{\max})(1+ O((\gamma a)^{-1}))
\end{align*}
where in the last equation we used again the Complementary Theorem \ref{thm:ComplementaryThm}.

\end{document}